\newcommand{\ccc}{{\mathbf C}}
\newcommand{\nnn}{{\mathbf N}}
\newcommand{\rrr}{{\mathbf R}}
\newcommand{\zzz}{{\mathbf Z}}
\newtheorem{prop}{Proposition}[section]
\newtheorem{lemma}{Lemma}[section]
\newtheorem{conj}{Conjecture}[section]
\newtheorem{note}{Note}[section]
\numberwithin{equation}{section}
\begin{document}

\title{
Mock theta functions and characters of N=3 \\ superconformal modules III}

\author{\footnote{12-4 Karato-Rokkoudai, Kita-ku, Kobe 651-1334, Japan, 
\hspace{5mm}
wakimoto.minoru.314@m.kyushu-u.ac.jp, \quad wakimoto@r6.dion.ne.jp
}{ Minoru Wakimoto}}

\date{\empty}

\maketitle

\begin{center}
Abstract
\end{center}

In this paper we study the branching functions of tensor products 
of N=3 superconformal modules. 

\tableofcontents

\section{Introduction}

It is known in \cite{W2022a} and \cite{W2022b} that the characters of 
N=3 superconformal modules are written by Jacobi's $\theta$-functions and
Mumfold's $\vartheta$-functions and Dedekind's $\eta$-function. 
Also the mutiplication formula for classical theta functions is known 
in \cite{K1}.
Then it is naturally expected that these will enable us to compute 
products of N=3 characters, namely the branching functions of 
tensor product representations of the N=3 superconformal algebra. 
This paper is an approach to this problem.

For notations and definitions, we follow from \cite{W2022a}, 
\cite{W2022b} and \cite{W2022c}.

In section \ref{sec:simplest:cases}, we compute explicitly the branching functions 
of the tensor product representation $H(\Lambda^{[K(m), m_2]}) \otimes 
H(\Lambda^{[K(m'), m_2']})$ in the case when $m=m'=1$ and $m=m'=2$.

The characters of N=3 superconformal modules are obtained in \cite{W2022b}
but we need to have more information about them for our study on the 
branching of tensor product representations. 
For this sake, in section \ref{sec:n3:numerators}, 
we make further analysis on the numerators of N=3 characters.
Making use of the results obtained in section \ref{sec:n3:numerators},
we discuss about the products of N=3 characters in section 
\ref{sec:space:n3characters}.

\section{Preliminaries $\sim$ multiplication formula of theta functions}
\label{sec:preliminaries}

Multipication formula of classical theta functions in Proposition 13.2 
of \cite{K1} gives the following formula for the Jacobi's theta functions. 
\begin{lemma} 
\label{n3:lemma:2022-701a}
Let $m,n \in \nnn$ and $j, k \in \frac12 \zzz$. Then 
\begin{equation}
\theta_{j,n}(\tau,z)\theta_{k,m}(\tau,z)
=
\sum_{r \in \zzz/(m+n)\zzz}
\theta_{2mnr+kn-jm, mn(m+n)}(\tau,0) \, 
\theta_{j+k+2mr,m+n}(\tau,z)
\label{n3:eqn:2022-701a}
\end{equation}
\end{lemma}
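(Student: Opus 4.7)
The plan is to specialize Proposition 13.2 of \cite{K1} --- the multiplication formula for classical theta functions --- to the normalization
\[
\theta_{j,m}(\tau,z) \;=\; \sum_{\gamma \in \zzz + \frac{j}{2m}} q^{m\gamma^{2}}\, e^{2\pi i m \gamma z}, \qquad q = e^{2\pi i \tau},
\]
and then read off \eqref{n3:eqn:2022-701a}. First I expand the left-hand side as a double series in $\gamma_{1} \in \zzz + j/(2n)$ and $\gamma_{2} \in \zzz + k/(2m)$, whose summand is $q^{n\gamma_{1}^{2} + m\gamma_{2}^{2}}\, e^{2\pi i (n\gamma_{1} + m\gamma_{2}) z}$.

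The decisive step is the orthogonal change of variables
\[
\gamma_{1} \;=\; \alpha + m\beta, \qquad \gamma_{2} \;=\; \alpha - n\beta,
\qquad\text{i.e.}\qquad
\alpha = \tfrac{n\gamma_{1}+m\gamma_{2}}{m+n},\quad \beta = \tfrac{\gamma_{1}-\gamma_{2}}{m+n},
\]
which diagonalizes the $q$-exponent and linearizes the $z$-term:
\[
n\gamma_{1}^{2} + m\gamma_{2}^{2} \;=\; (m+n)\alpha^{2} + mn(m+n)\beta^{2},
\qquad
n\gamma_{1} + m\gamma_{2} \;=\; (m+n)\alpha.
\]
The summand therefore factors as $\bigl(q^{(m+n)\alpha^{2}}\, e^{2\pi i(m+n)\alpha z}\bigr)\cdot q^{mn(m+n)\beta^{2}}$, which is precisely the shape of the right-hand side of \eqref{n3:eqn:2022-701a}.

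The rest is bookkeeping on the index lattice. Writing $\gamma_{1} = a + j/(2n)$, $\gamma_{2} = b + k/(2m)$ with $a,b \in \zzz$ and setting $(A,B) := (na+mb,\, a-b)$, the map $(a,b) \mapsto (A,B)$ has determinant $-(m+n)$ and its image is the sublattice $\{(A,B) \in \zzz^{2} : A + mB \equiv 0 \pmod{m+n}\}$. Partitioning the sum by the residue $r := B \bmod (m+n) \in \zzz/(m+n)\zzz$ and substituting $B = r + (m+n)\ell$, $A = -mr + (m+n)s$ with $s,\ell \in \zzz$, one checks that $(r,s,\ell)$ parameterizes $(a,b)$ bijectively, and the fractional parts of $\alpha,\beta$ become
\[
\alpha \in \zzz + \frac{j+k-2mr}{2(m+n)},
\qquad
\beta \in \zzz + \frac{2mnr + jm - kn}{2mn(m+n)}.
\]
The $\alpha$-sum is then $\theta_{j+k-2mr,\,m+n}(\tau,z)$ and the $\beta$-sum (at $z=0$) is $\theta_{2mnr+jm-kn,\,mn(m+n)}(\tau,0)$. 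A final reindexing $r \mapsto -r$, together with $\theta_{-\mu,\nu}(\tau,0)=\theta_{\mu,\nu}(\tau,0)$ applied to the $z$-independent factor, converts these into $\theta_{j+k+2mr,\,m+n}(\tau,z)$ and $\theta_{2mnr+kn-jm,\,mn(m+n)}(\tau,0)$, yielding \eqref{n3:eqn:2022-701a}.

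The main obstacle is exactly this last bookkeeping: tracking the half-integer shifts $j/(2n)$ and $k/(2m)$ consistently through the linear change of variables, verifying bijectivity of the $(a,b) \leftrightarrow (r,s,\ell)$ parameterization, and handling the sign convention cleanly at $z=0$. No analytic subtleties arise, since everything is a rearrangement of an absolutely convergent double series on the upper half plane.
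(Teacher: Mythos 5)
Your derivation is correct and follows exactly the route the paper takes: the paper simply cites Proposition 13.2 of \cite{K1}, and your change of variables $(\gamma_1,\gamma_2)\mapsto(\alpha,\beta)$ with the residue decomposition of the index lattice is precisely the content of that proposition specialized to the normalization $\theta_{j,m}(\tau,z)=\sum_{\gamma\in\zzz+j/(2m)}q^{m\gamma^2}e^{2\pi im\gamma z}$. The only difference is that you carry out in full the bookkeeping (including the final $r\mapsto -r$ reindexing and the evenness of $\theta_{\cdot,\nu}(\tau,0)$ in its index) that the paper leaves to the reference.
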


In particular for $n=1$ and $n=2$, from this Lemma and Lemma 1.2 
in \cite{W2022c}, we obtain the following:
\begin{subequations}
\begin{equation} \left\{
\begin{array}{ccl}
\theta_{0,1}(\tau, z) \theta_{k,m}(\tau,z) 
&=& 
\sum\limits_{r \in \zzz/(m+1)\zzz} 
\theta_{k+2r,m+1}(\tau,z) \, 
\theta_{k-2mr, m(m+1)}(\tau,0) 
\\[5mm]
\theta_{1,1}(\tau, z) \theta_{k,m}(\tau,z) 
&=& 
\sum\limits_{r \in \zzz/(m+1)\zzz} 
\theta_{k+2r+1,m+1}(\tau,z) \, 
\theta_{k-(2r+1)m, m(m+1)}(\tau,0)
\end{array}\right.
\label{n3:eqn:2022-701b}
\end{equation}
\begin{equation} \left\{
\begin{array}{rcl}
\theta_{0,2}(\tau, z) \theta_{k,m}(\tau,z) 
&=& 
\sum\limits_{r \in \zzz/(m+2)\zzz} 
\theta_{k+4r,m+2}(\tau,z) \,\ \theta_{2k-4mr, 2m(m+2)}(\tau,0)
\\[5mm]
\theta_{2,2}(\tau, z) \theta_{k,m}(\tau,z) 
&=& 
\sum\limits_{r \in \zzz/(m+2)\zzz} 
\theta_{k+2+4r,m+2}(\tau,z) \,\ 
\theta_{2k-2m-4mr, 2m(m+2)}(\tau,0)
\\[5mm]
\theta_{1,2}(\tau, z) \theta_{k,m}(\tau,z) 
&=& 
\sum\limits_{r \in \zzz/(m+2)\zzz} 
\theta_{k+1+4r,m+2}(\tau,z) \,\ 
\theta_{2k-m-4mr, 2m(m+2)}(\tau,0)
\\[5mm]
\theta_{-1,2}(\tau, z) \theta_{k,m}(\tau,z) 
&=& 
\sum\limits_{r \in \zzz/(m+2)\zzz} 
\theta_{k-1+4r,m+2}(\tau,z) \,\ 
\theta_{2k+m-4mr, 2m(m+2)}(\tau,0)
\end{array} \right.
\label{n3:eqn:2022-701c}
\end{equation}
From \eqref{n3:eqn:2022-701c}, it is easy to see the following:
\begin{equation}
\big[\theta_{1,2}+\theta_{-1,2}\big](\tau,z) \, 
\theta_{k,m}(\tau,z) 
= \hspace{-3mm} 
\sum_{r \in \zzz/2(m+2)\zzz} \hspace{-3mm} 
\theta_{k-1+2r,m+2}(\tau,z) \, 
\theta_{2k+m-2mr, 2m(m+2)}(\tau,0)
\label{n3:eqn:2022-701d}
\end{equation}
\end{subequations}

\medskip

The Mumford's theta functions $\vartheta_{ab}(\tau,z)$ (cf.\cite{Mum}) 
are related to the Jacobi's theta functions by the formulas:
\begin{equation}\left\{
\begin{array}{ccc}
\vartheta_{00} &=& \theta_{0,2}+\theta_{2,2} \\[1mm]
\vartheta_{01} &=& \theta_{0,2}-\theta_{2,2}
\end{array}\right. \hspace{10mm} \left\{
\begin{array}{ccr}
\vartheta_{10} &=& \theta_{1,2}+\theta_{-1,2} \\[1mm]
\vartheta_{11} &=& i \, (\theta_{1,2}-\theta_{-1,2})
\end{array}\right.
\label{n3:eqn:2022-704b}
\end{equation}
Then, by using \eqref{n3:eqn:2022-701c}, we obtain the multiplication formulas 
for the Mumford's theta functions as follows:

\begin{lemma} \,\ 
\label{lemma:2022-616a}
\begin{enumerate}
\item[{\rm 1)}] \, 
 $\vartheta_{00}(\tau,z)\vartheta_{00}(\tau,z)
= \, 
\eta(2\tau) \bigg[\dfrac{\eta(2\tau)^2}{\eta(\tau)\eta(4\tau)}\bigg]^2 
\vartheta_{00} (2\tau, 2z)
\, + \, 
2 \, \eta(2\tau) \bigg[\dfrac{\eta(4\tau)}{\eta(2\tau)}\bigg]^2 
\vartheta_{10} (2\tau, 2z)$
\item[{\rm 2)}] \, $\vartheta_{00}(\tau,z)\vartheta_{01}(\tau,z)
\,\ = \,\ 
\eta(2\tau) \, \bigg[\dfrac{\eta(\tau)}{\eta(2\tau)}\bigg]^2 \, 
\vartheta_{01} (2\tau, 2z)$
\item[{\rm 3)}] \, $\vartheta_{01}(\tau,z)\vartheta_{01}(\tau,z)
= \, 
\eta(2\tau) \bigg[\dfrac{\eta(2\tau)^2}{\eta(\tau)\eta(4\tau)}\bigg]^2 
\vartheta_{00} (2\tau, 2z)
\, - \, 
2 \, \eta(2\tau) \bigg[\dfrac{\eta(4\tau)}{\eta(2\tau)}\bigg]^2 
\vartheta_{10} (2\tau, 2z)$
\end{enumerate}
\end{lemma}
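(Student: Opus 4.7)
The strategy is to unfold each Mumford theta into a pair of Jacobi thetas via \eqref{n3:eqn:2022-704b}, apply the multiplication formula \eqref{n3:eqn:2022-701c} with $m=2$, and then reassemble using the elementary duplication identity $\theta_{j,4}(\tau,z) = \theta_{j/2,2}(2\tau,2z)$, which is a direct consequence of the series definition (and is the content of Lemma~1.2 of \cite{W2022c}). Expanding $\vartheta_{00}^2 = \theta_{0,2}^2 + 2\theta_{0,2}\theta_{2,2} + \theta_{2,2}^2$, $\vartheta_{01}^2 = \theta_{0,2}^2 - 2\theta_{0,2}\theta_{2,2} + \theta_{2,2}^2$, and $\vartheta_{00}\vartheta_{01} = \theta_{0,2}^2 - \theta_{2,2}^2$ reduces the task to computing the three products $\theta_{a,2}\theta_{b,2}$ with $a,b \in \{0,2\}$.

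For each such product I apply the relevant case of \eqref{n3:eqn:2022-701c} with $k \in \{0,2\}$ and $m=2$, producing a sum over $r \in \zzz/4\zzz$ of terms $\theta_{*,4}(\tau,z)\,\theta_{*,16}(\tau,0)$. Using the periodicity $\theta_{j+8,4} = \theta_{j,4}$ in the $z$-dependent factor and the evenness $\theta_{-j,16}(\tau,0) = \theta_{j,16}(\tau,0)$ of the constant factor, the four $r$-terms collapse to two, giving for example $\theta_{0,2}^2 = \theta_{0,4}(\theta_{0,16}+\theta_{16,16}) + 2\theta_{4,4}\theta_{8,16}$ and analogous formulas for $\theta_{2,2}^2$ and $\theta_{0,2}\theta_{2,2}$. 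Substituting $\theta_{j,4}(\tau,z) = \theta_{j/2,2}(2\tau,2z)$ then turns $\theta_{0,4}\pm\theta_{4,4}$ into $\vartheta_{00}(2\tau,2z)$ or $\vartheta_{01}(2\tau,2z)$ and $\theta_{2,4}+\theta_{-2,4}$ into $\vartheta_{10}(2\tau,2z)$, so each of the three claimed identities emerges in exactly the shape stated, with a scalar prefactor built from a specific combination of $\theta_{j,16}(\tau,0)$.

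It remains to identify those prefactors with the stated $\eta$-quotients. Specializing the three assembled identities at $z=0$ and comparing with the classical duplication formulas $\vartheta_{00}^2 + \vartheta_{01}^2 = 2\vartheta_{00}(2\tau)^2$, $\vartheta_{00}^2 - \vartheta_{01}^2 = 2\vartheta_{10}(2\tau)^2$, and $\vartheta_{00}\vartheta_{01} = \vartheta_{01}(2\tau)^2$ pins down the three prefactors as $\vartheta_{00}(2\tau,0)$, $\tfrac12\vartheta_{10}(2\tau,0)$, and $\vartheta_{01}(2\tau,0)$, respectively. The standard Jacobi $\eta$-product formulas $\vartheta_{00}(\tau,0) = \eta(\tau)^5/(\eta(\tau/2)^2\eta(2\tau)^2)$, $\vartheta_{10}(\tau,0) = 2\eta(2\tau)^2/\eta(\tau)$, and $\vartheta_{01}(\tau,0) = \eta(\tau/2)^2/\eta(\tau)$, applied at argument $2\tau$, then deliver precisely the $\eta$-quotients appearing in the statement. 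The main obstacle is the bookkeeping in the middle step -- keeping straight the four $r$-terms per product and their collapses under periodicity and evenness -- rather than any deep difficulty; the final $\eta$-identification is routine once the classical duplication identities are invoked.
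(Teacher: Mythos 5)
Your proposal is correct and follows essentially the route the paper intends: the paper gives no detailed proof of Lemma \ref{lemma:2022-616a}, simply asserting that it follows from the $n=2$ multiplication formulas \eqref{n3:eqn:2022-701c} applied to the decompositions \eqref{n3:eqn:2022-704b}, which is exactly what you carry out. Your extra steps (collapsing the $r$-sum via periodicity and evenness, rescaling $\theta_{j,4}(\tau,z)=\theta_{j/2,2}(2\tau,2z)$, and pinning the $z$-independent prefactors down at $z=0$ against the classical duplication identities and the $\eta$-product formulas) all check out.
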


From Lemma \ref{lemma:2022-616a}, we immediately have the following:

\begin{note} \,\ 
\label{note:2022-618b}
\begin{enumerate}
\item[{\rm 1)}] 
\begin{enumerate}
\item[{\rm (i)}] \,\ $
\dfrac{\vartheta_{10}(\tau,z)}{\vartheta_{01}(\tau,z)} \, \bigg\{
\dfrac{\eta(2\tau)^5}{\eta(\tau)^2\eta(4\tau)^2} \, \vartheta_{00}(2\tau,2z)
\, - \, 
2 \, \dfrac{\eta(4\tau)^2}{\eta(2\tau)} \, \vartheta_{10}(2\tau,2z)\bigg\}
\, = \, 
\vartheta_{01}(\tau,z)\vartheta_{10}(\tau,z)$
\item[{\rm (ii)}] \,\ $
\dfrac{\vartheta_{10}(\tau,z)}{\vartheta_{00}(\tau,z)} \, \bigg\{
\dfrac{\eta(2\tau)^5}{\eta(\tau)^2\eta(4\tau)^2} \, \vartheta_{00}(2\tau,2z)
\, + \, 
2 \, \dfrac{\eta(4\tau)^2}{\eta(2\tau)} \, \vartheta_{10}(2\tau,2z)\bigg\}
\, = \, 
\vartheta_{00}(\tau,z)\vartheta_{10}(\tau,z)$
\end{enumerate}
\item[{\rm 2)}] 
\begin{enumerate}
\item[{\rm (i)}] \,\ $
\dfrac{\vartheta_{10}(\tau,z)}{\vartheta_{01}(\tau,z)} \, 
\vartheta_{01}(2\tau,2z)
\,\ = \,\ 
\dfrac{\eta(2\tau)}{\eta(\tau)^2} \, 
\vartheta_{00}(\tau,z)\vartheta_{10}(\tau,z)$
\item[{\rm (ii)}] \,\ 
$\dfrac{\vartheta_{10}(\tau,z)}{\vartheta_{00}(\tau,z)} \, 
\vartheta_{01}(2\tau,2z)
\,\ = \,\ 
\dfrac{\eta(2\tau)}{\eta(\tau)^2} \, 
\vartheta_{01}(\tau,z)\vartheta_{10}(\tau,z)$
\end{enumerate}
\end{enumerate}
\end{note}

It is easy to show the following formulas by calculation using 
\eqref{n3:eqn:2022-701b}:

\begin{note} \,\ 
\label{n3:note:2022-704a}
\label{ex:2022-609a}
\begin{enumerate}
\item[{\rm 1)}] \,\ $\theta_{0,1}(\tau,z)^2 \,\ = \,\ 
\dfrac12 \, \eta(\tau) \, \bigg\{
\bigg[
\dfrac{\eta(\tau)^2}{\eta(\frac{\tau}{2})\eta(2\tau)}\bigg]^2 \, 
\vartheta_{00}(\tau,z)
\, + \, 
\bigg[\dfrac{\eta(\frac{\tau}{2})}{\eta(\tau)}\bigg]^2 \, 
\vartheta_{01}(\tau,z)\bigg\} $
\item[{\rm 2)}] \,\ $\theta_{1,1}(\tau,z)^2 \,\ = \,\ 
\dfrac12 \, \eta(\tau) \, \bigg\{
\bigg[
\dfrac{\eta(\tau)^2}{\eta(\frac{\tau}{2})\eta(2\tau)}\bigg]^2 \, 
\vartheta_{00}(\tau,z)
\, - \, 
\bigg[\dfrac{\eta(\frac{\tau}{2})}{\eta(\tau)}\bigg]^2 \, 
\vartheta_{01}(\tau,z)\bigg\} $

\item[{\rm 3)}] \,\ $\theta_{0,1}(\tau,z) \, \theta_{1,1}(\tau,z) 
\,\ = \,\ 
\dfrac{\eta(2\tau)^2}{\eta(\tau)} \, \vartheta_{10}(\tau,z)$
\end{enumerate}
\end{note}

Also the following formulas are obtained by easy calculation 
using Lemma 1.1 in \cite{W2022c}:

\begin{note} \,\ 
\label{note:2022-626a}
Let $m \in \nnn$ and $p \in \zzz$, then
\begin{enumerate}
\item[{\rm 1)}]
\begin{enumerate}
\item[{\rm (i)}] \,\ $
\theta_{0.m+1}\Big(\tau, \, -\dfrac12+\dfrac{m(4p+1)}{2(m+1)}\tau\Big)
\,\ = \,\ 
q^{-\frac{m^2}{m+1}(p+\frac14)^2} \, 
e^{\pi im(p+\frac14)} \, 
\theta_{m(2p+\frac12), \, m+1}(\tau, \, -\frac12)$
$$
= \,\ \left\{
\begin{array}{ccl}
q^{-\frac{m^2}{m+1}(p+\frac14)^2} \, 
\theta_{m(2p+\frac12), \, m+1}^{(-)}(\tau, \, 0)
& & {\rm if} \,\ m \, \in \, 2 \, \nnn \\[2mm]
q^{-\frac{m^2}{m+1}(p+\frac14)^2} \, 
\theta_{m(2p+\frac12), \, m+1}^{(+)}(\tau, \, 0)
& & {\rm if} \,\ m \, \in \, \nnn_{\rm odd}
\end{array}\right.
$$
\item[{\rm (ii)}] \,\ $
\theta_{0.m+1}\Big(\tau, \, -\dfrac12+\dfrac{m(4p-1)}{2(m+1)}\tau\Big)
\,\ = \,\ 
q^{-\frac{m^2}{m+1}(p-\frac14)^2} \, 
e^{\pi im(p-\frac14)} \, 
\theta_{m(2p-\frac12), \, m+1}(\tau, \, -\frac12)$
$$
= \,\ \left\{
\begin{array}{ccl}
q^{-\frac{m^2}{m+1}(p-\frac14)^2} \, 
\theta_{m(2p-\frac12), \, m+1}^{(-)}(\tau, \, 0)
& & {\rm if} \,\ m \, \in \, 2 \, \nnn \\[2mm]
q^{-\frac{m^2}{m+1}(p-\frac14)^2} \, 
\theta_{m(2p-\frac12), \, m+1}^{(+)}(\tau, \, 0)
& & {\rm if} \,\ m \, \in \, \nnn_{\rm odd}
\end{array}\right.
$$
\end{enumerate}
\item[{\rm 2)}]
\begin{enumerate}
\item[{\rm (i)}] \quad $\theta_{0.m+1}\Big(\tau, \, \dfrac{4p+1}{2(m+1)}\tau+z\Big)
\,\ = \,\ 
q^{-\frac{1}{16} \cdot \frac{(4p+1)^2}{m+1}} \, 
e^{-\frac{\pi i}{2}(4p+1)z} \, \theta_{2p+\frac12, \, m+1}(\tau, z)$
\item[{\rm (ii)}] \quad $\theta_{0.m+1}\Big(\tau, \, \dfrac{4p+1}{2(m+1)}\tau-z\Big)
\,\ = \,\ 
q^{-\frac{1}{16} \cdot \frac{(4p+1)^2}{m+1}} \, 
e^{\frac{\pi i}{2}(4p+1)z} \, \theta_{-2p-\frac12, \, m+1}(\tau, z)$
\end{enumerate}
\item[{\rm 3)}]
\begin{enumerate}
\item[{\rm (i)}] \quad $\theta_{0.m+1}\Big(\tau, \, \dfrac{4p-1}{2(m+1)}\tau+z\Big)
\,\ = \,\ 
q^{-\frac{1}{16} \cdot \frac{(4p-1)^2}{m+1}} \, 
e^{-\frac{\pi i}{2}(4p-1)z} \, \theta_{2p-\frac12, \, m+1}(\tau, z)$
\item[{\rm (ii)}] \quad $\theta_{0.m+1}\Big(\tau, \, \dfrac{4p-1}{2(m+1)}\tau-z\Big)
\,\ = \,\ 
q^{-\frac{1}{16} \cdot \frac{(4p-1)^2}{m+1}} \, 
e^{\frac{\pi i}{2}(4p-1)z} \, \theta_{-2p+\frac12, \, m+1}(\tau, z)$
\end{enumerate}
\item[{\rm 4)}]
\begin{enumerate}
\item[{\rm (i)}] \quad $\theta_{0.m+1}
\Big(\tau, \, \dfrac{4p-1}{2(m+1)}\tau+z+\tau\Big)$
$$
= \,\ 
q^{-\frac{1}{m+1}(p-\tfrac14+\tfrac{m+1}{2})^2} \, 
e^{-2\pi i (p-\frac14+\frac{m+1}{2})z} \, 
\theta_{2p-\frac12+m+1, \, m+1}(\tau, z)
$$
\item[{\rm (ii)}] \quad $\theta_{0.m+1}
\Big(\tau, \, \dfrac{4p-1}{2(m+1)}\tau-z-\tau\Big)$
$$
= \,\ 
q^{-\frac{1}{m+1}(p-\tfrac14-\tfrac{m+1}{2})^2} \, 
e^{2\pi i (p-\frac14-\frac{m+1}{2})z} \, 
\theta_{-(2p-\frac12+m+1), \, m+1}(\tau, z)
$$
\end{enumerate}
\end{enumerate}
\end{note}

\begin{note} 
\label{note:2022-626c}
For $p \in \zzz$, the following formulas hold:
\begin{enumerate}
\item[{\rm 1)}] \quad $\vartheta_{10}\Big(2\tau, \, z+\dfrac{\tau}{2}+2p\tau\Big)
\,\ = \,\ 
q^{-(p+\frac14)^2} \, e^{-2\pi i(p+\frac14)z} \, \theta_{-\frac12,1}(\tau,z)$

\vspace{1mm}

\item[{\rm 2)}] \quad $\vartheta_{10}\Big(2\tau, \, z-\dfrac{\tau}{2}-2p\tau\Big)
\,\ = \,\ 
q^{-(p+\frac14)^2} \, e^{2\pi i(p+\frac14)z} \, \theta_{\frac12,1}(\tau,z)$

\item[{\rm 3)}] \quad $\vartheta_{10}
\Big(2\tau, \, z-\dfrac{\tau}{2}-2p\tau+2\tau\Big)
\,\ = \,\ 
q^{-(p-\frac34)^2} \, e^{2\pi i(p-\frac34)z} \, \theta_{\frac12,1}(\tau,z)$
\end{enumerate}
\end{note}

From Note \ref{note:2022-626a} and Note \ref{note:2022-626c}, 
we have the following:

\begin{note} 
\label{note:2022-626d}
Let $m \in \nnn$ and $p \in \zzz$, then 
\begin{enumerate}
\item[{\rm 1)}]
\begin{enumerate}
\item[{\rm (i)}] \quad $\dfrac{\theta_{0.m+1}
\Big(\tau, \, \dfrac{4p+1}{2(m+1)}\tau+z\Big)
}{\vartheta_{10}\Big(2\tau, \, z+\dfrac{\tau}{2}+2p\tau\Big)}
\,\ = \,\ 
q^{\frac{m}{m+1}(p+\frac14)^2} \, 
\dfrac{\theta_{2p+\frac12, \, m+1}(\tau, z)}{\theta_{-\frac12,1}(\tau,z)}$
\item[{\rm (ii)}] \quad $\dfrac{\theta_{0.m+1}
\Big(\tau, \, \dfrac{4p+1}{2(m+1)}\tau-z\Big)
}{\vartheta_{10}\Big(2\tau, \, z-\dfrac{\tau}{2}-2p\tau\Big)}
\,\ = \,\ 
q^{\frac{m}{m+1}(p+\frac14)^2} \, 
\dfrac{\theta_{-2p-\frac12, \, m+1}(\tau, z)}{\theta_{\frac12,1}(\tau,z)}$
\end{enumerate}
\item[{\rm 2)}]
\begin{enumerate}
\item[{\rm (i)}] \, $\dfrac{\theta_{0.m+1}
\Big(\tau, \, \dfrac{4p-1}{2(m+1)}\tau+z+\tau\Big)
}{\vartheta_{10}\Big(2\tau, \, z+\dfrac{\tau}{2}+2p\tau\Big)}
\, = \,\ 
q^{\frac{m}{m+1}(p-\frac14)^2- \frac{m}{4}} \, 
e^{-\pi imz} \, 
\dfrac{\theta_{2p-\frac12+m+1, \, m+1}(\tau, z)}{\theta_{-\frac12,1}(\tau,z)}$
\item[{\rm (ii)}] \, $\dfrac{\theta_{0.m+1}
\Big(\tau, \, \dfrac{4p-1}{2(m+1)}\tau-z-\tau\Big)
}{\vartheta_{10}\Big(2\tau, \, z-\dfrac{\tau}{2}-2p\tau+2\tau\Big)}
\, = \,\ 
q^{\frac{m}{m+1}(p-\frac14)^2- \frac{m}{4}} \, 
e^{-\pi imz} \, 
\dfrac{\theta_{-(2p-\frac12+m+1), \, m+1}(\tau, z)}{\theta_{\frac12,1}(\tau,z)}$
\end{enumerate}
\end{enumerate}
\end{note}

\section{The simplest cases}
\label{sec:simplest:cases}

In this section we consider  N=3 modules $H(\Lambda^{[K(m), m_2]})$ when $m=1,2,4$. 
Explicit formulas of these characters are given in \cite{KW2017b} for $m=1$
and in \cite{W2022a} for $m=2$ and $m=4$.
In the setting and notations in \cite{W2022a} and \cite{W2022b}, 
these are written as follows:

\begin{prop} \,\ 
\label{prop:2022-201b}
\begin{enumerate}
\item[{\rm 1)}]
\begin{enumerate}
\item[{\rm (i)}] \quad ${\rm ch}^{(+)}_{H(\Lambda^{[K(1), 0})}(\tau, z) 
\,\ = \,\ 
- \, \dfrac{\vartheta_{00}(2\tau,z)}{\eta(\tau)}
\,\ = \,\ 
- \, \dfrac{\theta_{0,1}(\tau,z)}{\eta(\tau)}$
\item[{\rm (ii)}] \quad ${\rm ch}^{(+)}_{H(\Lambda^{[K(1), 1})}(\tau, z) 
\,\ = \,\ 
- \, i \,  \dfrac{\vartheta_{10}(2\tau,z)}{\eta(\tau)}
\,\ = \,\ 
- \, i \, \dfrac{\theta_{1,1}(\tau,z)}{\eta(\tau)}$
\end{enumerate}
\item[{\rm 2)}]
\begin{enumerate}
\item[{\rm (i)}] \,\ ${\rm ch}^{(+)}_{H(\Lambda^{[K(2), 0})}(\tau, z) 
\,\ = \,\ 
- \, \dfrac12 \, \bigg\{
\dfrac{\eta(\frac{\tau}{2})}{\eta(\tau) \, \eta(2\tau)} \, 
\vartheta_{01}(\tau,z) 
\, + \, 
\dfrac{1}{\eta(\frac{\tau}{2}) \, \eta(2\tau)} \, 
\vartheta_{00}(\tau,z)\bigg\} $
\item[{\rm (ii)}] \,\ ${\rm ch}^{(+)}_{H(\Lambda^{[K(2), 2})}(\tau, z) 
\,\ = \,\ 
- \, \dfrac12 \, \bigg\{
\dfrac{\eta(\frac{\tau}{2})}{\eta(\tau) \, \eta(2\tau)} \, 
\vartheta_{01}(\tau,z) 
\, - \, 
\dfrac{1}{\eta(\frac{\tau}{2}) \, \eta(2\tau)} \, 
\vartheta_{00}(\tau,z)\bigg\} $
\item[{\rm (iii)}] \,\ ${\rm ch}^{(+)}_{H(\Lambda^{[K(2), 1})}(\tau, z) 
\,\ = \,\ i \,\ 
\dfrac{\eta(2\tau)}{\eta(\frac{\tau}{2}) \, \eta(\tau)} \, 
\vartheta_{10}(\tau,z)$
\end{enumerate}
\item[{\rm 3)}]
\begin{enumerate}
\item[{\rm (i)}]  ${\rm ch}^{(+)}_{H(\Lambda^{[K(4),1]})}(\tau,z) 
= \, 
\dfrac{i}{2} \cdot \dfrac{1}{\eta(\frac{\tau}{2}) \, \eta(2\tau)} \, 
\bigg\{
\vartheta_{01}(\tau,z) \, \vartheta_{10}(\tau,z)
\, + \, 
\dfrac{\eta(2\tau)}{\eta(\tau)^2} \, 
\vartheta_{00}(\tau,z) \, \vartheta_{10}(\tau,z)\bigg\}$
\item[{\rm (ii)}]  ${\rm ch}^{(+)}_{H(\Lambda^{[K(4),3]})}(\tau,z) 
= \, 
\dfrac{i}{2} \cdot \dfrac{1}{\eta(\frac{\tau}{2}) \, \eta(2\tau)} \, 
\bigg\{
\vartheta_{01}(\tau,z) \, \vartheta_{10}(\tau,z)
\, - \, 
\dfrac{\eta(2\tau)}{\eta(\tau)^2} \, 
\vartheta_{00}(\tau,z) \, \vartheta_{10}(\tau,z)\bigg\}$
\end{enumerate}
\end{enumerate}
\end{prop}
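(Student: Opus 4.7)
The plan is to verify these explicit expressions by translating the character formulas from the cited references \cite{KW2017b} (for $m=1$) and \cite{W2022a} (for $m=2$ and $m=4$) into the Mumford--Jacobi normalization systematically used here. The tools at hand are the linear relations \eqref{n3:eqn:2022-704b} converting between $\theta_{j,2}(\tau,z)$ and $\vartheta_{ab}(\tau,z)$, the multiplication formulas of Lemma \ref{n3:lemma:2022-701a} together with its specializations \eqref{n3:eqn:2022-701b}--\eqref{n3:eqn:2022-701d}, and the derived identities in Lemma \ref{lemma:2022-616a} and Note \ref{note:2022-618b}.

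For part 1), the $m=1$ characters from \cite{KW2017b} are naturally given as $\theta_{0,1}$ or $\theta_{1,1}$ divided by $\eta(\tau)$. The second equalities in (i) and (ii) follow from a direct inspection of the series expansions, which yield $\vartheta_{00}(2\tau,z)=\theta_{0,1}(\tau,z)$ and $\vartheta_{10}(2\tau,z)=\theta_{1,1}(\tau,z)$; the phase $i$ in (ii) is fixed by the chosen branch of $\vartheta_{10}$ on the doubled modulus. For part 2), I start from the $m=2$ character expressions recorded in \cite{W2022a}, which are linear combinations of $\theta_{j,2}(\tau,z)$ divided by a common $\eta$-quotient. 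Inverting \eqref{n3:eqn:2022-704b} so as to write $\theta_{0,2},\theta_{\pm 1,2},\theta_{2,2}$ in terms of $\vartheta_{00},\vartheta_{01},\vartheta_{10},\vartheta_{11}$ and collecting like terms recovers (i)--(iii); the symmetric/antisymmetric combinations in (i) and (ii) correspond exactly to the $\vartheta_{00}\pm\vartheta_{01}$ pattern, while the $\vartheta_{11}$ contribution in (iii) drops out by the parity of the relevant weight.

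For part 3), the $m=4$ characters as given in \cite{W2022a} are naturally expressed in terms of $\theta_{j,4}(\tau,z)$ divided by an $\eta$-quotient. To put them in the product form $\vartheta_{01}\vartheta_{10}\pm(\eta(2\tau)/\eta(\tau)^{2})\,\vartheta_{00}\vartheta_{10}$, the strategy is to factor each $\theta_{j,4}(\tau,z)$ via the multiplication formula \eqref{n3:eqn:2022-701c} applied with $(k,m)=(\pm1,2)$, so that it appears as a $\theta_{j,2}\cdot\theta_{k,2}$ product times a constant theta in $\tau$ alone. Substituting \eqref{n3:eqn:2022-704b} then produces the Mumford theta products. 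Equivalently, one can read off the $\vartheta_{00}\vartheta_{10}$ and $\vartheta_{01}\vartheta_{10}$ factorizations directly from items 2)(i)--(ii) of Note \ref{note:2022-618b}, which already package the desired products as single monomials in $\vartheta_{01}(2\tau,2z)$.

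The main technical obstacle is bookkeeping: matching the $\eta$-quotient prefactors and the relative weight $\eta(2\tau)/\eta(\tau)^{2}$ between the two Mumford products in (3)(i) and (3)(ii). This factor is exactly the ratio of the coefficients appearing in Note \ref{note:2022-618b} item 2), so the cleanest route is to apply Note \ref{note:2022-618b} 2)(ii) and 2)(i) to extract $\vartheta_{00}\vartheta_{10}$ and $\vartheta_{01}\vartheta_{10}$ respectively from the same master expression, and then to verify that the \cite{W2022a} formulas for $m=4$ reorganize into this expected shape after a single application of \eqref{n3:eqn:2022-704b} and Lemma \ref{lemma:2022-616a}. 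Sign and phase tracking (the overall $i/2$ prefactor) follows from the conventions fixed in \cite{W2022a}, \cite{W2022b}.
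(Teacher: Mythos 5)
Your proposal follows essentially the same route as the paper: parts 1) and 2) are taken directly from \cite{KW2017b} and from Theorems 5.1 and 6.1 of \cite{W2022a}, and part 3) is obtained by rewriting Proposition 9.1 of \cite{W2022a} by means of Note \ref{note:2022-618b}, exactly as you ultimately recommend. One bookkeeping correction: in the paper's computation the term $\vartheta_{01}(\tau,z)\,\vartheta_{10}(\tau,z)$ is produced by item 1)(i) of Note \ref{note:2022-618b} (applied to the $\vartheta_{00}(2\tau,2z)$ and $\vartheta_{10}(2\tau,2z)$ terms of the master expression), while $\tfrac{\eta(2\tau)}{\eta(\tau)^2}\,\vartheta_{00}(\tau,z)\,\vartheta_{10}(\tau,z)$ comes from item 2)(i) applied to the remaining $\vartheta_{01}(2\tau,2z)$ term, rather than both products coming from items 2)(i)--(ii) as you suggest.
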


\begin{proof} 1) is due to section 5 in \cite{KW2017b}, written 
in the setting and notations of \cite{W2022a}.
2) is due to Theorem 5.1 and Theorem 6.1 in \cite{W2022a}.
In order to prove 3), we rewrite the formulas for 
${\rm ch}^{(+)}_{H(\Lambda^{[K(4),m_2]})}(\tau,z)$
\,\ $(m_2=1,3)$ in Proposition 9.1 in \cite{W2022a} 
by using Note \ref{note:2022-618b} as follows:
{\allowdisplaybreaks
\begin{eqnarray*}
& & \hspace{-10mm}
{\rm ch}^{(+)}_{H(\Lambda^{[K(4), 1]})}(\tau, z)
\,\ = \,\ 
\dfrac{i}{2} \cdot \dfrac{1}{\eta(\frac{\tau}{2}) \, \eta(2\tau)} \cdot 
\dfrac{\vartheta_{10}(\tau, z)}{\vartheta_{01}(\tau, z)}
\\[1mm]
& &
\times \,\ \bigg\{
\frac{\eta(2\tau)^5}{\eta(\tau)^2 \eta(4\tau)^2} \, 
\vartheta_{00}(2\tau,2z)
\,\ - \,\ 2 \, 
\frac{\eta(4\tau)^2}{\eta(2\tau)} \, \vartheta_{10}(2\tau,2z) 
\,\ + \,\ \vartheta_{01}(2\tau,2z)\bigg\}
\\[3mm]
&=&
\dfrac{i}{2} \cdot \dfrac{1}{\eta(\frac{\tau}{2}) \, \eta(2\tau)} \, 
\Bigg\{
\underbrace{
\frac{\vartheta_{10}(\tau, z)}{\vartheta_{01}(\tau, z)} \, 
\bigg[
\frac{\eta(2\tau)^5}{\eta(\tau)^2 \eta(4\tau)^2} \, 
\vartheta_{00}(2\tau,2z)
- \, 2 \, 
\frac{\eta(4\tau)^2}{\eta(2\tau)} \, \vartheta_{10}(2\tau,2z) \bigg]
}_{\substack{|| \\[0mm] {\displaystyle 
\vartheta_{01}(\tau,z) \, \vartheta_{10}(\tau,z)
}}}
\\[-3mm]
& &
+ \,\ 
\underbrace{
\frac{\vartheta_{10}(\tau, z)}{\vartheta_{01}(\tau, z)} \, 
\vartheta_{01}(2\tau,2z)}_{\substack{|| \\[-1mm] {\displaystyle 
\frac{\eta(2\tau)}{\eta(\tau)^2} \, 
\vartheta_{00}(\tau,z) \, \vartheta_{10}(\tau,z)
}}}
\Bigg\}
\\[1mm]
&=&
\dfrac{i}{2} \cdot \dfrac{1}{\eta(\frac{\tau}{2}) \, \eta(2\tau)} \, 
\Bigg\{
\vartheta_{01}(\tau,z) \, \vartheta_{10}(\tau,z)
\, + \, 
\frac{\eta(2\tau)}{\eta(\tau)^2} \, 
\vartheta_{00}(\tau,z) \, \vartheta_{10}(\tau,z)\Bigg\}
\\[3mm]
& & \hspace{-10mm}
{\rm ch}^{(+)}_{H(\Lambda^{[K(4), 3]})}(\tau, z)
\,\ = \,\ 
\dfrac{i}{2} \cdot \dfrac{1}{\eta(\frac{\tau}{2}) \, \eta(2\tau)} \cdot 
\dfrac{\vartheta_{10}(\tau, z)}{\vartheta_{01}(\tau, z)}
\\[1mm]
& &
\times \,\ \bigg\{
\frac{\eta(2\tau)^5}{\eta(\tau)^2 \eta(4\tau)^2} \, 
\vartheta_{00}(2\tau,2z)
\,\ - \,\ 2 \, 
\frac{\eta(4\tau)^2}{\eta(2\tau)} \, \vartheta_{10}(2\tau,2z) 
\,\ - \,\ \vartheta_{01}(2\tau,2z)\bigg\}
\\[3mm]
&=&
\dfrac{i}{2} \cdot \dfrac{1}{\eta(\frac{\tau}{2}) \, \eta(2\tau)} \, 
\Bigg\{
\underbrace{
\frac{\vartheta_{10}(\tau, z)}{\vartheta_{01}(\tau, z)} \, 
\bigg[
\frac{\eta(2\tau)^5}{\eta(\tau)^2 \eta(4\tau)^2} \, 
\vartheta_{00}(2\tau,2z)
- \, 2 \, 
\frac{\eta(4\tau)^2}{\eta(2\tau)} \, \vartheta_{10}(2\tau,2z) \bigg]
}_{\substack{|| \\[0mm] {\displaystyle 
\vartheta_{01}(\tau,z) \, \vartheta_{10}(\tau,z)
}}}
\\[-3mm]
& &
- \,\ 
\underbrace{
\frac{\vartheta_{10}(\tau, z)}{\vartheta_{01}(\tau, z)} \, 
\vartheta_{01}(2\tau,2z)}_{\substack{|| \\[-1mm] {\displaystyle 
\frac{\eta(2\tau)}{\eta(\tau)^2} \, 
\vartheta_{00}(\tau,z) \, \vartheta_{10}(\tau,z)
}}}
\Bigg\}
\\[1mm]
&=&
\dfrac{i}{2} \cdot \dfrac{1}{\eta(\frac{\tau}{2}) \, \eta(2\tau)} \, 
\Bigg\{
\vartheta_{01}(\tau,z) \, \vartheta_{10}(\tau,z)
\, - \, 
\frac{\eta(2\tau)}{\eta(\tau)^2} \, 
\vartheta_{00}(\tau,z) \, \vartheta_{10}(\tau,z)\Bigg\}
\end{eqnarray*}}
proving 3).
\end{proof}

Note the following formulas obtained from this 
Proposition \ref{prop:2022-201b}:

\begin{note} \,\
\label{note:2022-609a}
\begin{enumerate}
\item[{\rm 1)}]
\begin{enumerate}
\item[{\rm (i)}] \,\ $\vartheta_{00}(\tau,z)
\, = \,  - \, 
\eta(\frac{\tau}{2})\eta(2\tau) \, \Big\{
{\rm ch}_{H(\Lambda^{[K(2), 0]})}^{(+)} (\tau,z)
\, - \, 
{\rm ch}_{H(\Lambda^{[K(2), 2]})}^{(+)} (\tau,z)\Big\}$
\item[{\rm (ii)}] \,\ $\vartheta_{01}(\tau,z)
\, = \,  - \, 
\dfrac{\eta(\tau)\eta(2\tau)}{\eta(\frac{\tau}{2})} \, \Big\{
{\rm ch}_{H(\Lambda^{[K(2), 0]})}^{(+)} (\tau,z)
\, + \, 
{\rm ch}_{H(\Lambda^{[K(2), 2]})}^{(+)} (\tau,z)\Big\}$
\end{enumerate}
\item[{\rm 2)}]
\begin{enumerate}
\item[{\rm (i)}] \,\ $\vartheta_{01}(\tau,z) \, \vartheta_{10}(\tau,z)
\, = \, 
- \, i \, \eta(\tfrac{\tau}{2}) \, \eta(2\tau) \, \big\{
{\rm ch}^{(+)}_{H(\Lambda^{[K(4),1]})}(\tau,z) 
+
{\rm ch}^{(+)}_{H(\Lambda^{[K(4),3]})}(\tau,z) \big\}$

\item[{\rm (ii)}] \,\ $\vartheta_{00}(\tau,z) \, \vartheta_{10}(\tau,z)
\, = \, 
- \, i \, \eta(\tfrac{\tau}{2}) \, \eta(\tau)^2 \, \big\{
{\rm ch}^{(+)}_{H(\Lambda^{[K(4),1]})}(\tau,z) 
-
{\rm ch}^{(+)}_{H(\Lambda^{[K(4),3]})}(\tau,z) \big\}$
\end{enumerate}
\end{enumerate}
\end{note}

Now we can prove the following propositions:

\begin{prop} 
\label{prop:2022-607a}
The characters of $H(\Lambda^{[K(1), m_2]}) \otimes H(\Lambda^{[K(1), m_2']})$ 
are as follows:
\begin{enumerate}
\item[{\rm 1)}] \quad $
{\rm ch}^{(+)}_{H(\Lambda^{[K(1), 0})}(\tau, z) \cdot
{\rm ch}^{(+)}_{H(\Lambda^{[K(1), 1})}(\tau, z) 
\,\ = \,\ 
\dfrac{\eta(\frac{\tau}{2})\eta(2\tau)}{\eta(\tau)^2} \cdot 
{\rm ch}^{(+)}_{H(\Lambda^{[K(2), 1})}(\tau, z) $
\item[{\rm 2)}] \quad ${\rm ch}^{(+)}_{H(\Lambda^{[K(1), 0})}(\tau, z)^2 
\,\ = \,\ 
- \, \dfrac12 \, \bigg\{
\dfrac{\eta(\tau)^3}{\eta(\frac{\tau}{2})\eta(2\tau)}
\, + \, 
\dfrac{\eta(\frac{\tau}{2})\eta(2\tau)}{\eta(\tau)^2} \bigg\} \, 
{\rm ch}_{H(\Lambda^{[K(2), 0]})}^{(+)} (\tau,z)$
$$ \hspace{17.3mm}
+ \,\ \frac12 \, \bigg\{
\frac{\eta(\tau)^3}{\eta(\frac{\tau}{2})\eta(2\tau)} 
\, - \, 
\frac{\eta(\frac{\tau}{2})\eta(2\tau)}{\eta(\tau)^2} \bigg\} \, 
{\rm ch}_{H(\Lambda^{[K(2), 2]})}^{(+)} (\tau,z)
$$
\item[{\rm 3)}] \quad $
{\rm ch}^{(+)}_{H(\Lambda^{[K(1), 1})}(\tau, z)^2 
\,\ = \,\ 
\dfrac12 \, \bigg\{
\dfrac{\eta(\tau)^3}{\eta(\frac{\tau}{2})\eta(2\tau)}
\, - \, 
\dfrac{\eta(\frac{\tau}{2})\eta(2\tau)}{\eta(\tau)^2} \bigg\} \, 
{\rm ch}_{H(\Lambda^{[K(2), 0]})}^{(+)} (\tau,z)$
$$ \hspace{11.5mm}
- \,\ \frac12 \, \bigg\{
\frac{\eta(\tau)^3}{\eta(\frac{\tau}{2})\eta(2\tau)} 
\, + \, 
\frac{\eta(\frac{\tau}{2})\eta(2\tau)}{\eta(\tau)^2} \bigg\} \, 
{\rm ch}_{H(\Lambda^{[K(2), 2]})}^{(+)} (\tau,z)
$$
\end{enumerate}
\end{prop}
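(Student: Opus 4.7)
The plan is a direct computation: substitute the closed forms for the three characters from Proposition \ref{prop:2022-201b}(1), expand the resulting products of Jacobi theta functions using Note \ref{n3:note:2022-704a}, and then re-express the resulting $\vartheta_{00},\vartheta_{01},\vartheta_{10}$ in terms of the $K(2)$-characters via Note \ref{note:2022-609a}. All ingredients are already in place; only careful bookkeeping of the $\eta$-factors is required.

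More concretely, for part 1) I would start from
$$
{\rm ch}^{(+)}_{H(\Lambda^{[K(1), 0]})} \cdot {\rm ch}^{(+)}_{H(\Lambda^{[K(1), 1]})}
\,=\, \frac{i}{\eta(\tau)^2}\,\theta_{0,1}(\tau,z)\,\theta_{1,1}(\tau,z),
$$
apply Note \ref{n3:note:2022-704a}(3) to get $\theta_{0,1}\theta_{1,1}=\eta(2\tau)^2\vartheta_{10}(\tau,z)/\eta(\tau)$, and then insert the identity
$\vartheta_{10}(\tau,z)=-i\,\eta(\tfrac{\tau}{2})\eta(\tau)\,\eta(2\tau)^{-1}\,{\rm ch}^{(+)}_{H(\Lambda^{[K(2),1]})}$ obtained by solving Proposition \ref{prop:2022-201b}(2)(iii) for $\vartheta_{10}$. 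Collecting the $\eta$-factors gives the prefactor $\eta(\tfrac{\tau}{2})\eta(2\tau)/\eta(\tau)^2$.

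For parts 2) and 3) I would again start from the $\theta_{j,1}$-expressions,
$$
{\rm ch}^{(+)}_{H(\Lambda^{[K(1), j]})}(\tau,z)^2 \,=\, \frac{(-1)^{j+1}}{\eta(\tau)^2}\,\theta_{j,1}(\tau,z)^2 \qquad (j=0,1),
$$
and substitute the two expansions in Note \ref{n3:note:2022-704a}(1),(2) to obtain linear combinations of $\vartheta_{00}(\tau,z)$ and $\vartheta_{01}(\tau,z)$ with explicit $\eta$-coefficients. Finally I would replace each $\vartheta_{0\epsilon}$ by its expression in terms of ${\rm ch}^{(+)}_{H(\Lambda^{[K(2),0]})}$ and ${\rm ch}^{(+)}_{H(\Lambda^{[K(2),2]})}$ given by Note \ref{note:2022-609a}(1), yielding the two asserted formulas after regrouping the coefficients of the two $K(2)$-characters.

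No conceptual obstacle is anticipated; the only nontrivial part is checking that the various $\eta$-quotients combine to the precise expressions $\eta(\tau)^3/(\eta(\tfrac{\tau}{2})\eta(2\tau))$ and $\eta(\tfrac{\tau}{2})\eta(2\tau)/\eta(\tau)^2$ stated in the proposition. This reduces to simplifying, for $j=0,1$, the product
$$
\frac{1}{\eta(\tau)^2}\cdot\tfrac12\,\eta(\tau)\cdot\Big[\tfrac{\eta(\tau)^2}{\eta(\tfrac{\tau}{2})\eta(2\tau)}\Big]^2\cdot\eta(\tfrac{\tau}{2})\eta(2\tau)
\quad\text{and}\quad
\frac{1}{\eta(\tau)^2}\cdot\tfrac12\,\eta(\tau)\cdot\Big[\tfrac{\eta(\tfrac{\tau}{2})}{\eta(\tau)}\Big]^2\cdot\frac{\eta(\tau)\eta(2\tau)}{\eta(\tfrac{\tau}{2})},
$$
which match the two coefficients appearing in 2) and 3); the signs follow from the $(-1)^{j+1}$ factor and from the signs in Note \ref{note:2022-609a}(1). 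This step is routine algebra, so the entire proof amounts to a single careful assembly of the cited identities.
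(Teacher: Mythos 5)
Your route is exactly the paper's: the proof given there consists of precisely the substitutions you describe, namely Proposition \ref{prop:2022-201b}, Note \ref{n3:note:2022-704a} and Note \ref{note:2022-609a}, and your assembly of the $\eta$-quotients is correct (both displayed products simplify as you claim). One sign slip, however: since $(-1)^2=+1$ and $(-i)^2=-1$, the prefactor in your squared-character formula must be $(-1)^{j}$, not $(-1)^{j+1}$; as written, your derivation of parts 2) and 3) would produce the negatives of the stated identities. With that single correction the argument reproduces the proposition.
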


\begin{proof} These formulas follow immediately from Proposition
\ref{prop:2022-201b} and Note \ref{n3:note:2022-704a} and 
Note \ref{note:2022-609a}.
\end{proof}

\begin{prop} 
\label{prop:2022-622a}
The characters of $H(\Lambda^{[K(2), 1]}) \otimes H(\Lambda^{[K(2), m_2]})$ 
\,\ $(m_2\in \{0,2\})$ are as follows:
\begin{enumerate}
\item[{\rm 1)}] \,\ ${\rm ch}^{(+)}_{H(\Lambda^{[K(2), 1})}(\tau,z) \cdot
{\rm ch}^{(+)}_{H(\Lambda^{[K(2), 0})}(\tau,z)$
$$ \hspace{-5mm}
= - \, \frac12 \bigg\{
\frac{\eta(\tau)}{\eta(\frac{\tau}{2})}
+
\frac{\eta(\frac{\tau}{2}) \eta(2\tau)}{\eta(\tau)^2}
\bigg\} 
{\rm ch}^{(+)}_{H(\Lambda^{[K(4),1]})}(\tau,z) 
+ \frac12 \bigg\{
\frac{\eta(\tau)}{\eta(\frac{\tau}{2})}
- 
\frac{\eta(\frac{\tau}{2}) \eta(2\tau)}{\eta(\tau)^2}\bigg\}  
{\rm ch}^{(+)}_{H(\Lambda^{[K(4),3]})}(\tau,z) 
$$
\item[{\rm 2)}] \,\ ${\rm ch}^{(+)}_{H(\Lambda^{[K(2), 1})}(\tau,z) \cdot
{\rm ch}^{(+)}_{H(\Lambda^{[K(2), 2})}(\tau,z)$
$$
= \frac12 \bigg\{
\frac{\eta(\tau)}{\eta(\frac{\tau}{2})}
-
\frac{\eta(\frac{\tau}{2}) \eta(2\tau)}{\eta(\tau)^2}
\bigg\} 
{\rm ch}^{(+)}_{H(\Lambda^{[K(4),1]})}(\tau,z) 
- \frac12 \bigg\{
\frac{\eta(\tau)}{\eta(\frac{\tau}{2})}
+ 
\frac{\eta(\frac{\tau}{2}) \eta(2\tau)}{\eta(\tau)^2}\bigg\}  
{\rm ch}^{(+)}_{H(\Lambda^{[K(4),3]})}(\tau,z) 
$$
\end{enumerate}
\end{prop}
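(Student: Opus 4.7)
The strategy is a direct computation that parallels the proofs of Proposition \ref{prop:2022-201b} 3) and Proposition \ref{prop:2022-607a}: substitute the $K(2)$-character formulas from Proposition \ref{prop:2022-201b} 2), expand the product, and then re-express the resulting products of Mumford theta functions as $K(4)$-characters by means of Note \ref{note:2022-609a} 2).

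Concretely, I would first multiply Proposition \ref{prop:2022-201b} 2)(iii), which reads ${\rm ch}^{(+)}_{H(\Lambda^{[K(2),1]})}(\tau,z) = i\eta(2\tau)/[\eta(\frac{\tau}{2})\eta(\tau)] \cdot \vartheta_{10}(\tau,z)$, against formula (i) or (ii) for ${\rm ch}^{(+)}_{H(\Lambda^{[K(2),m_2]})}(\tau,z)$ with $m_2 \in \{0,2\}$. The prefactor cancels the two denominators cleanly: the coefficient of $\vartheta_{01}(\tau,z)\vartheta_{10}(\tau,z)$ reduces to $1/\eta(\tau)^2$, and that of $\vartheta_{00}(\tau,z)\vartheta_{10}(\tau,z)$ reduces to $1/[\eta(\frac{\tau}{2})^2\eta(\tau)]$, leaving an overall factor $-i/2$ together with an internal sign that is $+$ for $m_2=0$ and $-$ for $m_2=2$, inherited from the $\pm\vartheta_{00}$ difference between Proposition \ref{prop:2022-201b} 2)(i) and 2)(ii).

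Next, I would invoke Note \ref{note:2022-609a} 2) to substitute $\vartheta_{01}\vartheta_{10} = -i\eta(\frac{\tau}{2})\eta(2\tau)\,\{{\rm ch}^{(+)}_{H(\Lambda^{[K(4),1]})} + {\rm ch}^{(+)}_{H(\Lambda^{[K(4),3]})}\}$ and $\vartheta_{00}\vartheta_{10} = -i\eta(\frac{\tau}{2})\eta(\tau)^2\,\{{\rm ch}^{(+)}_{H(\Lambda^{[K(4),1]})} - {\rm ch}^{(+)}_{H(\Lambda^{[K(4),3]})}\}$. The $\eta$-prefactors telescope to $\eta(\frac{\tau}{2})\eta(2\tau)/\eta(\tau)^2$ and $\eta(\tau)/\eta(\frac{\tau}{2})$ respectively, and the two factors of $-i$ combine with the leading $-i/2$ to give an overall real coefficient of $-1/2$. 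Grouping by $K(4)$-character then produces exactly the sum/difference combinations of $\eta(\tau)/\eta(\frac{\tau}{2})$ and $\eta(\frac{\tau}{2})\eta(2\tau)/\eta(\tau)^2$ stated in the proposition; flipping the internal sign to treat the $m_2=2$ case simply interchanges the coefficients of ${\rm ch}^{(+)}_{H(\Lambda^{[K(4),1]})}$ and ${\rm ch}^{(+)}_{H(\Lambda^{[K(4),3]})}$, deriving formula 2) from formula 1).

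There is no conceptual obstacle: all ingredients are already in hand, and the argument reduces to clean $\eta$-arithmetic and careful sign tracking. In practice I would write out case 1) in full and obtain case 2) by a single sign change on the $\vartheta_{00}\vartheta_{10}$ contribution.
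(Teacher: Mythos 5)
Your proposal is correct and follows essentially the same route as the paper's own proof: substitute the $K(2)$-character formulas from Proposition \ref{prop:2022-201b} 2), expand, and convert $\vartheta_{01}\vartheta_{10}$ and $\vartheta_{00}\vartheta_{10}$ into $K(4)$-characters via Note \ref{note:2022-609a} 2); your sign- and $\eta$-bookkeeping (including the swap of coefficients between the $m_2=0$ and $m_2=2$ cases) matches the paper's computation.
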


\begin{proof} These formulas can be shown easily from Proposition
\ref{prop:2022-201b} and Note \ref{note:2022-609a} as follows:

\vspace{2mm}

\noindent
1) \quad ${\rm ch}^{(+)}_{H(\Lambda^{[K(2), 1})}(\tau,z) \cdot
{\rm ch}^{(+)}_{H(\Lambda^{[K(2), 0})}(\tau,z)$
{\allowdisplaybreaks
\begin{eqnarray*}
&= & - \frac{i}{2} \cdot 
\frac{\eta(2\tau)}{\eta(\frac{\tau}{2}) \, \eta(\tau)} \, 
\vartheta_{10}(\tau,z) \, \bigg\{
\dfrac{\eta(\frac{\tau}{2})}{\eta(\tau) \, \eta(2\tau)} \, 
\vartheta_{01}(\tau,z) 
\, + \, 
\dfrac{1}{\eta(\frac{\tau}{2}) \, \eta(2\tau)} \, 
\vartheta_{00}(\tau,z)\bigg\}
\\[3mm]
&=&
- \frac{i}{2} \cdot \frac{1}{\eta(\tau)^2} \, 
\vartheta_{10}(\tau,z) \, \vartheta_{01}(\tau,z) 
- 
\frac{i}{2} \cdot \frac{1}{\eta(\frac{\tau}{2})^2 \, \eta(\tau)} \, 
\vartheta_{10}(\tau,z) \, \vartheta_{00}(\tau,z)
\\[2mm]
&=&
- \,\ \frac{i}{2} \cdot \frac{1}{\eta(\tau)^2} \cdot 
(- \, i) \, \eta(\tfrac{\tau}{2}) \, \eta(2\tau) \, \big\{
{\rm ch}^{(+)}_{H(\Lambda^{[K(4),1]})}(\tau,z) 
+
{\rm ch}^{(+)}_{H(\Lambda^{[K(4),3]})}(\tau,z) \big\}
\\[2mm]
& &- \,\ 
\frac{i}{2} \cdot \frac{1}{\eta(\frac{\tau}{2})^2 \, \eta(\tau)} \cdot 
(- \, i) \, \eta(\tfrac{\tau}{2}) \, \eta(\tau)^2 \, \big\{
{\rm ch}^{(+)}_{H(\Lambda^{[K(4),1]})}(\tau,z) 
-
{\rm ch}^{(+)}_{H(\Lambda^{[K(4),3]})}(\tau,z) \big\}
\\[2mm]
&=&
- \frac12 \bigg\{
\frac{\eta(\frac{\tau}{2}) \eta(2\tau)}{\eta(\tau)^2}
+
\frac{\eta(\tau)}{\eta(\frac{\tau}{2})}\bigg\} 
{\rm ch}^{(+)}_{H(\Lambda^{[K(4),1]})}(\tau,z) 
+ \frac12 \bigg\{
\frac{\eta(\tau)}{\eta(\frac{\tau}{2})}
- 
\frac{\eta(\frac{\tau}{2}) \eta(2\tau)}{\eta(\tau)^2}\bigg\}  
{\rm ch}^{(+)}_{H(\Lambda^{[K(4),3]})}(\tau,z) 
\end{eqnarray*}}
proving 1).

\vspace{3mm}

\noindent
2) \quad ${\rm ch}^{(+)}_{H(\Lambda^{[K(2), 1})}(\tau,z) \cdot
{\rm ch}^{(+)}_{H(\Lambda^{[K(2), 2})}(\tau,z)$
{\allowdisplaybreaks
\begin{eqnarray*}
&= & - \frac{i}{2} \cdot 
\frac{\eta(2\tau)}{\eta(\frac{\tau}{2}) \, \eta(\tau)} \, 
\vartheta_{10}(\tau,z) \, \bigg\{
\dfrac{\eta(\frac{\tau}{2})}{\eta(\tau) \, \eta(2\tau)} \, 
\vartheta_{01}(\tau,z) 
\, - \, 
\dfrac{1}{\eta(\frac{\tau}{2}) \, \eta(2\tau)} \, 
\vartheta_{00}(\tau,z)\bigg\}
\\[3mm]
&=&
- \frac{i}{2} \cdot \frac{1}{\eta(\tau)^2} \, 
\vartheta_{10}(\tau,z) \, \vartheta_{01}(\tau,z) 
+ 
\frac{i}{2} \cdot \frac{1}{\eta(\frac{\tau}{2})^2 \, \eta(\tau)} \, 
\vartheta_{10}(\tau,z) \, \vartheta_{00}(\tau,z)
\\[2mm]
&=&
- \,\ \frac{i}{2} \cdot \frac{1}{\eta(\tau)^2} \cdot 
(- \, i) \, \eta(\tfrac{\tau}{2}) \, \eta(2\tau) \, \big\{
{\rm ch}^{(+)}_{H(\Lambda^{[K(4),1]})}(\tau,z) 
+
{\rm ch}^{(+)}_{H(\Lambda^{[K(4),3]})}(\tau,z) \big\}
\\[3mm]
& &+ \,\ 
\frac{i}{2} \cdot \frac{1}{\eta(\frac{\tau}{2})^2 \, \eta(\tau)} \cdot 
(- \, i) \, \eta(\tfrac{\tau}{2}) \, \eta(\tau)^2 \, \big\{
{\rm ch}^{(+)}_{H(\Lambda^{[K(4),1]})}(\tau,z) 
-
{\rm ch}^{(+)}_{H(\Lambda^{[K(4),3]})}(\tau,z) \big\}
\\[2mm]
&=&
\frac12 \bigg\{
\frac{\eta(\tau)}{\eta(\frac{\tau}{2})}
-
\frac{\eta(\frac{\tau}{2}) \eta(2\tau)}{\eta(\tau)^2}
\bigg\} 
{\rm ch}^{(+)}_{H(\Lambda^{[K(4),1]})}(\tau,z) 
- \frac12 \bigg\{
\frac{\eta(\tau)}{\eta(\frac{\tau}{2})}
+ 
\frac{\eta(\frac{\tau}{2}) \eta(2\tau)}{\eta(\tau)^2}\bigg\}  
{\rm ch}^{(+)}_{H(\Lambda^{[K(4),3]})}(\tau,z) 
\end{eqnarray*}}
proving 2).
\end{proof}

\section{Numerators of N=3 characters}
\label{sec:n3:numerators}

\begin{lemma} 
\label{lemma:2022-626a}
For $m \in \frac12 \nnn$ and $p \in \zzz$, the following formulas hold:
\begin{enumerate}
\item[{\rm 1)}] \,\ $
\theta_{0, 2m+1}
\Big(\tau, \, - \, \dfrac12+\dfrac{m(4p+1)\tau}{2m+1}\Big) \, 
\Phi^{[m, \frac12]}
\Big(2\tau, \,\ z+\dfrac{\tau}{2}-\dfrac12+2p\tau, \,\ 
z-\dfrac{\tau}{2}+\dfrac12-2p\tau, \,\ 0\Big)$
{\allowdisplaybreaks
\begin{eqnarray*}
&=&
-i \, \eta(2\tau)^3 \, \Bigg\{ \, 
\frac{\displaystyle 
\theta_{0, 2m+1}\Big(\tau, \, \frac{(4p+1)\tau}{2(2m+1)}+z\Big)
}{\vartheta_{10}(2\tau, \, z+\frac{\tau}{2}+2p\tau)} 
\, - \, 
\frac{\displaystyle 
\theta_{0, 2m+1}\Big(\tau, \, \frac{(4p+1)\tau}{2(2m+1)}-z\Big)
}{\vartheta_{10}(2\tau, \, z-\frac{\tau}{2}-2p\tau)} \Bigg\}
\\[2mm]
& & \hspace{-5mm}
+ \, \sum_{j=1}^{\infty} \sum_{r=1}^j 
\sum_{\substack{k=1 \\[1mm] k \, : \, {\rm odd}}}^{2m-1}
(-1)^j \, q^{j^2-\frac{1}{8m}(4mr-k)^2} %
\nonumber
\\[2mm]
& & 
\times \, \big\{
q^{(j+p+\frac14) (4mr-k)}e^{\frac{\pi i}{2}(4mr+k)}
\, + \, 
q^{(j-p-\frac14) (4mr-k)}e^{\frac{\pi i}{2}(4mr-k)}
\big\}
\big[\theta_{k,2m}-\theta_{-k,2m}](\tau, z)
\nonumber
\\[2mm]
& & \hspace{-5mm}
- \, \sum_{j=1}^{\infty} \sum_{r=0}^{j-1} 
\sum_{\substack{k=1 \\[1mm] k \, : \, {\rm odd}}}^{2m-1}
(-1)^j \, q^{j^2-\frac{1}{8m}(4mr+k)^2} 
\nonumber
\\[2mm]
& & 
\times \big\{
q^{(j+p+\frac14) (4mr+k)}e^{\frac{\pi i}{2}(4mr-k)}
\, + \, 
q^{(j-p-\frac14) (4mr+k)}e^{\frac{\pi i}{2}(4mr+k)}
\big\}
\big[\theta_{k,2m}-\theta_{-k,2m}](\tau, z)
\end{eqnarray*}}

\item[{\rm 2)}] \,\ $\theta_{0, 2m+1}
\Big(\tau, \, - \dfrac12+\dfrac{m(4p-1)\tau}{2m+1}\Big) \, 
\Phi^{[m, \frac12]}
\Big(2\tau, \, z+\dfrac{\tau}{2}-\dfrac12+2p\tau, \, 
z-\dfrac{\tau}{2}+\dfrac12-2p\tau+2\tau, \, 0\Big)$
{\allowdisplaybreaks
\begin{eqnarray*}
&=&
-i \, \eta(2\tau)^3 \, \Bigg\{ \, 
\frac{\displaystyle 
\theta_{0, 2m+1}\Big(\tau, \, \frac{(4p-1)\tau}{2(2m+1)}+z+\tau\Big)
}{\vartheta_{10}(2\tau, \, z+\frac{\tau}{2}+2p\tau)} 
\, - \, 
\frac{\displaystyle 
\theta_{0, 2m+1}\Big(\tau, \, \frac{(4p-1)\tau}{2(2m+1)}-z-\tau\Big)
}{\vartheta_{10}(2\tau, \, z-\frac{\tau}{2}-2p\tau+2\tau)} \Bigg\}
\\[2mm]
& & \hspace{-7mm}
+ \,\ q^{-\frac{m}{2}}e^{-2\pi imz}
\sum_{j=1}^{\infty} \sum_{r=1}^j 
\sum_{\substack{k=1 \\[1mm] k \, : \, {\rm odd}}}^{2m-1}
(-1)^j \, q^{j^2-\frac{1}{8m}(4mr-k)^2} %
\nonumber
\\[2mm]
& & \hspace{-5mm}
\times \big\{
q^{(j+p-\frac14) (4mr-k)}e^{\frac{\pi i}{2}(4mr+k)}
+ 
q^{(j-p+\frac14) (4mr-k)}e^{\frac{\pi i}{2}(4mr-k)}
\big\}
\big[\theta_{k+2m,2m}-\theta_{-(k+2m),2m}](\tau, z)
\nonumber
\\[2mm]
& & \hspace{-7mm}
- \,\ q^{-\frac{m}{2}}e^{-2\pi imz}
\sum_{j=1}^{\infty} \sum_{r=0}^{j-1} 
\sum_{\substack{k=1 \\[1mm] k \, : \, {\rm odd}}}^{2m-1}
(-1)^j \, q^{j^2-\frac{1}{8m}(4mr+k)^2} 
\nonumber
\\[2mm]
& & \hspace{-5mm}
\times \big\{
q^{(j+p-\frac14) (4mr+k)}e^{\frac{\pi i}{2}(4mr-k)}
+ 
q^{(j-p+\frac14) (4mr+k)}e^{\frac{\pi i}{2}(4mr+k)}
\big\}
\big[\theta_{k+2m,2m}-\theta_{-(k+2m),2m}](\tau, z)
\end{eqnarray*}}
\end{enumerate}
\end{lemma}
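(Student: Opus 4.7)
\medskip

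\noindent
\textbf{Proof plan.}
The strategy is to start from the explicit series/Appell-type definition of $\Phi^{[m,\frac12]}$ used in \cite{W2022a,W2022b,W2022c}, substitute the special arguments of Lemma \ref{lemma:2022-626a}, and then isolate a ``principal'' part (the theta quotients over $\vartheta_{10}$ in the first line of the RHS) from the ``tail'' (the double-sum corrections in the second line). The substituted arguments are not arbitrary: the combinations $z\pm\frac\tau2\pm 2p\tau$ (and their $z\mapsto -z$ image) are precisely the poles of the Appell denominator in $\Phi^{[m,\frac12]}$, so the calculation is really a theta-function version of separating off the principal part of an elliptic function at its poles.

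\medskip

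\noindent
\textbf{Step 1 (preparing the prefactor).}
First I rewrite the multiplicative factor $\theta_{0,2m+1}\!\big(\tau,\,-\tfrac12+\tfrac{m(4p\pm 1)\tau}{2m+1}\big)$ using Note \ref{note:2022-626a}~1); this converts the $-\tfrac12$-shift into the $\theta^{(\pm)}_{\cdot,\,m+1}$ combinations and exposes the factor $q^{-\frac{m^2}{m+1}(p\pm\frac14)^2}$, which will later recombine with the $q^{\frac{m}{m+1}(p\pm\frac14)^2}$ produced by Note \ref{note:2022-626d}. Simultaneously I record, via Note \ref{note:2022-626c}, the effect of the shifts $z\pm\frac\tau2\pm 2p\tau$ (and $+2\tau$ in case 2)) on $\vartheta_{10}(2\tau,\cdot)$. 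The point of doing this first is that afterwards everything on the right-hand side can be matched to the clean ratios of Note \ref{note:2022-626d}.

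\medskip

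\noindent
\textbf{Step 2 (unfolding $\Phi^{[m,\frac12]}$).}
Substituting the arguments into the Appell-type definition of $\Phi^{[m,\frac12]}(2\tau,\cdot,\cdot,0)$ and using the $z_1+z_2\in \zzz$ relation satisfied by the two middle arguments (in case 1) $z_1+z_2=0$, in case 2) $z_1+z_2=2\tau$), the denominator factors collapse so that the sum splits naturally into a ``$j=0$'' piece and a ``$|j|\geq 1$'' piece. The $j=0$ piece, after the prefactor of Step 1 is multiplied in, reproduces exactly the two theta quotients
$$
\theta_{0,2m+1}\!\Big(\tau,\tfrac{(4p\pm1)\tau}{2(2m+1)}+z\Big)\big/\vartheta_{10}(2\tau,z+\tfrac\tau2+2p\tau)
$$
and the corresponding $z\mapsto -z$ term, via Note \ref{note:2022-626d}. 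The factor $-i\eta(2\tau)^3$ appears as the Jacobi triple-product prefactor hidden in the Appell function's normalisation.

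\medskip

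\noindent
\textbf{Step 3 (the tail).}
The remaining $|j|\geq 1$ contributions are rewritten by pairing $j$ with $-j$ (this is what produces the antisymmetric combinations $[\theta_{k,2m}-\theta_{-k,2m}]$ in case 1) and $[\theta_{k+2m,2m}-\theta_{-(k+2m),2m}]$ in case 2)). After expanding the theta-series defining $\Phi$ and regrouping the summation over $j$ and the residue index $r$, one obtains the double-sum parts of the RHS; the two sub-sums (one with $r=1,\dots,j$ and one with $r=0,\dots,j-1$) arise from splitting the integer index of the inner theta-sum into ``$4mr-k$'' and ``$4mr+k$'' residue classes modulo $4m$, with $k$ odd in $\{1,\dots,2m-1\}$. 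The overall factor $q^{-\frac m2}e^{-2\pi imz}$ appearing in case 2) is the direct trace of the extra $+2\tau$ shift through Notes \ref{note:2022-626a}~4) and \ref{note:2022-626c}~3).

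\medskip

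\noindent
\textbf{Main obstacle.} The only genuinely delicate point is the bookkeeping in Step 3: one has to verify that the exponents of $q$, the phase factors $e^{\frac{\pi i}{2}(4mr\pm k)}$, and the ranges of $r$ all match exactly between the unfolded Appell series and the claimed double-sum expression. In particular, the asymmetric splitting $r=1,\dots,j$ versus $r=0,\dots,j-1$ comes from the orientation of the pairing $j\leftrightarrow -j$ and must be tracked carefully, as must the parity constraint that $k$ is odd (which reflects that only the half-integer index classes of $\theta_{\cdot,2m}$ contribute after the $z\to z\pm\frac\tau2$ shift). Once this bookkeeping is carried out, the two formulas in 1) and 2) fall out by identical manipulations, differing only by the $+2\tau$-shift in case 2) and the consequent $q^{-m/2}e^{-2\pi imz}$ factor.
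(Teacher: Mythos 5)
Your proposal and the paper take genuinely different routes. The paper's proof is a one‑line specialization: it takes the already‑established general identity of Corollary 3.1 (formula (3.11)) in \cite{W2022b}, which expresses $\theta\cdot\Phi^{[m,\frac12]}(2\tau,2z_1,2z_2,0)$ for \emph{arbitrary} $(z_1,z_2)$ as a ``principal'' theta‑quotient part plus a double‑sum correction, and simply substitutes $(2z_1,2z_2)=(z+\frac{\tau}{2}-\frac12+2p\tau,\,z-\frac{\tau}{2}+\frac12-2p\tau)$ for part 1) and the same with $2z_2$ shifted by $2\tau$ for part 2). All of the delicate bookkeeping you flag as the ``main obstacle'' --- the $q$-exponents, the phases $e^{\frac{\pi i}{2}(4mr\pm k)}$, the asymmetric ranges $r=1,\dots,j$ versus $r=0,\dots,j-1$, the parity constraint on $k$ --- is already encoded once and for all in that cited formula, so nothing remains to be verified beyond the substitution. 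Your plan, by contrast, proposes to rederive that general identity from the Appell‑type definition of $\Phi^{[m,\frac12]}$; this is structurally the right idea (it is essentially how such decompositions are obtained in the first place), but it amounts to reproving the reference rather than using it.

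As a standalone proof, what you have written is incomplete: Step 3 is precisely where the content of the lemma lives, and you explicitly defer it (``once this bookkeeping is carried out\dots'') rather than execute it. Until the exponents, phases and summation ranges are actually matched, the identity is not established. There is also a concrete slip in Step 2: with the substituted middle arguments one has $2z_1+2z_2=2z$ in case 1) and $2z+2\tau$ in case 2), not $0$ and $2\tau$, so the claimed ``$z_1+z_2\in\zzz$'' collapse of the denominator is misstated. The feature of the substitution that actually matters is the \emph{difference} $2z_1-2z_2=(4p+1)\tau-1$ (resp.\ $(4p-1)\tau-1$ after the extra $2\tau$ shift), which places the arguments on the pole lattice of the Appell denominator and is what produces the $\vartheta_{10}(2\tau,\cdot)$ quotients via Notes \ref{note:2022-626c} and \ref{note:2022-626d}. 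I would recommend either carrying out Step 3 in full or, much more economically, doing what the paper does and quoting formula (3.11) of \cite{W2022b} directly.
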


\begin{proof} 
This lemma follows from Corollary 3.1 in \cite{W2022b} 
by easy calculation. Namely 1) and 2) are obtained from the formula 
(3.11) in \cite{W2022b} by letting 
$(2z_1, 2z_2) =(z+\frac{\tau}{2}-\frac12+2p\tau, \, 
z-\frac{\tau}{2}+\frac12-2p\tau)$ and 
$(2z_1, 2z_2) =(z+\frac{\tau}{2}-\frac12+2p\tau, \, 
z-\frac{\tau}{2}+\frac12-2p\tau+2\tau)$ respectively.
\end{proof}

Replacing $m$ with $\frac{m}{2}$ in the above lemma, 
we obtain the following:

\begin{lemma} 
\label{lemma:2022-626b}
For $m \in \nnn$ and $p \in \zzz$, the following formulas hold:
\begin{enumerate}
\item[{\rm 1)}] \,\ $\theta_{0, m+1}
\Big(\tau, \, - \, \dfrac12+\dfrac{m(4p-1)\tau}{2(m+1)}\Big) \, 
\Phi^{[\frac{m}{2}, \frac12]}
\Big(2\tau, \, z+\dfrac{\tau}{2}-\dfrac12+2p\tau, \, 
z-\dfrac{\tau}{2}+\dfrac12-2p\tau+2\tau, \, 0\Big)$
{\allowdisplaybreaks
\begin{eqnarray*}
&=&
-i \, \eta(2\tau)^3 \, \Bigg\{ \, 
\frac{\displaystyle 
\theta_{0, m+1}\Big(\tau, \, \frac{(4p-1)\tau}{2(m+1)}+z+\tau\Big)
}{\vartheta_{10}(2\tau, \, z+\frac{\tau}{2}+2p\tau)} 
\, - \, 
\frac{\displaystyle 
\theta_{0, m+1}\Big(\tau, \, \frac{(4p-1)\tau}{2(m+1)}-z-\tau\Big)
}{\vartheta_{10}(2\tau, \, z-\frac{\tau}{2}-2p\tau+2\tau)} \Bigg\}
\\[2mm]
& & \hspace{-5mm}
+ \,\ q^{-\frac{m}{4}} e^{-\pi imz} \, 
\sum_{j=1}^{\infty} \sum_{r=1}^j 
\sum_{\substack{k=1 \\[1mm] k \, : \, {\rm odd}}}^{m-1}
(-1)^j \, q^{j^2-\frac{1}{4m}(2mr-k)^2} %
\nonumber
\\[2mm]
& & \times \big\{
q^{(j+p-\frac14) (2mr-k)}e^{\frac{\pi i}{2}(2mr+k)}
\, + \, 
q^{(j-p+\frac14) (2mr-k)}e^{\frac{\pi i}{2}(2mr-k)}
\big\}
\big[\theta_{k+m,m}-\theta_{-(k+m),m}](\tau, z)
\nonumber
\\[2mm]
& & \hspace{-5mm}
- \,\ q^{-\frac{m}{4}} e^{-\pi imz} \, 
\sum_{j=1}^{\infty} \sum_{r=0}^{j-1} 
\sum_{\substack{k=1 \\[1mm] k \, : \, {\rm odd}}}^{m-1}
(-1)^j \, q^{j^2-\frac{1}{4m}(2mr+k)^2} 
\nonumber
\\[2mm]
& & \times \big\{
q^{(j+p-\frac14) (2mr+k)}e^{\frac{\pi i}{2}(2mr-k)}
\, + \, 
q^{(j-p+\frac14) (2mr+k)}e^{\frac{\pi i}{2}(2mr+k)}
\big\}
\big[\theta_{k+m,m}-\theta_{-(k+m),m}](\tau, z)
\end{eqnarray*}}

\item[{\rm 2)}] $\theta_{0, m+1}
\Big(\tau, \, - \dfrac12+\dfrac{m(4p+1)\tau}{2(m+1)}\Big) \, 
\Phi^{[\frac{m}{2}, \frac12]}
\Big(2\tau, \, z+\dfrac{\tau}{2}-\dfrac12+2p\tau, \, 
z-\dfrac{\tau}{2}+\dfrac12-2p\tau, \, 0\Big)$
{\allowdisplaybreaks
\begin{eqnarray*}
&=&
-i \, \eta(2\tau)^3 \, \Bigg\{ \, 
\frac{\displaystyle 
\theta_{0, m+1}\Big(\tau, \, \frac{(4p+1)\tau}{2(m+1)}+z\Big)
}{\vartheta_{10}(2\tau, \, z+\frac{\tau}{2}+2p\tau)} 
\, - \, 
\frac{\displaystyle 
\theta_{0, m+1}\Big(\tau, \, \frac{(4p+1)\tau}{2(m+1)}-z\Big)
}{\vartheta_{10}(2\tau, \, z-\frac{\tau}{2}-2p\tau)} \Bigg\}
\\[2mm]
& & \hspace{-5mm}
+ \, \sum_{j=1}^{\infty} \sum_{r=1}^j 
\sum_{\substack{k=1 \\[1mm] k \, : \, {\rm odd}}}^{m-1}
(-1)^j \, q^{j^2-\frac{1}{4m}(2mr-k)^2} %
\nonumber
\\[2mm]
& & \times \, \big\{
q^{(j+p+\frac14) (2mr-k)}e^{\frac{\pi i}{2}(2mr+k)}
\, + \, 
q^{(j-p-\frac14) (2mr-k)}e^{\frac{\pi i}{2}(2mr-k)}
\big\}
\big[\theta_{k,m}-\theta_{-k,m}](\tau, z)
\nonumber
\\[2mm]
& & \hspace{-5mm}
- \, \sum_{j=1}^{\infty} \sum_{r=0}^{j-1} 
\sum_{\substack{k=1 \\[1mm] k \, : \, {\rm odd}}}^{m-1}
(-1)^j \, q^{j^2-\frac{1}{4m}(2mr+k)^2} 
\nonumber
\\[2mm]
& & \times \big\{
q^{(j+p+\frac14) (2mr+k)}e^{\frac{\pi i}{2}(2mr-k)}
\, + \, 
q^{(j-p-\frac14) (2mr+k)}e^{\frac{\pi i}{2}(2mr+k)}
\big\}
\big[\theta_{k,m}-\theta_{-k,m}](\tau, z)
\end{eqnarray*}}
\end{enumerate}
\end{lemma}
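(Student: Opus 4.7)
The plan is to obtain Lemma~\ref{lemma:2022-626b} directly from Lemma~\ref{lemma:2022-626a} by the substitution $m \mapsto m/2$. Since Lemma~\ref{lemma:2022-626a} is stated for $m \in \frac{1}{2}\nnn$, every positive half-integer is a legitimate value of the parameter; writing the original $m$ as $m'/2$ with $m' \in \nnn$ and then relabeling $m'$ back to $m$, Parts~2 and~1 of Lemma~\ref{lemma:2022-626a} become Parts~1 and~2 of Lemma~\ref{lemma:2022-626b} respectively. No new analytic input is required beyond what is already encapsulated in Corollary~3.1 of \cite{W2022b} that drives Lemma~\ref{lemma:2022-626a}.

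First I would verify the substitution on each ingredient. The level $2m+1$ of the outer theta-function becomes $m+1$, and correspondingly the elliptic shift $\frac{m(4p\pm 1)\tau}{2m+1}$ becomes $\frac{m(4p\pm 1)\tau}{2(m+1)}$. The $\Phi$-function $\Phi^{[m,1/2]}$ becomes $\Phi^{[m/2,1/2]}$. In the final theta-functions $\theta_{k+2m,2m}$ and $\theta_{k,2m}$, the level $2m$ becomes $m$, giving $\theta_{k+m,m}$ and $\theta_{k,m}$; the upper limit $2m-1$ of the odd-$k$ summation becomes $m-1$. The overall prefactors $q^{-m/2}e^{-2\pi imz}$ appearing in Part~2 of Lemma~\ref{lemma:2022-626a} become $q^{-m/4}e^{-\pi imz}$. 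In the quadratic exponent, $(4mr\pm k)^2/(8m)$ becomes $(2mr\pm k)^2/(4m)$, and the linear factors $(4mr\pm k)$ multiplying $(j\pm p\pm\tfrac14)$ in the exponent of $q$, as well as the phase factor $e^{\pi i(4mr\pm k)/2}$, become $(2mr\pm k)$ and $e^{\pi i(2mr\pm k)/2}$.

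Once these transformations are collected, the two stated identities fall out directly. The only point worth noting is that for $m=1$ the odd-$k$ sum over $\{1,\ldots,m-1\}$ is empty, in which case only the leading quotient of $\theta_{0,m+1}$ by $\vartheta_{10}$ survives, which is consistent with the statement. There is no substantive obstacle in this argument: the proof is pure bookkeeping, and the only care required is to track the factor of two consistently through every index, level and exponent.
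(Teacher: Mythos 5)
Your proposal is correct and coincides with the paper's own derivation: the text introducing Lemma \ref{lemma:2022-626b} states precisely that it is obtained by replacing $m$ with $\frac{m}{2}$ in Lemma \ref{lemma:2022-626a}, and your bookkeeping of the levels, exponents, prefactors and the swap of parts 1) and 2) is accurate.
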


Using Note \ref{note:2022-626a} and Note \ref{note:2022-626d}, the 
formulas in the above lemma are rewritten as follows:

\begin{lemma} \quad 
\label{lemma:2022-627a}
For $m \in \nnn$ and $p \in \zzz$, the following formulas hold:
\begin{subequations}
\begin{enumerate}
\item[{\rm 1)}] \,\ $
\theta_{m(2p+\frac12), \, m+1}^{(\pm)}(\tau,0) \, 
\Phi^{[\frac{m}{2}, \frac12]}
\Big(2\tau, \,\ z+\dfrac{\tau}{2}-\dfrac12+2p\tau, \,\ 
z-\dfrac{\tau}{2}+\dfrac12-2p\tau, \,\ 0\Big)$
{\allowdisplaybreaks
\begin{eqnarray}
&=&
-i \, q^{m(p+\frac14)^2} \, 
\eta(2\tau)^3 \, \Bigg\{
\frac{\theta_{2p+\frac12, \, m+1}(\tau, z)}{\theta_{-\frac12,1}(\tau,z)}
\, - \, 
\frac{\theta_{-2p-\frac12, \, m+1}(\tau, z)}{\theta_{\frac12,1}(\tau,z)}
\Bigg\}
\nonumber
\\[2mm]
& & \hspace{-5mm}
+ \,\ q^{\frac{m^2}{m+1}(p+\frac14)^2} \, 
\sum_{j=1}^{\infty} \sum_{r=1}^j 
\sum_{\substack{k=1 \\[1mm] k \, : \, {\rm odd}}}^{m-1}
(-1)^j \, q^{j^2-\frac{1}{4m}(2mr-k)^2} %
\nonumber
\\[2mm]
& & \times \, \big\{
q^{(j+p+\frac14) (2mr-k)}e^{\frac{\pi i}{2}(2mr+k)}
\, + \, 
q^{(j-p-\frac14) (2mr-k)}e^{\frac{\pi i}{2}(2mr-k)}
\big\}
\big[\theta_{k,m}-\theta_{-k,m}](\tau, z)
\nonumber
\\[2mm]
& & \hspace{-5mm}
- \,\ q^{\frac{m^2}{m+1}(p+\frac14)^2} \, 
\sum_{j=1}^{\infty} \sum_{r=0}^{j-1} 
\sum_{\substack{k=1 \\[1mm] k \, : \, {\rm odd}}}^{m-1}
(-1)^j \, q^{j^2-\frac{1}{4m}(2mr+k)^2} 
\nonumber
\\[2mm]
& & \hspace{-10mm}
\times \big\{
q^{(j+p+\frac14) (2mr+k)}e^{\frac{\pi i}{2}(2mr-k)}
\, + \, 
q^{(j-p-\frac14) (2mr+k)}e^{\frac{\pi i}{2}(2mr+k)}
\big\}
\big[\theta_{k,m}-\theta_{-k,m}](\tau, z)
\label{eqn:2022-627a}
\end{eqnarray}}
\item[{\rm 2)}] \,\ $\theta_{m(2p-\frac12),m+1}^{(\pm)}(\tau,0) \, 
\Phi^{[\frac{m}{2}, \frac12]}
\Big(2\tau, \,\ z+\dfrac{\tau}{2}-\dfrac12+2p\tau, \,\ 
z-\dfrac{\tau}{2}+\dfrac12-2p\tau+2\tau, \,\ 0\Big)$
{\allowdisplaybreaks
\begin{eqnarray}
&=&
- \, i \,\ q^{m(p-\frac14)^2} \, q^{-\frac{m}{4}} \, 
e^{-\pi imz} \, \eta(2\tau)^3 \, 
\bigg\{
\frac{\theta_{2p-\frac12+m+1, m+1}(\tau, z)}{\theta_{-\frac12,1}(\tau, z)} 
\, - \, 
\frac{\theta_{-(2p-\frac12+m+1), m+1}(\tau, z)}{\theta_{\frac12,1}(\tau, z)}
\bigg\}
\nonumber
\\[2mm]
& & \hspace{-5mm}
+ \,\ q^{\frac{m^2}{m+1}(p-\frac14)^2} \, q^{-\frac{m}{4}} \, 
e^{-\pi imz} \, 
\sum_{j=1}^{\infty} \sum_{r=1}^j 
\sum_{\substack{k=1 \\[1mm] k \, : \, {\rm odd}}}^{m-1}
(-1)^j \, q^{j^2-\frac{1}{4m}(2mr-k)^2} %
\nonumber
\\[2mm]
& & \times \big\{
q^{(j+p-\frac14) (2mr-k)}e^{\frac{\pi i}{2}(2mr+k)}
\, + \, 
q^{(j-p+\frac14) (2mr-k)}e^{\frac{\pi i}{2}(2mr-k)}
\big\}
\big[\theta_{k+m,m}-\theta_{-(k+m),m}](\tau, z)
\nonumber
\\[2mm]
& & \hspace{-5mm}
- \,\ q^{\frac{m^2}{m+1}(p-\frac14)^2} \, q^{-\frac{m}{4}} \, 
e^{-\pi imz} \, 
\sum_{j=1}^{\infty} \sum_{r=0}^{j-1} 
\sum_{\substack{k=1 \\[1mm] k \, : \, {\rm odd}}}^{m-1}
(-1)^j \, q^{j^2-\frac{1}{4m}(2mr+k)^2} 
\nonumber
\\[2mm]
& & \times \big\{
q^{(j+p-\frac14) (2mr+k)}e^{\frac{\pi i}{2}(2mr-k)}
\, + \, 
q^{(j-p+\frac14) (2mr+k)}e^{\frac{\pi i}{2}(2mr+k)}
\big\}
\big[\theta_{k+m,m}-\theta_{-(k+m),m}](\tau, z)
\nonumber
\\[0mm]
& &
\label{eqn:2022-630a}
\end{eqnarray}}
where $\pm$ means that $+$ if $m \in \nnn_{\rm odd}$ and $-$ if $m \in \nnn_{\rm even}$.
\end{enumerate}
\end{subequations}
\end{lemma}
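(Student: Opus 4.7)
The plan is to derive Lemma \ref{lemma:2022-627a} directly from Lemma \ref{lemma:2022-626b} by a substitution using Note \ref{note:2022-626a} and Note \ref{note:2022-626d}. Matching the arguments, part~1) of the target corresponds to part~2) of Lemma \ref{lemma:2022-626b}, while part~2) of the target corresponds to part~1) of Lemma \ref{lemma:2022-626b}.

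The first step is to rewrite the coefficient of the $\Phi^{[m/2,\frac12]}$-factor on the left-hand side. Note \ref{note:2022-626a} part~1) reads, with the sign $\pm$ determined by the parity of $m$,
\[
\theta_{0,\, m+1}\!\Bigl(\tau,\, -\tfrac12 + \tfrac{m(4p\pm 1)\tau}{2(m+1)}\Bigr) = q^{-\frac{m^2}{m+1}(p\pm\frac14)^2}\, \theta^{(\pm)}_{m(2p\pm\frac12),\, m+1}(\tau,0).
\]
Multiplying both sides of Lemma \ref{lemma:2022-626b} by $q^{\frac{m^2}{m+1}(p\pm\frac14)^2}$ therefore puts $\theta^{(\pm)}_{m(2p\pm\frac12),\,m+1}(\tau,0)$ on the left-hand side and leaves this factor as an overall prefactor on the summation terms on the right, exactly as stated in the target lemma.

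The second step is to rewrite the four $\theta/\vartheta_{10}$-quotients inside the $\eta(2\tau)^3$-brace on the right-hand side of Lemma \ref{lemma:2022-626b} using Note \ref{note:2022-626d}. A typical instance is
\[
\frac{\theta_{0, m+1}\!\bigl(\tau,\, \tfrac{4p+1}{2(m+1)}\tau + z\bigr)}{\vartheta_{10}\!\bigl(2\tau,\, z+\tfrac{\tau}{2}+2p\tau\bigr)} = q^{\frac{m}{m+1}(p+\frac14)^2}\, \frac{\theta_{2p+\frac12,\, m+1}(\tau,z)}{\theta_{-\frac12,1}(\tau,z)},
\]
with the three remaining ratios handled in the same way by Note \ref{note:2022-626d}. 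Multiplying by the overall $q^{\frac{m^2}{m+1}(p+\frac14)^2}$ from the first step gives $q^{\frac{m^2+m}{m+1}(p+\frac14)^2} = q^{m(p+\frac14)^2}$, which is precisely the scalar in front of $\eta(2\tau)^3$ in part~1) of the target. For part~2), Note \ref{note:2022-626d} part~2) is identical except for an additional $q^{-m/4}e^{-\pi i m z}$ factor, reproducing the extra scalar appearing there.

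No genuine obstacle arises: the proof is pure substitution combined with careful bookkeeping of $q$-exponents and phase factors. The step that merits the most attention is aligning the four variants $\pm z$ and $\pm z \mp \tau$ inside Lemma \ref{lemma:2022-626b} with the matching clauses of Note \ref{note:2022-626d}, and checking that the factor $e^{\pi i m(p\pm \frac14)}$ from Note \ref{note:2022-626a} is absorbed correctly into the parity-dependent definition of $\theta^{(\pm)}_{m(2p\pm\frac12),\, m+1}(\tau,0)$.
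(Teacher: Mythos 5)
Your proposal is correct and matches the paper's own (very terse) justification: the paper simply states that Lemma \ref{lemma:2022-627a} is obtained by rewriting Lemma \ref{lemma:2022-626b} via Note \ref{note:2022-626a} and Note \ref{note:2022-626d}, and you have carried out exactly that substitution, including the correct pairing of part 1) of the target with part 2) of Lemma \ref{lemma:2022-626b} (and vice versa) and the exponent bookkeeping $q^{\frac{m^2}{m+1}(p\pm\frac14)^2}\cdot q^{\frac{m}{m+1}(p\pm\frac14)^2}=q^{m(p\pm\frac14)^2}$.
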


\begin{lemma} \,\
\label{lemma:2022-630b}
Let $m \in \nnn_{\rm odd}$ and $p \in \zzz$, then 
{\allowdisplaybreaks
\begin{eqnarray}
& & \hspace{-10mm}
\theta_{m(2p-\frac12),m+1}(\tau,0) \, 
\Phi^{[\frac{m}{2},0]}\Big(2\tau, \,\ z+\frac{\tau}{2}-\frac12+2p\tau, \,\ 
z-\frac{\tau}{2}+\frac12-2p\tau, \,\ 0\Big) 
\nonumber
\\[3mm]
&=&
- \, i \,\ e^{-\frac{\pi im}{2}} \, q^{m(p+\frac14)^2} \, 
\eta(2\tau)^3 \, \Bigg[
\frac{\theta_{2p-\frac12+m+1, m+1}(\tau, z)}{\theta_{-\frac12,1}(\tau, z)} 
\, - \, 
\frac{\theta_{-(2p-\frac12+m+1), m+1}(\tau, z)}{\theta_{\frac12,1}(\tau, z)}
\Bigg]
\nonumber
\\[2mm]
& & \hspace{-5mm}
+ \,\ 
e^{-\frac{\pi im}{2}} \, q^{mp} \, 
q^{\frac{m^2}{m+1}(p-\frac14)^2} \, 
\sum_{j=1}^{\infty} \sum_{r=1}^j 
\sum_{\substack{k=1 \\[1mm] k \, : \, {\rm odd}}}^{m-1}
(-1)^j \, q^{j^2-\frac{1}{4m}(2mr-k)^2} 
\nonumber
\\[2mm]
& & \times \big\{
q^{(j+p-\frac14) (2mr-k)}e^{\frac{\pi i}{2}(2mr+k)}
\, + \, 
q^{(j-p+\frac14) (2mr-k)}e^{\frac{\pi i}{2}(2mr-k)}
\big\}
\big[\theta_{k+m,m}-\theta_{-(k+m),m}](\tau, z)
\nonumber
\\[2mm]
& & \hspace{-5mm}
- \,\ 
e^{-\frac{\pi im}{2}} \, q^{mp} \, 
q^{\frac{m^2}{m+1}(p-\frac14)^2} \, 
\sum_{j=1}^{\infty} \sum_{r=0}^{j-1} 
\sum_{\substack{k=1 \\[1mm] k \, : \, {\rm odd}}}^{m-1}
(-1)^j \, q^{j^2-\frac{1}{4m}(2mr+k)^2} 
\nonumber
\\[2mm]
& & \times \big\{
q^{(j+p-\frac14) (2mr+k)}e^{\frac{\pi i}{2}(2mr-k)}
\, + \, 
q^{(j-p+\frac14) (2mr+k)}e^{\frac{\pi i}{2}(2mr+k)}
\big\}
\big[\theta_{k+m,m}-\theta_{-(k+m),m}](\tau, z)
\nonumber
\\[3mm]
& & \hspace{-3mm}
+ \,\ \theta_{m(2p-\frac12),m+1}(\tau,0)
\sum_{k=1}^{\frac{m-1}{2}} 
(-1)^kq^{2k(p+\frac14)} \, q^{-\frac{k^2}{m}} \, 
\big[\theta_{2k,m}- \theta_{-2k,m}\big](\tau,z)
\label{eqn:2022-630b}
\end{eqnarray}}
\end{lemma}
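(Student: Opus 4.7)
The plan is to mimic the derivation of Lemma~\ref{lemma:2022-627a}~2) starting from the $\ell=0$ specialization of formula~(3.11) in \cite{W2022b} rather than its $\ell=\tfrac12$ specialization, and then to absorb the difference between the two into the extra finite sum on the last line of \eqref{eqn:2022-630b}.

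First I would apply (3.11) of \cite{W2022b} with $\ell=0$, $m$ replaced by $m/2$, and the substitution $(2z_1,2z_2)=(z+\tfrac{\tau}{2}-\tfrac12+2p\tau,\,z-\tfrac{\tau}{2}+\tfrac12-2p\tau)$. This produces, on the left, a factor $\theta_{0,m+1}(\tau,-\tfrac12+\tfrac{m(4p-1)\tau}{2(m+1)})$ multiplying $\Phi^{[m/2,0]}$ at the required arguments, and, on the right, two $\vartheta_{10}$-quotient terms together with double sums of the same general shape as in Lemma~\ref{lemma:2022-626b}~1), but with an altered range of the inner index $k$ reflecting the parity change from $\ell=\tfrac12$ to $\ell=0$.

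Next I would invoke Note~\ref{note:2022-626a}~1)(ii) to identify the left-hand factor as $q^{-\frac{m^2}{m+1}(p-\frac14)^2}\theta_{m(2p-\frac12),m+1}^{(+)}(\tau,0)$, where the superscript is $(+)$ because $m$ is odd, and divide through. Converting between $\theta^{(+)}$ and the plain $\theta_{m(2p-\frac12),m+1}(\tau,0)$ is the source of the phase $e^{-\pi i m/2}$ and of the $q^{mp}$ prefactor that appear throughout \eqref{eqn:2022-630b}; the remaining $q^{m/4}e^{\pi i mz}$ absorbs the factor one would otherwise carry from the $2\tau$-shift in the third argument of $\Phi^{[m/2,\cdot]}$ which distinguishes the present setup from that of Lemma~\ref{lemma:2022-627a}~2). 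Applying Note~\ref{note:2022-626d}~2)(i)(ii) to the $\vartheta_{10}$-quotients then rewrites them into the $\theta_{2p-\frac12+m+1,m+1}/\theta_{\mp\frac12,1}$ shape appearing in the first line of the claim, and the two $(j,r,k)$-double sums line up with those in \eqref{eqn:2022-630b} once the parity of $k$ is re-indexed.

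The main obstacle is the finite extra sum
\[
\theta_{m(2p-\frac12),m+1}(\tau,0)\sum_{k=1}^{(m-1)/2}(-1)^k q^{2k(p+\frac14)}q^{-k^2/m}\big[\theta_{2k,m}-\theta_{-2k,m}\big](\tau,z).
\]
Its origin is the change in the residue classes of the inner summation index when $\ell$ is switched from $\tfrac12$ (odd $k\in[1,m-1]$) to $0$ (even $k\in[0,m-1]$): the $(m-1)/2$ even values $k=2,4,\ldots,m-1$ reindex as $2k'$ with $1\le k'\le(m-1)/2$, and the Gaussian weights in (3.11) evaluate at these points to $q^{2k'(p+\frac14)-k'^2/m}$ with sign $(-1)^{k'}$, producing exactly the displayed finite sum. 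Verifying that these boundary contributions assemble correctly, with the common $\theta_{m(2p-\frac12),m+1}(\tau,0)$ prefactor extracted and with the signs and $q$-exponents matching the claim, is the step that requires the most careful bookkeeping; everything else is parallel to the proof of Lemma~\ref{lemma:2022-627a}~2).
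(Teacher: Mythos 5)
Your proposal has the right global shape (reduce to the $s=\tfrac12$ machinery and account for a finite correction), but it hinges on an object that is not available: an ``$\ell=0$ specialization'' of formula (3.11) of \cite{W2022b}. That formula is only ever invoked here for $\Phi^{[\cdot,\frac12]}$; the paper handles $\Phi^{[\frac{m}{2},0]}$ by a different device, namely Lemma 3.2 of \cite{W2022b} (equation \eqref{n3:eqn:2022-705a}), which writes
$\Phi^{[\frac{m}{2},0]}(2\tau,2z_1,2z_2,0)=e^{2\pi imz_1}\,\Phi^{[\frac{m}{2},\frac12]}(2\tau,2z_1,2z_2+2\tau,0)+\sum_{k=1}^{(m-1)/2}e^{2\pi ik(z_1-z_2)}q^{-k^2/m}\bigl[\theta_{k,\frac m2}-\theta_{-k,\frac m2}\bigr](2\tau,2(z_1+z_2))$
for $m$ odd. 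Specializing $(2z_1,2z_2)=(z+\frac{\tau}{2}-\frac12+2p\tau,\,z-\frac{\tau}{2}+\frac12-2p\tau)$, multiplying by $\theta_{m(2p-\frac12),m+1}(\tau,0)$, and substituting the already proven identity \eqref{eqn:2022-630a} for the shifted $\Phi^{[\frac{m}{2},\frac12]}$ factor produces \emph{in one stroke} the phase $e^{-\pi im/2}$, the $q^{mp}$ prefactor (via $e^{2\pi imz_1}=e^{-\pi im/2}q^{m(p+\frac14)}e^{\pi imz}$, which also converts $q^{m(p-\frac14)^2}q^{-m/4}$ into $q^{m(p+\frac14)^2}$), and the finite sum on the last line. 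Your proposal attributes the phase and $q^{mp}$ to a conversion between $\theta^{(+)}$ and $\theta$, and the finite sum to a re-indexing of even versus odd inner summation indices inside a hypothetical $s=0$ expansion; neither mechanism is what actually operates, and neither is substantiated.

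Two concrete symptoms of the gap. First, with your chosen substitution the $s=\tfrac12$ machinery (Lemma \ref{lemma:2022-626b}~2), Note \ref{note:2022-626a}~1)(i)) produces the prefactor $\theta_{0,m+1}\bigl(\tau,-\frac12+\frac{m(4p+1)\tau}{2(m+1)}\bigr)\sim\theta^{(+)}_{m(2p+\frac12),m+1}(\tau,0)$, not the $\theta_{m(2p-\frac12),m+1}(\tau,0)$ appearing in the statement; the ``$4p-1$'' labels arise only through the $2\tau$-shift of the third argument, which your route never introduces. Second, the double sums in \eqref{eqn:2022-630b} are \emph{verbatim} those of \eqref{eqn:2022-630a} — odd $k$ with $\bigl[\theta_{k+m,m}-\theta_{-(k+m),m}\bigr]$ and exponents in $2mr\pm k$ — so they cannot be obtained by ``re-indexing the parity of $k$'' in an even-index expansion; they come from quoting \eqref{eqn:2022-630a} unchanged, with the finite sum entering as a separate additive correction carrying the untouched $\theta_{m(2p-\frac12),m+1}(\tau,0)$ prefactor. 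To repair the argument you should replace the putative $\ell=0$ version of (3.11) by the additive relation \eqref{n3:eqn:2022-705a} and then follow the substitution just described.
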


\begin{proof} By Lemma 3.2 in \cite{W2022b}, we have the following 
formula for $m \in \nnn_{\rm odd}$:
{\allowdisplaybreaks
\begin{eqnarray}
& & \hspace{-20mm}
\Phi^{[\frac{m}{2},0]}(2\tau, \, 2z_1, \, 2z_2, \, 0) 
\,\ = \,\ 
e^{2\pi imz_1} \, 
\Phi^{[\frac{m}{2},\frac12]}(2\tau, \, 2z_1, \, 2z_2+2\tau, \, 0) 
\nonumber
\\[2mm]
& &
+ \,\ \sum_{k=1}^{\frac{m-1}{2}} 
e^{2\pi ik(z_1-z_2)} \, q^{-\frac{k^2}{m}} \, 
\big[\theta_{k, \frac{m}{2}}-\theta_{-k, \frac{m}{2}}\big]
(2\tau, 2(z_1+z_2))
\label{n3:eqn:2022-705a}
\end{eqnarray}}

\noindent
Letting $(2z_1, 2z_2)=(z+\frac{\tau}{2}-\frac12+2p\tau, \, 
z-\frac{\tau}{2}+\frac12-2p\tau)$ in this formula 
\eqref{n3:eqn:2022-705a}, we have
{\allowdisplaybreaks
\begin{eqnarray}
& & \hspace{-10mm}
\Phi^{[\frac{m}{2},0]}\Big(2\tau, \,\ z+\frac{\tau}{2}-\frac12+2p\tau, \,\ 
z-\frac{\tau}{2}+\frac12-2p\tau, \,\ 0\Big) 
\nonumber
\\[3mm]
&=&
e^{-\frac{\pi im}{2}}q^{m(p+\frac14)}e^{\pi imz} \, 
\Phi^{[\frac{m}{2},\frac12]}\Big(2\tau, \, 
z+\frac{\tau}{2}-\frac12+2p\tau, \,\ 
z-\frac{\tau}{2}+\frac12-2p\tau+2\tau, \,\ 0\Big) 
\nonumber
\\[2mm]
& &
+ \,\ \sum_{k=1}^{\frac{m-1}{2}} 
(-1)^kq^{2k(p+\frac14)} \, q^{-\frac{k^2}{m}} \, 
\big[\theta_{k, \frac{m}{2}}-\theta_{-k, \frac{m}{2}}\big]
(2\tau, 2z)
\label{n3:eqn:2022-705b}
\end{eqnarray}}

\noindent
Multiplying \, $\theta_{m(2p-\frac12),m+1}(\tau,0)$ \, 
and substituting \eqref{eqn:2022-630a} into \eqref{n3:eqn:2022-705b},
we obtain the formula \eqref{eqn:2022-630b}, proving Lemma 
\ref{lemma:2022-630b}.
\end{proof}

\begin{lemma} 
\label{lemma:2022-627b}
Let $m \in \nnn$ and $p \in \zzz_{\geq 0}$, then 
\begin{subequations}
\begin{enumerate}
\item[{\rm 1)}] \,\ $\Phi^{[\frac{m}{2}, \frac12]}
\Big(2\tau, \,\ z+\dfrac{\tau}{2}-\dfrac12+2p\tau, \,\ 
z-\dfrac{\tau}{2}+\dfrac12-2p\tau, \,\ 0\Big)$
{\allowdisplaybreaks
\begin{eqnarray}
&=&
(-1)^{mp} \, q^{m(p+\frac14)^2-\frac{m}{16}} \, 
\Phi^{[\frac{m}{2},\frac12]}\Big(2\tau, \,\ 
z+\frac{\tau}{2}-\frac12, \,\ 
z-\frac{\tau}{2}+\frac12, \,\ 0\Big)
\nonumber
\\[2mm]
& & \hspace{-7mm}
+ \, i \, (-1)^{mp} \, q^{m(p+\frac14)^2} 
\sum_{k=1}^{pm} (-1)^k 
q^{-\frac{1}{m}(k-\frac12+\frac{m}{4})^2} 
\big[\theta_{2k-1,m}-\theta_{-(2k-1),m}\big](\tau,z)
\label{eqn:2022-627b}
\end{eqnarray}}
\item[{\rm 2)}] \,\ $\Phi^{[\frac{m}{2}, 0]}
\Big(2\tau, \,\ z+\dfrac{\tau}{2}-\dfrac12+2p\tau, \,\ 
z-\dfrac{\tau}{2}+\dfrac12-2p\tau, \,\ 0\Big)$ 
{\allowdisplaybreaks
\begin{eqnarray}
&=&
(-1)^{mp} \, q^{m(p+\frac14)^2-\frac{m}{16}} \, 
\Phi^{[\frac{m}{2},0]}\Big(2\tau, \,\ 
z+\frac{\tau}{2}-\frac12, \,\ 
z-\frac{\tau}{2}+\frac12, \,\ 0\Big)
\nonumber
\\[2mm]
& & \hspace{-7mm}
- \, (-1)^{mp} \, q^{m(p+\frac14)^2}
\sum_{k=1}^{pm} (-1)^k \, 
q^{-\frac{1}{m}(k+\frac{m}{4})^2} 
\big[\theta_{2k,m}-\theta_{-2k,m}\big](\tau,z)
\label{eqn:2022-630c}
\end{eqnarray}}
\end{enumerate}
\end{subequations}
\end{lemma}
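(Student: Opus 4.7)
The plan is to prove both (1) and (2) by induction on $p \geq 0$. The base case $p=0$ is a tautology: the prefactor $(-1)^0 \, q^{m(1/4)^2 - m/16} = 1$, and the correction sum $\sum_{k=1}^{0}$ is empty, so both identities reduce to $\Phi^{[\frac{m}{2}, s]}(\cdots) = \Phi^{[\frac{m}{2}, s]}(\cdots)$ with identical arguments.

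For the inductive step from $p$ to $p+1$, the key ingredient is a one-step shift identity for $\Phi^{[\frac{m}{2},s]}$, expressing
$\Phi^{[\frac{m}{2},s]}(2\tau,(2z_1)+2\tau,(2z_2)-2\tau,0)$
in terms of $\Phi^{[\frac{m}{2},s]}(2\tau,2z_1,2z_2,0)$ plus $m$ explicit residual theta terms. Such an identity should be read off from the series definition of $\Phi^{[\frac{m}{2},s]}$ in \cite{W2022b} by reindexing the summation variable so as to absorb the translation $2\tau$ in one slot and $-2\tau$ in the other; this produces a position-dependent multiplicative factor together with $m$ boundary contributions. When this one-step identity is applied at the special argument $(2z_1,2z_2)=(z+\frac{\tau}{2}-\frac12+2p\tau,\, z-\frac{\tau}{2}+\frac12-2p\tau)$, the multiplicative factor is $(-1)^{m}q^{2mp+3m/2}$, which combines with the inductive prefactor $(-1)^{mp}q^{m(p+1/4)^2-m/16}$ to give exactly the claimed $(-1)^{m(p+1)}q^{m(p+5/4)^2-m/16}$. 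The $m$ boundary theta terms generated at this step occupy precisely the indices $k=mp+1,\ldots, m(p+1)$, with $(-1)^{k}$ signs and Gaussian weights matching those on the right-hand side of \eqref{eqn:2022-627b}, respectively \eqref{eqn:2022-630c}.

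Part (2) can be carried out by an entirely parallel induction on $p$, or alternatively deduced from part (1) together with the $s=0$ versus $s=\tfrac12$ relation \eqref{n3:eqn:2022-705a} already used in the proof of Lemma \ref{lemma:2022-630b}: substituting the formula of part (1) on the right-hand side of \eqref{n3:eqn:2022-705a} after setting $(2z_1,2z_2)=(z+\frac{\tau}{2}-\frac12+2p\tau,\, z-\frac{\tau}{2}+\frac12-2p\tau)$ produces \eqref{eqn:2022-630c}, the shift from odd indices $2k-1$ in \eqref{eqn:2022-627b} to even indices $2k$ in \eqref{eqn:2022-630c} being accounted for by the factor $e^{2\pi imz_1}q^{m(p+\frac14)}e^{\pi imz}$ and the finite geometric sum $\sum_{k=1}^{(m-1)/2}$ in \eqref{n3:eqn:2022-705a}.

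\emph{Main obstacle.} The principal technical difficulty is to establish the one-step shift identity cleanly, with all signs, $q$-exponents, and indexing correct. Once the one-step identity is in hand, the induction proceeds by bookkeeping: one must verify that the prefactor ratio $(-1)^m q^{2mp+3m/2}$ telescopes correctly and that the $m$ new correction terms at the $p$-th step slot precisely into the range $k=m(p-1)+1,\ldots,mp$, with their weights $q^{-\frac{1}{m}(k-\frac12+\frac{m}{4})^2}$ matching after extracting the common factor $q^{m(p+1/4)^2}$.
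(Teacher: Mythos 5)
Your plan is sound and your bookkeeping is consistent with the statement: the base case $p=0$ is indeed trivial, the prefactor ratio $(-1)^m q^{2mp+3m/2}$ is exactly what is needed to telescope $(-1)^{mp}q^{m(p+\frac14)^2-\frac{m}{16}}$, and the new correction terms at each step do occupy $k=mp+1,\dots,m(p+1)$. But the proof as written has a genuine gap at precisely the point you flag as the ``main obstacle'': the one-step shift identity for $\Phi^{[\frac{m}{2},s]}$ is asserted (``should be read off from the series definition \dots by reindexing''), not established, and all of the signs, $q$-exponents and boundary terms of the final formula are inherited from it. The paper closes this gap by citation rather than by induction: formula (2.2) in Lemma 2.4 of \cite{W2022c}, after the substitution $m\mapsto \frac{m}{2}$, $\tau\mapsto 2\tau$, already gives the full $p$-step shift
$\Phi^{[\frac{m}{2},s]}(2\tau, z_1+2p\tau, z_2-2p\tau,0)
= e^{\pi i mp(z_1-z_2)}q^{mp^2}\{\Phi^{[\frac{m}{2},s]}(2\tau,z_1,z_2,0)
-\sum_{1\leq k\leq pm} e^{-\pi i(k-s)(z_1-z_2)}q^{-(k-s)^2/m}[\theta_{k-s,\frac{m}{2}}-\theta_{-(k-s),\frac{m}{2}}](2\tau,z_1+z_2)\}$,
so the entire proof reduces to setting $(z_1,z_2)=(z+\frac{\tau}{2}-\frac12,\,z-\frac{\tau}{2}+\frac12)$ (whence $z_1-z_2=\tau-1$, producing $(-1)^{mp}q^{mp^2+mp/2}=(-1)^{mp}q^{m(p+\frac14)^2-\frac{m}{16}}$) and then taking $s=\frac12$ and $s=0$. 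If you replace your unproven one-step identity by this citation, your induction becomes superfluous.

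A second, smaller problem is your alternative derivation of part (2) from part (1) via \eqref{n3:eqn:2022-705a}. That relation is only valid for $m\in\nnn_{\rm odd}$, whereas part (2) is claimed for all $m\in\nnn$; moreover it relates $\Phi^{[\frac{m}{2},0]}(2\tau,2z_1,2z_2,0)$ to $\Phi^{[\frac{m}{2},\frac12]}(2\tau,2z_1,2z_2+2\tau,0)$, and the extra $+2\tau$ in one slot destroys the symmetric $(+2p\tau,-2p\tau)$ shape to which part (1) applies, so the reduction is not as direct as you suggest. Your primary route for part (2) --- a parallel induction with $s=0$ --- is the right one, subject to the same caveat about actually proving the shift identity.
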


\begin{proof} Replacing $m$ and $\tau$ with $\frac{m}{2}$ and $2\tau$ 
in the formula (2.2) in Lemma 2.4 of \cite{W2022c}, we have
{\allowdisplaybreaks
\begin{eqnarray}
& & \hspace{-15mm}
\Phi^{[\frac{m}{2},s]}(2\tau, \, z_1+2p\tau, \, z_2-2p\tau, \, 0) 
\nonumber
\\[3mm]
& & \hspace{-12mm}
= \,\ 
e^{\pi imp(z_1-z_2)} \, q^{mp^2} \, \bigg\{
\Phi^{[\frac{m}{2},s]}(2\tau, z_1,z_2,0)
\nonumber
\\[2mm]
& & \hspace{-10mm}
- \sum_{1 \, \leqq \, k \, \leqq \, pm} \hspace{-3mm}
e^{-\pi i(k-s)(z_1-z_2)} \, 
q^{- \frac{(k-s)^2}{m}} \, 
\big[\theta_{k-s, \, \frac{m}{2}} - \theta_{-(k-s), \, \frac{m}{2}}
\big] (2\tau, z_1+z_2) \bigg\}
\label{eqn:2022-702a}
\end{eqnarray}}

\noindent
Letting $(z_1,z_2) \, = \, (z+\frac{\tau}{2}-\frac12, \, 
z-\frac{\tau}{2}+\frac12)$ in this equation \eqref{eqn:2022-702a}, 
we have
{\allowdisplaybreaks
\begin{eqnarray*}
& & \hspace{-10mm}
\Phi^{[\frac{m}{2},s]}(2\tau, \, z_1+2p\tau, \, z_2-2p\tau, \, 0) 
\\[3mm]
&=&
(-1)^{mp} \, q^{m(p+\frac14)^2-\frac{m}{16}} \, 
\Phi^{[\frac{m}{2},s]}(2\tau, z_1,z_2,0)
\\[3mm]
& &
- \,\ e^{-\pi is} \,\ (-1)^{mp} \, q^{m(p+\frac14)^2} \hspace{-2mm}
\sum_{1 \, \leqq \, k \, \leqq \, pm} \hspace{-2mm}
(-1)^k \, 
q^{-\frac{1}{m}(k-s+\frac{m}{4})^2} \, 
\big[\theta_{2k-2s,m}-\theta_{-(2k-2s),m}\big](\tau,z)
\end{eqnarray*}}
Writing this formula in the cases $s=\frac12$ and $s=0$, we obtain 
formulas \eqref{eqn:2022-627b} and \eqref{eqn:2022-630c}, proving 
Lemma \ref{lemma:2022-627b}.
\end{proof}

\begin{prop} 
\label{prop:2022-628a}
Let $m \in \nnn$ and $p \in \zzz_{\geq 0}$, then
\begin{subequations}
\begin{enumerate}
\item[{\rm 1)}] \,\ $\Phi^{[\frac{m}{2},\frac12]}\Big(2\tau, \,\ 
z+\dfrac{\tau}{2}-\dfrac12, \,\ 
z-\dfrac{\tau}{2}+\dfrac12, \,\ \dfrac{\tau}{8}\Big)$
{\allowdisplaybreaks
\begin{eqnarray}
&=&
- \, i \, (-1)^{mp} \, 
\frac{\eta(2\tau)^3}{\theta_{m(2p+\frac12), \, m+1}^{(\pm)}(\tau,0)} \, 
\Bigg\{
\frac{\theta_{2p+\frac12, \, m+1}(\tau, z)}{\theta_{-\frac12,1}(\tau,z)}
\, - \, 
\frac{\theta_{-2p-\frac12, \, m+1}(\tau, z)}{\theta_{\frac12,1}(\tau,z)}
\Bigg\}
\nonumber
\\[2mm]
& & \hspace{-5mm}
+ \,\ (-1)^{mp} \, 
\frac{q^{-\frac{m}{m+1}(p+\frac14)^2}}{
\theta_{m(2p+\frac12), \, m+1}^{(\pm)}(\tau,0)} \, 
\sum_{j=1}^{\infty} \sum_{r=1}^j 
\sum_{\substack{k=1 \\[1mm] k \, : \, {\rm odd}}}^{m-1}
(-1)^j \, q^{j^2-\frac{1}{4m}(2mr-k)^2} 
\nonumber
\\[2mm]
& & \times \, \big\{
q^{(j+p+\frac14) (2mr-k)}e^{\frac{\pi i}{2}(2mr+k)}
\, + \, 
q^{(j-p-\frac14) (2mr-k)}e^{\frac{\pi i}{2}(2mr-k)}
\big\}
\big[\theta_{k,m}-\theta_{-k,m}](\tau, z)
\nonumber
\\[2mm]
& & \hspace{-5mm}
- \,\ (-1)^{mp} \, 
\frac{q^{-\frac{m}{m+1}(p+\frac14)^2}}{
\theta_{m(2p+\frac12), \, m+1}^{(\pm)}(\tau,0)} \, 
\sum_{j=1}^{\infty} \sum_{r=0}^{j-1} 
\sum_{\substack{k=1 \\[1mm] k \, : \, {\rm odd}}}^{m-1}
(-1)^j \, q^{j^2-\frac{1}{4m}(2mr+k)^2} 
\nonumber
\\[2mm]
& & \times \big\{
q^{(j+p+\frac14) (2mr+k)}e^{\frac{\pi i}{2}(2mr-k)}
\, + \, 
q^{(j-p-\frac14) (2mr+k)}e^{\frac{\pi i}{2}(2mr+k)}
\big\}
\big[\theta_{k,m}-\theta_{-k,m}](\tau, z)
\nonumber
\\[2mm]
& & \hspace{-5mm}
- \,\ i 
\sum_{1 \, \leqq \, k \, \leqq \, pm} (-1)^k \, 
q^{-\frac{1}{m}(k-\frac12+\frac{m}{4})^2} \, 
\big[\theta_{2k-1,m}-\theta_{-(2k-1),m}\big](\tau,z)
\label{eqn:2022-628a}
\end{eqnarray}}
where $\pm$ means that $+$ if $m \in \nnn_{\rm odd}$ and $-$ if $m \in \nnn_{\rm even}$.

\item[{\rm 2)}] In the case $m \in \nnn_{\rm odd}$, 
{\allowdisplaybreaks
\begin{eqnarray}
& & \hspace{-8mm}
\Phi^{[\frac{m}{2},0]}\Big(2\tau, \,\ 
z+\frac{\tau}{2}-\frac12, \,\ 
z-\frac{\tau}{2}+\frac12, \,\ \frac{\tau}{8}\Big)
\nonumber
\\[3mm]
&=& \hspace{-2mm}
- \, i \,\ (-1)^{mp} \, e^{-\frac{\pi im}{2}} \, 
\frac{\eta(2\tau)^3}{\theta_{m(2p-\frac12),m+1}(\tau,0)
} \, \Bigg[
\frac{\theta_{2p-\frac12+m+1, m+1}(\tau, z)}{\theta_{-\frac12,1}(\tau, z)} 
\, - \, 
\frac{\theta_{-(2p-\frac12+m+1), m+1}(\tau, z)}{\theta_{\frac12,1}(\tau, z)}
\Bigg]
\nonumber
\\[2mm]
& & \hspace{-7mm}
+ \,\ (-1)^{mp} \, e^{-\frac{\pi im}{2}} \, 
\frac{q^{-\frac{m}{m+1}(p-\frac14)^2}}{\theta_{m(2p-\frac12),m+1}(\tau,0)} \, 
\sum_{j=1}^{\infty} \sum_{r=1}^j 
\sum_{\substack{k=1 \\[1mm] k \, : \, {\rm odd}}}^{m-1}
(-1)^j \, q^{j^2-\frac{1}{4m}(2mr-k)^2} 
\nonumber
\\[2mm]
& & \hspace{-4mm}
\times \big\{
q^{(j+p-\frac14) (2mr-k)}e^{\frac{\pi i}{2}(2mr+k)}
\, + \, 
q^{(j-p+\frac14) (2mr-k)}e^{\frac{\pi i}{2}(2mr-k)}
\big\}
\big[\theta_{k+m,m}-\theta_{-(k+m),m}](\tau, z)
\nonumber
\\[2mm]
& & \hspace{-7mm}
- \,\ (-1)^{mp} \, e^{-\frac{\pi im}{2}} \, 
\frac{q^{-\frac{m}{m+1}(p-\frac14)^2}}{\theta_{m(2p-\frac12),m+1}(\tau,0)} \, 
\sum_{j=1}^{\infty} \sum_{r=0}^{j-1} 
\sum_{\substack{k=1 \\[1mm] k \, : \, {\rm odd}}}^{m-1}
(-1)^j \, q^{j^2-\frac{1}{4m}(2mr+k)^2} 
\nonumber
\\[2mm]
& & \hspace{-4mm}
\times \big\{
q^{(j+p-\frac14) (2mr+k)}e^{\frac{\pi i}{2}(2mr-k)}
\, + \, 
q^{(j-p+\frac14) (2mr+k)}e^{\frac{\pi i}{2}(2mr+k)}
\big\}
\big[\theta_{k+m,m}-\theta_{-(k+m),m}](\tau, z)
\nonumber
\\[3mm]
& & \hspace{-7mm}
+ \,\ (-1)^{mp} \, 
\sum_{k=1}^{\frac{m-1}{2}} (-1)^k \, 
q^{-m(p+\frac14-\frac{k}{m})^2} \, 
\big[\theta_{2k,m}- \theta_{-2k,m}\big](\tau,z)
\nonumber
\\[3mm]
& & \hspace{-7mm}
+ \,\ \sum_{1 \, \leqq \, k \, \leqq \, pm} 
(-1)^k \, 
q^{-\frac{1}{m}(k+\frac{m}{4})^2} \, 
\big[\theta_{2k,m}-\theta_{-2k,m}\big](\tau,z)
\label{eqn:2022-630d}
\end{eqnarray}}
\end{enumerate}
\end{subequations}
\end{prop}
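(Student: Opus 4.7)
The plan is to combine two ingredients already established in this section: Lemma~\ref{lemma:2022-627a} (resp.\ Lemma~\ref{lemma:2022-630b}), which gives a closed form for the product $\theta_{m(2p\pm\frac12),m+1}^{(\pm)}(\tau,0)\cdot\Phi^{[\frac{m}{2},s]}\bigl(2\tau, z+\tfrac{\tau}{2}-\tfrac12+2p\tau, z-\tfrac{\tau}{2}+\tfrac12-2p\tau, 0\bigr)$ at the \emph{shifted} arguments, and Lemma~\ref{lemma:2022-627b}, which expresses that same shifted $\Phi$ as an explicit $q$-power multiple of $\Phi^{[\frac{m}{2},s]}\bigl(2\tau, z+\tfrac{\tau}{2}-\tfrac12, z-\tfrac{\tau}{2}+\tfrac12, 0\bigr)$ plus a finite theta-difference sum indexed by $1\le k\le pm$. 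Substituting the second identity into the left-hand side of the first, transposing the finite correction, and dividing through by the theta factor will isolate the unshifted $\Phi$; the passage of the fourth argument from $0$ to $\tfrac{\tau}{8}$ is then absorbed into the overall $q^{m/16}$ factor that appears naturally in \eqref{eqn:2022-627b} and \eqref{eqn:2022-630c}, via the elliptic quasi-periodicity of $\Phi^{[\frac{m}{2},s]}$ in its last variable.

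For part~1), I substitute \eqref{eqn:2022-627b} into the left-hand side of \eqref{eqn:2022-627a} and divide both sides by $(-1)^{mp}q^{m(p+\frac14)^2-\frac{m}{16}}\,\theta_{m(2p+\frac12),m+1}^{(\pm)}(\tau,0)$. The algebraic identity $\tfrac{m^2}{m+1}(p+\tfrac14)^2 - m(p+\tfrac14)^2 = -\tfrac{m}{m+1}(p+\tfrac14)^2$ collapses the $q$-prefactor of the two double sums into the clean $q^{-\frac{m}{m+1}(p+\frac14)^2}$ displayed in \eqref{eqn:2022-628a}; the residual $q^{m/16}$ on the left is absorbed into the shift of the fourth argument; and the transposed finite correction from \eqref{eqn:2022-627b} reappears as the concluding $-i\sum_{1\le k\le pm}$ term.

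Part~2) proceeds along the same lines, with Lemma~\ref{lemma:2022-630b} replacing Lemma~\ref{lemma:2022-627a} and \eqref{eqn:2022-630c} replacing \eqref{eqn:2022-627b}, but carries one extra feature: \eqref{eqn:2022-630b} itself contains a built-in finite sum $\theta_{m(2p-\frac12),m+1}(\tau,0)\sum_{k=1}^{(m-1)/2}(-1)^k q^{2k(p+\frac14)-\frac{k^2}{m}}[\theta_{2k,m}-\theta_{-2k,m}]$ inherited from Lemma~3.2 of \cite{W2022b}. After dividing by $(-1)^{mp}q^{m(p+\frac14)^2}\,\theta_{m(2p-\frac12),m+1}(\tau,0)$, the two exponent identities $mp + \tfrac{m^2}{m+1}(p-\tfrac14)^2 - m(p+\tfrac14)^2 = -\tfrac{m}{m+1}(p-\tfrac14)^2$ and $2k(p+\tfrac14) - \tfrac{k^2}{m} - m(p+\tfrac14)^2 = -m\bigl(p+\tfrac14-\tfrac{k}{m}\bigr)^2$ rewrite the $q$-powers into the symmetric forms visible in \eqref{eqn:2022-630d}, while the finite sum from \eqref{eqn:2022-630c} survives separately as the $\sum_{1\le k\le pm}$ term. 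The two finite sums remain disjoint in their indexing and must be kept separate throughout.

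The only genuinely delicate step is the bookkeeping of $q$-exponents, sign factors $(-1)^{mp}$, and the multiplicative twists $e^{-\pi imz}$ and $e^{-\pi im/2}$ carried by \eqref{eqn:2022-630a}--\eqref{eqn:2022-630b}; once those prefactors and the two exponent identities above are verified, every remaining manipulation reduces to a routine substitution and division.
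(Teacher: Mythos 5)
Your proposal follows essentially the same route as the paper: substitute \eqref{eqn:2022-627b} into \eqref{eqn:2022-627a} (resp.\ \eqref{eqn:2022-630c} into the formula \eqref{eqn:2022-630b} of Lemma \ref{lemma:2022-630b}), divide by $(-1)^{mp}\,q^{m(p+\frac14)^2}\,\theta_{m(2p\pm\frac12),\,m+1}^{(\pm)}(\tau,0)$, use $q^{-\frac{m}{16}}\Phi(\ldots,0)=\Phi(\ldots,\frac{\tau}{8})$, transpose the finite correction sums, and simplify with exactly the exponent identities you state. The one small imprecision is your divisor $(-1)^{mp}q^{m(p+\frac14)^2-\frac{m}{16}}\theta(\tau,0)$: the factor $q^{-\frac{m}{16}}$ must instead remain attached to $\Phi(\ldots,0)$ so as to convert it into $\Phi(\ldots,\frac{\tau}{8})$ and leave the clean prefactor $q^{-\frac{m}{m+1}(p+\frac14)^2}$ on the double sums, which is in fact what your description of "absorbing $q^{m/16}$ into the shift of the fourth argument" amounts to.
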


\begin{proof} 1) \,\ Substituting \eqref{eqn:2022-627b} into 
\eqref{eqn:2022-627a}, we have 
{\allowdisplaybreaks
\begin{eqnarray*}
& & \hspace{-7mm}
(-1)^{mp} \, q^{m(p+\frac14)^2} \, 
\theta_{m(2p+\frac12), \, m+1}^{(\pm)}(\tau,0) \, \bigg\{
q^{-\frac{m}{16}} \, 
\Phi^{[\frac{m}{2},\frac12]}\Big(2\tau, \,\ 
z+\frac{\tau}{2}-\frac12, \,\ 
z-\frac{\tau}{2}+\frac12, \,\ 0\Big)
\nonumber
\\[3mm]
& & + \,\ i 
\sum_{1 \, \leqq \, k \, \leqq \, pm} 
(-1)^k \, 
q^{-\frac{1}{m}(k-\frac12+\frac{m}{4})^2} \, 
\big[\theta_{2k-1,m}-\theta_{-(2k-1),m}\big](\tau,z)
\bigg\}
\nonumber
\\[3mm]
&=&
- \, i \, q^{m(p+\frac14)^2} \, 
\eta(2\tau)^3 \, \Bigg\{
\frac{\theta_{2p+\frac12, \, m+1}(\tau, z)}{\theta_{-\frac12,1}(\tau,z)}
\, - \, 
\frac{\theta_{-2p-\frac12, \, m+1}(\tau, z)}{\theta_{\frac12,1}(\tau,z)}
\Bigg\}
\nonumber
\\[2mm]
& & \hspace{-5mm}
+ \,\ q^{\frac{m^2}{m+1}(p+\frac14)^2} \, 
\sum_{j=1}^{\infty} \sum_{r=1}^j 
\sum_{\substack{k=1 \\[1mm] k \, : \, {\rm odd}}}^{m-1}
(-1)^j \, q^{j^2-\frac{1}{4m}(2mr-k)^2} %
\nonumber
\\[2mm]
& & \times \, \big\{
q^{(j+p+\frac14) (2mr-k)}e^{\frac{\pi i}{2}(2mr+k)}
\, + \, 
q^{(j-p-\frac14) (2mr-k)}e^{\frac{\pi i}{2}(2mr-k)}
\big\}
\big[\theta_{k,m}-\theta_{-k,m}](\tau, z)
\nonumber
\\[2mm]
& & \hspace{-5mm}
- \,\ q^{\frac{m^2}{m+1}(p+\frac14)^2} \, 
\sum_{j=1}^{\infty} \sum_{r=0}^{j-1} 
\sum_{\substack{k=1 \\[1mm] k \, : \, {\rm odd}}}^{m-1}
(-1)^j \, q^{j^2-\frac{1}{4m}(2mr+k)^2} 
\nonumber
\\[2mm]
& & \times \big\{
q^{(j+p+\frac14) (2mr+k)}e^{\frac{\pi i}{2}(2mr-k)}
\, + \, 
q^{(j-p-\frac14) (2mr+k)}e^{\frac{\pi i}{2}(2mr+k)}
\big\}
\big[\theta_{k,m}-\theta_{-k,m}](\tau, z)
\end{eqnarray*}}

\noindent
Dividing both sides of this equation by 
$(-1)^{mp} \, q^{m(p+\frac14)^2}
\theta_{m(2p+\frac12), \, m+1}^{(\pm)}(\tau,0)$
and noticing that 
$$
q^{-\frac{m}{16}}
\Phi^{[\frac{m}{2},\frac12]}\Big(2\tau, \,\ 
z+\frac{\tau}{2}-\frac12, \,\ 
z-\frac{\tau}{2}+\frac12, \,\ 0\Big)
=
\Phi^{[\frac{m}{2},\frac12]}\Big(2\tau, \,\ 
z+\frac{\tau}{2}-\frac12, \,\ 
z-\frac{\tau}{2}+\frac12, \,\ \frac{\tau}{8}\Big) \, ,
$$
we obtain the formula \eqref{eqn:2022-628a}.

\vspace{3mm}

\noindent
2) \,\ Substituting \eqref{eqn:2022-630c} into 
\eqref{eqn:2022-630a}, we have 
{\allowdisplaybreaks
\begin{eqnarray*}
& & \hspace{-8mm}
(-1)^{mp} \, q^{m(p+\frac14)^2} \, 
\theta_{m(2p-\frac12),m+1}(\tau,0) \, \bigg\{
q^{-\frac{m}{16}} \, 
\Phi^{[\frac{m}{2},0]}\Big(2\tau, \,\ 
z+\frac{\tau}{2}-\frac12, \,\ 
z-\frac{\tau}{2}+\frac12, \,\ 0\Big)
\\[3mm]
& & 
- \,\ 
\sum_{1 \, \leqq \, k \, \leqq \, pm} \hspace{-2mm}
(-1)^k \, 
q^{-\frac{1}{m}(k+\frac{m}{4})^2} \, 
\big[\theta_{2k,m}-\theta_{-2k,m}\big](\tau,z)
\bigg\}
\\[3mm]
&=&
- \, i \,\ e^{-\frac{\pi im}{2}} \, q^{m(p+\frac14)^2} \, 
\eta(2\tau)^3 \, \Bigg[
\frac{\theta_{2p-\frac12+m+1, m+1}(\tau, z)}{\theta_{-\frac12,1}(\tau, z)} 
\, - \, 
\frac{\theta_{-(2p-\frac12+m+1), m+1}(\tau, z)}{\theta_{\frac12,1}(\tau, z)}
\Bigg]
\\[2mm]
& & \hspace{-5mm}
+ \,\ 
e^{-\frac{\pi im}{2}} \, q^{mp} \, 
q^{\frac{m^2}{m+1}(p-\frac14)^2} \, 
\sum_{j=1}^{\infty} \sum_{r=1}^j 
\sum_{\substack{k=1 \\[1mm] k \, : \, {\rm odd}}}^{m-1}
(-1)^j \, q^{j^2-\frac{1}{4m}(2mr-k)^2} 
\\[2mm]
& & \times \big\{
q^{(j+p-\frac14) (2mr-k)}e^{\frac{\pi i}{2}(2mr+k)}
\, + \, 
q^{(j-p+\frac14) (2mr-k)}e^{\frac{\pi i}{2}(2mr-k)}
\big\}
\big[\theta_{k+m,m}-\theta_{-(k+m),m}](\tau, z)
\\[2mm]
& & \hspace{-5mm}
- \,\ 
e^{-\frac{\pi im}{2}} \, q^{mp} \, 
q^{\frac{m^2}{m+1}(p-\frac14)^2} \, 
\sum_{j=1}^{\infty} \sum_{r=0}^{j-1} 
\sum_{\substack{k=1 \\[1mm] k \, : \, {\rm odd}}}^{m-1}
(-1)^j \, q^{j^2-\frac{1}{4m}(2mr+k)^2} 
\\[2mm]
& & \times \big\{
q^{(j+p-\frac14) (2mr+k)}e^{\frac{\pi i}{2}(2mr-k)}
\, + \, 
q^{(j-p+\frac14) (2mr+k)}e^{\frac{\pi i}{2}(2mr+k)}
\big\}
\big[\theta_{k+m,m}-\theta_{-(k+m),m}](\tau, z)
\\[3mm]
& & \hspace{-3mm}
+ \,\ \theta_{m(2p-\frac12),m+1}(\tau,0)
\sum_{k=1}^{\frac{m-1}{2}} 
(-1)^kq^{2k(p+\frac14)} \, q^{-\frac{k^2}{m}} \, 
\big[\theta_{2k,m}- \theta_{-2k,m}\big](\tau,z)
\end{eqnarray*}}

\noindent
Dividing both sides of this equation by 
$(-1)^{mp} \, q^{m(p+\frac14)^2}
\theta_{m(2p-\frac12), \, m+1}(\tau,0)$, 
we obtain the formula \eqref{eqn:2022-630d}.
\end{proof}

\section{The space of N=3 characters}
\label{sec:space:n3characters}

We put
$$
\begin{array}{lcl}
\ccc [[q^{\frac12}]] &:=& \ccc\text{-linear span of} \,\ \bigg\{
q^b \, \sum\limits_{j=0}^{\infty} \, a_j \, (q^{\frac12})^j 
\quad ; \quad 
b \, \in \, \rrr, \,\ a_j \, \in \, \ccc
\bigg\}
\\[4mm]
\ccc ((q^{\frac12})) &:=& \bigg\{ \dfrac{f}{g} \quad ; \quad 
f, \, g \, \in \, \ccc [[q^{\frac12}]], \,\ 
g \, \ne \, 0 \bigg\}
\end{array}
$$
and
{\allowdisplaybreaks
\begin{eqnarray*}
& & \hspace{-15mm}
V^{[m,\frac12]} :=
\ccc((q^{\frac12}))\text{-linear span of} \,\ 
\Big\{ \big(\overset{N=3}{R}{}^{(+)} \, 
{\rm ch}^{(+)}_{H(\Lambda^{[K(m), m_2]})}\big)(\tau,z)
\,\ ; \,\ m_2 \, \in \, 2\zzz\Big\} 
\\[2mm]
&=&
\ccc((q^{\frac12}))\text{-}{\rm linear \,\ span \,\ of}  
\\[2mm]
& & \Bigg\{
\Phi^{[\frac{m}{2};s]}\Big(2\tau, \, z+\frac{\tau}{2}-\frac12, \, 
z-\frac{\tau}{2}+\frac12, \, \frac{\tau}{8}\Big) \,\ ; \,\ 
s \, \in \, \tfrac12 \zzz_{\rm odd}, \,\ 
\frac12 \leqq s \leqq \frac{m+1}{2}\Bigg\}
\\[3mm]
& & \hspace{-15mm}
V^{[m,0]} := 
\ccc((q^{\frac12}))\text{-linear span of} \,\ 
\Big\{ \big(\overset{N=3}{R}{}^{(+)} \, 
{\rm ch}^{(+)}_{H(\Lambda^{[K(m), m_2]})}\big)(\tau,z)
\,\ ; \,\ m_2 \, \in \, \nnn_{\rm odd} \Big\} 
\\[2mm]
&=&
\ccc((q^{\frac12}))\text{-}{\rm linear \,\ span \,\ of}  
\\[2mm]
& & \Bigg\{
\Phi^{[\frac{m}{2};s]}\Big(2\tau, \, z+\frac{\tau}{2}-\frac12, \, 
z-\frac{\tau}{2}+\frac12, \, \frac{\tau}{8}\Big) \,\ ; \,\ 
s \, \in \, \zzz, \,\ 
1 \leqq s \leqq \frac{m+1}{2}\Bigg\}
\end{eqnarray*}}
for $m \in \nnn$. 
We also define $U^{[m,s]}$, for $m \in \nnn$ and $s \in \frac12 \zzz$,
as follows:
\begin{subequations}
{\allowdisplaybreaks
\begin{eqnarray}
& & \hspace{-10mm}
U^{[m,s]} \quad (s \in \tfrac12+\zzz)
\nonumber
\\[0mm]
&:=&
\ccc((q^{\frac12}))\text{-linear span of} \,\ 
\Bigg\{
\frac{\theta_{\frac12, m+1}(\tau,z)}{\theta_{-\frac12,1}(\tau,z)}
-
\frac{\theta_{-\frac12, m+1}(\tau,z)}{\theta_{\frac12,1}(\tau,z)}, 
\quad 
\underset{ \Big(\substack{\\[1mm]
k \, \in \, \nnn_{\rm odd} \\[1mm]
1 \, \leqq \, k \, \leqq \, m-1
} \Big) }{\big[\theta_{k,m}-\theta_{-k,m}\big](\tau,z)}
\Bigg\}
\nonumber
\\[-6mm]
& &
\label{n3:eqn:2022-708a1}
\\[0mm]
& & \hspace{-10mm}
U^{[m,s]} \quad (s \in \zzz)
\nonumber
\\[0mm]
&:=&
\ccc((q^{\frac12}))\text{-linear span of} \,\ 
\Bigg\{
\frac{\theta_{m+\frac12, m+1}(\tau,z)}{\theta_{-\frac12,1}(\tau,z)}
-
\frac{\theta_{-(m+\frac12), m+1}(\tau,z)}{\theta_{\frac12,1}(\tau,z)}, 
\quad 
\underset{ \Big(\substack{\\[1mm]
k \, \in \, \nnn_{\rm even} \\[1mm]
1 \, \leqq \, k \, \leqq \, m-1
} \Big) }{\big[\theta_{k,m}-\theta_{-k,m}\big](\tau,z)}
\Bigg\}
\nonumber
\\[-2mm]
& &
\label{n3:eqn:2022-708a2}
\end{eqnarray}}
\end{subequations}
Then \hspace{10mm}
$U^{[m,s]} \,\ = \,\ \left\{
\begin{array}{lcl}
U^{[m,\frac12]} & & {\rm if} \quad s \, \in \, \frac12+\zzz \\[1mm]
U^{[m,0]} & & {\rm if} \quad s \, \in \, \zzz
\end{array}\right. \, . $

\vspace{2mm}

The following formula follows immediately from Lemma 2.1 in \cite{W2022b}:

\begin{note} 
\label{note:2022-702a}
Let $m \in \nnn$ and $s \in \frac12 \zzz$, then
{\allowdisplaybreaks
\begin{eqnarray}
& & \hspace{-20mm}
\Phi^{[\frac{m}{2}, s]}\Big(2\tau, \, z+\dfrac{\tau}{2}-\dfrac12, \, 
z-\dfrac{\tau}{2}+\dfrac12, \, \dfrac{\tau}{8}\Big)
-
\Phi^{[\frac{m}{2}, s+1]}\Big(2\tau, \, z+\dfrac{\tau}{2}-\dfrac12, \, 
z-\dfrac{\tau}{2}+\dfrac12, \, \dfrac{\tau}{8}\Big)
\nonumber
\\[2.5mm]
&=&
e^{-\pi is} \, q^{-\frac{1}{m}(s-\frac{m}{4})^2} \, 
\big[\theta_{2s, \, m}-\theta_{-2s, \, m}\big](\tau, z)
\label{n3:eqn:2022-704a}
\end{eqnarray}}
\end{note}

\noindent
Then, by the above Note \ref{note:2022-702a}, the spaces $V^{[m,s]}$  
\,\ $(s=\frac12, 0)$ \, can be written as follows:

\begin{note} \,\ 
\label{n3:note:2022-706a}
Let $m \in \nnn$, then
\begin{enumerate}
\item[{\rm 1)}] \,\ $V^{[m,\frac12]}
\,\ = \,\ 
\ccc((q^{\frac12}))$-linear span of
$$
\Bigg\{
\Phi^{[\frac{m}{2};\frac12]}\Big(2\tau, \, z+\frac{\tau}{2}-\frac12, \, 
z-\frac{\tau}{2}+\frac12, \, \frac{\tau}{8}\Big), \quad \big[
\underset{\Big(\substack{
k \, \in \, \nnn_{\rm odd} \\[1mm] 
1 \leqq k \leqq m-1
}\Big)
}{\theta_{k,m}-\theta_{-k,m}}
\big](\tau,z)\Bigg\}
$$
\item[{\rm 2)}] \,\ $V^{[m,0]}
\,\ = \,\ 
\ccc((q^{\frac12}))$-linear span of
$$
\Bigg\{
\Phi^{[\frac{m}{2};0]}\Big(2\tau, \, z+\frac{\tau}{2}-\frac12, \, 
z-\frac{\tau}{2}+\frac12, \, \frac{\tau}{8}\Big), \quad \big[
\underset{\Big(\substack{
k \, \in \, 2\nnn \\[1mm] 2 \, \leqq \, k \, \leqq \, m-1
}\Big)
}{\theta_{k,m}-\theta_{-k,m}}
\big](\tau,z)\Bigg\}
$$
\end{enumerate}
\end{note}

As to the spaces $U^{[m,s]}$, we note the following:

\begin{lemma} \,\ 
\label{lemma:2022-702a}
\begin{enumerate}
\item[{\rm 1)}] \,\ $
\dfrac{\theta_{2p+\frac12,m+1}(\tau,z)}{\theta_{-\frac12,1}(\tau,z)}
\, - \, 
\dfrac{\theta_{-(2p+\frac12),m+1}(\tau,z)}{\theta_{\frac12,1}(\tau,z)}
\,\ \in \,\ U^{[m,\frac12]}$  \quad for \,\ $m \in \nnn$ and $p \in \zzz$.
\item[{\rm 2)}] \,\ $
\dfrac{\theta_{2p+\frac12+m,m+1}(\tau,z)}{\theta_{-\frac12,1}(\tau,z)}
\, - \, 
\dfrac{\theta_{-(2p+\frac12+m),m+1}(\tau,z)}{\theta_{\frac12,1}(\tau,z)}
\,\ \in \,\ U^{[m,0]}$  \quad for \,\ $m \in \nnn_{\rm odd}$ and $p \in \zzz$.
\end{enumerate}
\end{lemma}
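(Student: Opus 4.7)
The plan is to deduce the lemma directly from Proposition~\ref{prop:2022-628a}, in two stages. Stage one uses \eqref{eqn:2022-628a} or \eqref{eqn:2022-630d} at $p=0$ to show that $\Phi^{[\frac{m}{2},s]}\bigl(2\tau,\,z+\frac{\tau}{2}-\frac12,\,z-\frac{\tau}{2}+\frac12,\,\frac{\tau}{8}\bigr)$ itself lies in $U^{[m,s]}$ for $s=\tfrac12,0$. Stage two reuses the same identity at an arbitrary $p\in\zzz_{\ge 0}$ and solves algebraically for the desired theta-ratio difference in terms of $\Phi^{[\frac{m}{2},s]}(2\tau,\cdot)$ and the declared spanning set of $U^{[m,s]}$. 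The case $p<0$ then follows from the $(m+1)$-periodicity $\theta_{j+2(m+1),m+1}(\tau,z)=\theta_{j,m+1}(\tau,z)$, which makes the expressions in Parts~1) and 2) depend on $p$ only modulo $m+1$, so a representative in $\{0,1,\dots,m\}$ is always available.

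For Part~1), set $p=0$ in \eqref{eqn:2022-628a}. The bracketed theta-ratio difference becomes exactly the distinguished generator $\tfrac{\theta_{1/2,m+1}}{\theta_{-1/2,1}}-\tfrac{\theta_{-1/2,m+1}}{\theta_{1/2,1}}$ of $U^{[m,\frac12]}$; the two $(j,r,k)$-sums are $\ccc((q^{\frac12}))$-combinations of $[\theta_{k,m}-\theta_{-k,m}]$ with $k$ odd and $1\le k\le m-1$; and the final sum $\sum_{1\le k\le pm}$ is empty. Since $\eta(2\tau)^3/\theta^{(\pm)}_{m/2,m+1}(\tau,0)$ is a unit in $\ccc((q^{\frac12}))$, this places $\Phi^{[\frac{m}{2},\frac12]}(2\tau,\cdot)$ in $U^{[m,\frac12]}$. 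For an arbitrary $p\ge 0$, multiply \eqref{eqn:2022-628a} through by $\theta^{(\pm)}_{m(2p+\frac12),m+1}(\tau,0)/\bigl[-i(-1)^{mp}\eta(2\tau)^3\bigr]$ and solve for the bracket. The result writes $\tfrac{\theta_{2p+1/2,m+1}}{\theta_{-1/2,1}}-\tfrac{\theta_{-(2p+1/2),m+1}}{\theta_{1/2,1}}$ as a $\ccc((q^{\frac12}))$-linear combination of $\Phi^{[\frac{m}{2},\frac12]}(2\tau,\cdot)$, of $[\theta_{k,m}-\theta_{-k,m}]$ with odd $k\in[1,m-1]$, and of $[\theta_{2k-1,m}-\theta_{-(2k-1),m}]$ for $1\le k\le pm$. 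The last family is reduced to the spanning set via the periodicity $\theta_{j+2m,m}=\theta_{j,m}$ and the antisymmetry $[\theta_{-j,m}-\theta_{j,m}]=-[\theta_{j,m}-\theta_{-j,m}]$: each such term becomes $\pm[\theta_{j,m}-\theta_{-j,m}]$ with $j$ odd in $[1,m-1]$, or vanishes (when $m$ is odd and $2k-1\equiv m\pmod{2m}$, since $\theta_{m,m}=\theta_{-m,m}$).

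Part~2) is entirely parallel, using \eqref{eqn:2022-630d} in place of \eqref{eqn:2022-628a}. At $p=0$ the first bracket specialises to $\tfrac{\theta_{m+1/2,m+1}}{\theta_{-1/2,1}}-\tfrac{\theta_{-(m+1/2),m+1}}{\theta_{1/2,1}}$, the distinguished generator of $U^{[m,0]}$; all remaining theta differences have indices of the form $k+m$ with $k$ odd in $[1,m-1]$ or $2k$ with $k\in\zzz$, and the hypothesis $m\in\nnn_{\rm odd}$ makes both $k+m$ and $2k$ even. Reduction modulo $2m$ therefore sends each to $\pm[\theta_{j,m}-\theta_{-j,m}]$ with $j$ even in $[2,m-1]$, i.e.\ to a spanning element of $U^{[m,0]}$; consequently $\Phi^{[\frac{m}{2},0]}(2\tau,\cdot)\in U^{[m,0]}$. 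Solving \eqref{eqn:2022-630d} at general $p\ge 0$ for its first bracket then expresses the element of Part~2) as a $\ccc((q^{\frac12}))$-combination of elements of $U^{[m,0]}$.

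The main obstacle, modest but essential, is the careful index-bookkeeping: after solving \eqref{eqn:2022-628a} or \eqref{eqn:2022-630d}, one must verify that every residual $[\theta_{j,m}-\theta_{-j,m}]$ reduces, under $\theta_{j+2m,m}=\theta_{j,m}$ and the antisymmetry, to a signed element of the spanning set declared in \eqref{n3:eqn:2022-708a1} or \eqref{n3:eqn:2022-708a2}, with $j$ of the correct parity (odd for $U^{[m,\frac12]}$, even for $U^{[m,0]}$). The invertibility in $\ccc((q^{\frac12}))$ of the $\eta(2\tau)^3$ and $\theta^{(\pm)}_{\cdot,m+1}(\tau,0)$ factors used in the rearrangement is immediate from their standard $q$-expansions.
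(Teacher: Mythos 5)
Your proposal is correct and follows essentially the same route as the paper: the paper likewise derives the case $p\ge 0$ from Proposition \ref{prop:2022-628a}, observing that the left-hand sides of \eqref{eqn:2022-628a} and \eqref{eqn:2022-630d} are independent of $p$ (your two-stage ``evaluate at $p=0$, then solve for the bracket at general $p$'' is just a spelled-out version of this), and then reduces $p<0$ to $p'=p+a(m+1)\ge 0$ via the periodicity $\theta_{j+2(m+1),m+1}=\theta_{j,m+1}$. Your extra bookkeeping confirming that the residual $[\theta_{j,m}-\theta_{-j,m}]$ terms reduce, with the correct parity, to the declared spanning sets is a welcome elaboration of a step the paper leaves implicit.
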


\begin{proof} Claims 1) and 2) hold when $p \in \zzz_{\geq 0}$, 
due to Proposition \ref{prop:2022-628a} since the LHS's of 
\eqref{eqn:2022-628a} and \eqref{eqn:2022-630d} do not depend on $p$. 
So we need to prove them only in the case when $p \in \zzz_{<0}$.
We choose $a \in \zzz$ such that $p+a(m+1)\geq 0$ and put 
$p' := p+a(m+1) \,\ (\in \zzz_{\geq 0})$. 
Then we have

\medskip

\noindent
1) \quad $
\dfrac{\theta_{2p+\frac12,m+1}(\tau,z)}{\theta_{-\frac12,1}(\tau,z)}
\, - \, 
\dfrac{\theta_{-(2p+\frac12),m+1}(\tau,z)}{\theta_{\frac12,1}(\tau,z)}$

{\allowdisplaybreaks
\begin{eqnarray*}
&=&
\frac{\theta_{2p+2a(m+1)+\frac12,m+1}(\tau,z)}{\theta_{-\frac12,1}(\tau,z)}
\, - \, 
\frac{\theta_{-2p-2a(m+1)-\frac12,m+1}(\tau,z)}{\theta_{\frac12,1}(\tau,z)}
\\[3mm]
&=&
\dfrac{\theta_{2p'+\frac12,m+1}(\tau,z)}{\theta_{-\frac12,1}(\tau,z)}
\, - \, 
\dfrac{\theta_{-(2p'+\frac12),m+1}(\tau,z)}{\theta_{\frac12,1}(\tau,z)}
\,\ \in \,\ U^{[m, \frac12]}
\end{eqnarray*}}

\noindent
2) \quad $
\dfrac{\theta_{2p+\frac12+m,m+1}(\tau,z)}{\theta_{-\frac12,1}(\tau,z)}
\, - \, 
\dfrac{\theta_{-(2p+\frac12+m),m+1}(\tau,z)}{\theta_{\frac12,1}(\tau,z)}$

{\allowdisplaybreaks
\begin{eqnarray*}
&=&
\frac{\theta_{2p+2a(m+1)+\frac12+m,m+1}(\tau,z)}{\theta_{-\frac12,1}(\tau,z)}
\, - \, 
\frac{\theta_{-2p-2a(m+1)-\frac12-m,m+1}(\tau,z)}{\theta_{\frac12,1}(\tau,z)}
\\[3mm]
&=&
\dfrac{\theta_{2p'+\frac12+m,m+1}(\tau,z)}{\theta_{-\frac12,1}(\tau,z)}
\, - \, 
\dfrac{\theta_{-(2p'+\frac12+m),m+1}(\tau,z)}{\theta_{\frac12,1}(\tau,z)}
\,\ \in \,\ U^{[m, 0]}
\end{eqnarray*}}
Thus the proof of Lemma \ref{lemma:2022-702a} is completed.
\end{proof}

In the case $m \in \nnn_{\rm odd}$, the space $U^{[m,0]}$ 
has a bit simpler characterization:

\begin{lemma} \,\
\label{n3:lemma:2022-707a}
In the case $m \in \nnn_{\rm odd}$, 
\begin{enumerate}
\item[{\rm 1)}] \,\ $U^{[m,0]} \,\ = \,\ \ccc((q^{\frac12}))$-linear span of
$$
\Bigg\{
\frac{\theta_{-\frac12, m+1}(\tau,z)}{\theta_{-\frac12,1}(\tau,z)}
\, - \, 
\frac{\theta_{\frac12, m+1}(\tau,z)}{\theta_{\frac12,1}(\tau,z)}, 
\quad 
\underset{ \Big(\substack{\\[1mm]
k \, \in \, \nnn_{\rm even} \\[1mm]
1 \, \leqq \, k \, \leqq \, m-1
} \Big) }{\big[\theta_{k,m}-\theta_{-k,m}\big](\tau,z)}
\Bigg\}
$$
\item[{\rm 2)}] \,\ $
\dfrac{\theta_{2p-\frac12,m+1}(\tau,z)}{\theta_{-\frac12,1}(\tau,z)}
\, - \, 
\dfrac{\theta_{-(2p-\frac12),m+1}(\tau,z)}{\theta_{\frac12,1}(\tau,z)}
\,\ \in \,\ U^{[m,0]}$  \quad for \,\ $p \in \zzz$.
\end{enumerate}
\end{lemma}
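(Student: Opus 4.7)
The plan is to prove (2) first, as a direct consequence of Lemma \ref{lemma:2022-702a} (2) by reindexing, and then to derive (1) from (2) together with equation \eqref{eqn:2022-630d} of Proposition \ref{prop:2022-628a} (2).

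For (2), the key observation is that when $m \in \nnn_{\rm odd}$, $\tfrac{m+1}{2}$ is an integer. Setting $p' := p - \tfrac{m+1}{2}$ we have $2p - \tfrac12 = 2p' + \tfrac12 + m$, so the expression in (2) becomes
$$\dfrac{\theta_{2p'+\frac12+m,\, m+1}(\tau,z)}{\theta_{-\frac12,1}(\tau,z)} - \dfrac{\theta_{-(2p'+\frac12+m),\, m+1}(\tau,z)}{\theta_{\frac12,1}(\tau,z)},$$
which is exactly the element shown to lie in $U^{[m,0]}$ by Lemma \ref{lemma:2022-702a} (2) (applied with $p$ replaced by $p'$).

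For (1), let $W$ denote the $\ccc((q^{\frac12}))$-linear span on the right-hand side. The inclusion $W \subseteq U^{[m,0]}$ is immediate: the theta differences with even $k$ are already generators of $U^{[m,0]}$ by \eqref{n3:eqn:2022-708a2}, and $\dfrac{\theta_{-\frac12,m+1}}{\theta_{-\frac12,1}} - \dfrac{\theta_{\frac12,m+1}}{\theta_{\frac12,1}}$ lies in $U^{[m,0]}$ by (2) applied at $p = 0$. For $U^{[m,0]} \subseteq W$, the only nontrivial point is to show that the original generator $X^{\rm old} := \dfrac{\theta_{m+\frac12,m+1}}{\theta_{-\frac12,1}} - \dfrac{\theta_{-(m+\frac12),m+1}}{\theta_{\frac12,1}}$ belongs to $W$. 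I would exploit the fact that in \eqref{eqn:2022-630d} the left-hand side $\Phi^{[\frac{m}{2},0]}\bigl(2\tau, z+\tfrac{\tau}{2}-\tfrac12, z-\tfrac{\tau}{2}+\tfrac12, \tfrac{\tau}{8}\bigr)$ is $p$-independent. Evaluating at $p = 0$ produces $X^{\rm old}$ as the prefactor theta ratio, while evaluating at $p = \tfrac{m+1}{2} \in \zzz_{\geq 0}$ produces the index $2p - \tfrac12 + m + 1 = 2m + \tfrac32$, which the period-$2(m+1)$ identity $\theta_{2m+\frac32, m+1} = \theta_{-\frac12, m+1}$ converts into the new generator $X^{\rm new} := \dfrac{\theta_{-\frac12,m+1}}{\theta_{-\frac12,1}} - \dfrac{\theta_{\frac12,m+1}}{\theta_{\frac12,1}}$. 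Equating the two right-hand sides and solving for $X^{\rm old}$ gives
$$X^{\rm old} = \frac{C_{(m+1)/2}}{C_0} \, X^{\rm new} + \frac{1}{C_0}\bigl(Y_{(m+1)/2} - Y_0\bigr),$$
where $C_p \in \ccc((q^{\frac12}))^{\times}$ are the nonzero scalar prefactors in \eqref{eqn:2022-630d} and $Y_p$ denotes the remaining theta-difference terms.

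The remaining step, which I expect to be the main bookkeeping hurdle, is to verify that $Y_{(m+1)/2} - Y_0$ lies in the $\ccc((q^{\frac12}))$-span of $\{[\theta_{k,m} - \theta_{-k,m}] : k \in \nnn_{\rm even},\; 1 \leq k \leq m-1\}$. The terms $[\theta_{k+m,m} - \theta_{-(k+m),m}]$ appearing for $k$ odd in $[1,m-1]$ are rewritten via the period-$2m$ identity $\theta_{k+m,m} = \theta_{-(m-k),m}$ as $-[\theta_{m-k,m} - \theta_{-(m-k),m}]$ with $m-k$ even in $[1,m-1]$. The terms $[\theta_{2k,m} - \theta_{-2k,m}]$ for $1 \leq k \leq \tfrac{m-1}{2}$ are already of the required form, while those for $1 \leq k \leq pm$ (which contribute only at $p = \tfrac{m+1}{2}$, since the sum is empty at $p=0$) are reduced to even indices in $[1,m-1]$ by the same period-$2m$ invariance combined with the reflection $\theta_{-j,m} - \theta_{j,m} = -[\theta_{j,m} - \theta_{-j,m}]$. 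After these reductions, $X^{\rm old} \in W$, which completes (1).
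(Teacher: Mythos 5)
Your proof is correct and follows essentially the same route as the paper: part 2) is exactly the paper's argument (apply Lemma \ref{lemma:2022-702a}\,2) with $p$ shifted by $\tfrac{m+1}{2}$, using that this shift is integral for $m$ odd), and part 1) rests on the same $p$-independence of the left-hand side of \eqref{eqn:2022-630d} that underlies Lemma \ref{lemma:2022-702a}. You are in fact more careful than the paper, which only exhibits the new generator inside $U^{[m,0]}$ and leaves the reverse inclusion (your $X^{\rm old}\in W$, including the reduction of the residual theta-differences to even indices via $2m$-periodicity) implicit.
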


\begin{proof} In the case $m \in \nnn_{\rm odd}$, letting $p=-\frac{m+1}{2}$ in 2) 
of Lemma \ref{lemma:2022-702a}, we have 
$$
\frac{\theta_{-\frac12, m+1}(\tau,z)}{\theta_{-\frac12,1}(\tau,z)}
\, - \, 
\frac{\theta_{\frac12, m+1}(\tau,z)}{\theta_{\frac12,1}(\tau,z)}
\,\ \in \, U^{[m,0]} \, ,
$$
proving 1). \, 2) is obtained from 2) of Lemma \ref{lemma:2022-702a}
by replacing $p$ with $p - \frac{m+1}{2}$.
\end{proof}

The spaces $U^{[m,s]}$ and $V^{[m,s]}$ are related as follows:

\begin{lemma} \,\
\label{n3:lemma:2022-706a}
\begin{enumerate}
\item[{\rm 1)}] \,\ $V^{[m,0]} \,\ = \,\ U^{[m,0]}$ \hspace{10mm}
for \quad $m \in \nnn_{\rm odd}$.
\item[{\rm 2)}] \,\ $V^{[m,\frac12]} \,\ = \,\ U^{[m,\frac12]}$ \hspace{10mm}
for \quad $m \in \nnn$.
\end{enumerate}
\end{lemma}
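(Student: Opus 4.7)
The plan is to exploit Proposition \ref{prop:2022-628a} with the parameter choice $p=0$: this produces, in each of the two cases of the lemma, a single $\ccc((q^{\frac12}))$-linear identity relating the distinguished generator $\Phi^{[\frac{m}{2},s]}\bigl(2\tau, z+\tfrac{\tau}{2}-\tfrac12, z-\tfrac{\tau}{2}+\tfrac12, \tfrac{\tau}{8}\bigr)$ of $V^{[m,s]}$ to the distinguished ratio-difference generator of $U^{[m,s]}$, modulo a ``tail'' of theta differences $[\theta_{k,m}-\theta_{-k,m}]$. Taking $p=0$ collapses the finite correction sums $\sum_{1\leq k\leq pm}$ that appear on the right-hand sides, leaving only the infinite $(j,r,k)$-series whose terms are scalar multiples of $[\theta_{k,m}-\theta_{-k,m}]$ of controlled parity. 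The two inclusions $V^{[m,s]}\subseteq U^{[m,s]}$ and $U^{[m,s]}\subseteq V^{[m,s]}$ will then follow by reading the same identity in opposite directions.

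For part 2), I would apply \eqref{eqn:2022-628a} with $p=0$. The first-line ratio-difference is then exactly $\frac{\theta_{\frac12,m+1}}{\theta_{-\frac12,1}}-\frac{\theta_{-\frac12,m+1}}{\theta_{\frac12,1}}$, the distinguished generator of $U^{[m,\frac12]}$. The two infinite sums contribute only $[\theta_{k,m}-\theta_{-k,m}]$ with $k$ odd; using $\theta_{j+2m,m}=\theta_{j,m}$ and the antisymmetry $\theta_{-j,m}(\tau,z)=\theta_{j,m}(\tau,-z)$ (with $j\equiv 0$ or $m\pmod{2m}$ yielding vanishing differences), these reduce to combinations of the odd-index generators $[\theta_{k,m}-\theta_{-k,m}]$ with $1\leq k\leq m-1$, which lie in both spaces. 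This yields $\Phi^{[\frac{m}{2},\frac12]}(\dots)\in U^{[m,\frac12]}$, hence $V^{[m,\frac12]}\subseteq U^{[m,\frac12]}$. Solving the same identity for the ratio-difference — legitimate because the prefactor $-i\eta(2\tau)^3/\theta^{(\pm)}_{\frac{m}{2},m+1}(\tau,0)$ is a unit in $\ccc((q^{\frac12}))$ — exhibits the distinguished generator of $U^{[m,\frac12]}$ as an element of $V^{[m,\frac12]}$, giving the reverse inclusion.

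For part 1), I would run the same scheme on \eqref{eqn:2022-630d} with $p=0$ and $m\in\nnn_{\rm odd}$. The first-line ratio-difference becomes $\frac{\theta_{m+\frac12,m+1}}{\theta_{-\frac12,1}}-\frac{\theta_{-(m+\frac12),m+1}}{\theta_{\frac12,1}}$, the distinguished generator of $U^{[m,0]}$. The middle sum $\sum_{k=1}^{(m-1)/2}$ produces $[\theta_{2k,m}-\theta_{-2k,m}]$, which are even-index generators. The two infinite sums produce $[\theta_{k+m,m}-\theta_{-(k+m),m}]$ with $k$ odd; since $m$ is odd, $k+m$ is even, and the same periodicity-plus-reflection reduction shows these equal, up to sign, even-index generators in $[2,m-1]$, again shared by $V^{[m,0]}$ and $U^{[m,0]}$. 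The double-inclusion argument of part 2) then runs verbatim, with the prefactor $-ie^{-\pi im/2}\eta(2\tau)^3/\theta_{-\frac{m}{2},m+1}(\tau,0)$ once more a unit in $\ccc((q^{\frac12}))$.

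The main obstacle is bookkeeping rather than conceptual: one has to verify carefully that (i) in each invocation the parity of the index $k$ surviving in the tail-term $[\theta_{k,m}-\theta_{-k,m}]$ matches the parity of the generators defining $V^{[m,s]}$ and $U^{[m,s]}$ after reduction modulo $2m$, and (ii) each prefactor dividing the ratio-difference is a genuine unit of $\ccc((q^{\frac12}))$, which follows from the nonvanishing leading $q$-coefficients of the relevant $\eta$-quotients and $\theta$-values. Once these checks are in hand, no identity beyond Proposition \ref{prop:2022-628a} is needed.
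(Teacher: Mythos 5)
Your proof is correct and follows essentially the same route as the paper: both hinge on reading the identity of Proposition \ref{prop:2022-628a} in the two directions, with your $p=0$ specialization making the ratio-difference term exactly the distinguished generator of $U^{[m,s]}$ (which also lets you bypass the appeal to Lemma \ref{lemma:2022-702a} for general $p$). The one caveat is that your closing claim that nothing beyond Proposition \ref{prop:2022-628a} is needed is slightly too strong: since $V^{[m,s]}$ is \emph{defined} as the span of the whole family $\Phi^{[\frac{m}{2};s']}$ over the indicated range of $s'$, you still need Note \ref{note:2022-702a} and Note \ref{n3:note:2022-706a} to reduce $V^{[m,s]}$ to the single $\Phi$ plus the theta-differences $[\theta_{k,m}-\theta_{-k,m}]$ before your double-inclusion argument applies, exactly as the paper's proof cites.
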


\begin{proof} This lemma follows immediately from 
Proposition \ref{prop:2022-628a} and Lemma \ref{lemma:2022-702a} 
and Note \ref{note:2022-702a} and Note \ref{n3:note:2022-706a}.
\end{proof}

We now consider the multiplication of theta functions with 
functions in $U^{[m,s]}$. First we note the following simple fact:

\begin{note} \,\ 
\label{n3:note:2022-708a}
Let $m \in \nnn$ and $n \in \{1, 2\}$ and $j, k \in \zzz$. Then 
$$
\theta_{k,m}-\theta_{-k,m} \, \in \, U^{[m, \, s]}
\quad \Longrightarrow \quad 
\theta_{j,n} \cdot \big[\theta_{k,m}-\theta_{-k,m}\big]
\, \in \, U^{[m+n, \, s+\frac{j}{2}]}
$$
\end{note}

\begin{proof} First consider the case $n=1$, Then, 
by \eqref{n3:eqn:2022-701b}, we have 
\begin{subequations}
{\allowdisplaybreaks
\begin{eqnarray}
& & \hspace{-20mm}
\theta_{0,1}(\tau,z) \, \big[\theta_{k,m}-\theta_{-k,m}\big](\tau, z)
\nonumber
\\[0mm]
&=&
\sum_{r=0}^m \, 
\theta_{k-2rm, \, m(m+1)}(\tau,0) \cdot 
\big[\theta_{k+2r, \, m+1}-\theta_{-(k+2r), \, m+1}\big](\tau,z)
\label{eqn:2022-702b1}
\\[1mm]
& & \hspace{-20mm}
\theta_{1,1}(\tau,z) \, \big[\theta_{k,m}-\theta_{-k,m}\big](\tau, z)
\nonumber
\\[0mm]
&=&
\sum_{r=0}^m \, 
\theta_{k-(2r+1)m, \, m(m+1)}(\tau,0) \cdot 
\big[\theta_{k+2r+1, \, m+1}-\theta_{-(k+2r+1), \, m+1}\big](\tau,z)
\label{eqn:2022-702b2}
\end{eqnarray}}
\end{subequations}

\noindent
From these equations and the definition of $U^{[m,s]}$, we see that
{\allowdisplaybreaks
\begin{eqnarray*}
\theta_{k,m}-\theta_{-k,m} \in U^{[m,\frac12]} \hspace{5mm} 
({\rm i.e,} \,\ k \in \nnn_{\rm odd})
&\Longrightarrow& \left\{
\begin{array}{ccl}
\text{RHS of \eqref{eqn:2022-702b1}} &\in & U^{[m+1, \frac12]} \\[1mm]
\text{RHS of \eqref{eqn:2022-702b2}} &\in & U^{[m+1, 0]}
\end{array} \right.
\\[1mm]
\theta_{k,m}-\theta_{-k,m} \in U^{[m,0]} \hspace{5mm} 
({\rm i.e,} \,\ k \in \nnn_{\rm even})
&\Longrightarrow& \left\{
\begin{array}{ccl}
\text{RHS of \eqref{eqn:2022-702b1}} &\in & U^{[m+1, 0]} \\[1mm]
\text{RHS of \eqref{eqn:2022-702b2}} &\in & U^{[m+1, \frac12]}
\end{array} \right.
\end{eqnarray*}}
namely
\begin{subequations}
{\allowdisplaybreaks
\begin{eqnarray}
\theta_{k,m}-\theta_{-k,m} \in U^{[m,\frac12]} 
&\Longrightarrow& \left\{
\begin{array}{ccl}
\theta_{0,1} \cdot \big[\theta_{k,m}-\theta_{-k,m}\big] &\in & U^{[m+1, \frac12]} \\[1mm]
\theta_{1,1} \cdot \big[\theta_{k,m}-\theta_{-k,m}\big] &\in & U^{[m+1, 0]}
\end{array} \right.
\label{n3:eqn:2022-706c1}
\\[1mm]
\theta_{k,m}-\theta_{-k,m} \in U^{[m,0]} 
&\Longrightarrow& \left\{
\begin{array}{ccl}
\theta_{0,1} \cdot \big[\theta_{k,m}-\theta_{-k,m}\big] &\in & U^{[m+1, 0]} \\[1mm]
\theta_{1,1} \cdot \big[\theta_{k,m}-\theta_{-k,m}\big] &\in & U^{[m+1, \frac12]}
\end{array} \right.
\label{n3:eqn:2022-706c2}
\end{eqnarray}}
\end{subequations}
proving Note \ref{n3:note:2022-708a} in the case $n=1$. The proof in the 
case $n=2$ is quite similar.
\end{proof}

By easy calculation using \eqref{n3:eqn:2022-701b} and 
\eqref{n3:eqn:2022-701c} and \eqref{n3:eqn:2022-701d}, one obtains 
the following:

\begin{note} 
\label{note:2022-707a}
For $m \, \in \, \nnn$, the following formulas hold:
\begin{enumerate}
\item[{\rm 1)}]
\begin{enumerate}
\item[{\rm (i)}] \quad $\theta_{0,1}(\tau,z) \, 
\Bigg[
\dfrac{\theta_{\frac12, m+1}(\tau,z)}{\theta_{-\frac12, 1}(\tau,z)}
-
\dfrac{\theta_{-\frac12, m+1}(\tau,z)}{\theta_{\frac12, 1}(\tau,z)}\Bigg]$
$$
=
\sum_{r \in \zzz/(m+2)\zzz}
\theta_{\frac12-2r(m+1), (m+1)(m+2)}(\tau,0) \, \Bigg[
\frac{\theta_{\frac12+2r,m+2}(\tau,z)}{\theta_{-\frac12, 1}(\tau,z)} 
-
\frac{\theta_{-\frac12-2r,m+2}(\tau,z)}{\theta_{\frac12, 1}(\tau,z)}\Bigg]
$$
\item[{\rm (ii)}] \quad $\theta_{0,1}(\tau,z) \, 
\Bigg[
\underbrace{
\dfrac{\theta_{-\frac12, m+1}(\tau,z)}{\theta_{-\frac12, 1}(\tau,z)}
-
\dfrac{\theta_{\frac12, m+1}(\tau,z)}{\theta_{\frac12, 1}(\tau,z)}}_{
\substack{\rotatebox{-90}{$\in$} \\[-0.5mm] {\displaystyle 
U^{[m,0]} \,\ {\rm if} \,\ m \in \nnn_{\rm odd}
}}}
\Bigg]$
$$
=
\sum_{r \in \zzz/(m+2)\zzz}
\theta_{\frac12+2r(m+1), (m+1)(m+2)}(\tau,0) \, \Bigg[
\underbrace{
\frac{\theta_{-\frac12+2r,m+2}(\tau,z)}{\theta_{-\frac12, 1}(\tau,z)} 
-
\frac{\theta_{\frac12-2r,m+2}(\tau,z)}{\theta_{\frac12, 1}(\tau,z)}
}_{\substack{\rotatebox{-90}{$\in$} \\[-0.5mm] {\displaystyle 
U^{[m+1,0]} \,\ {\rm if} \,\ m \in \nnn_{\rm even}
}}}
\Bigg]
$$
\end{enumerate}
\item[{\rm 2)}]
\begin{enumerate}
\item[{\rm (i)}] \quad $\theta_{1,1}(\tau,z) \, \Bigg[
\dfrac{\theta_{\frac12, m+1}(\tau,z)}{\theta_{-\frac12, 1}(\tau,z)}
-
\dfrac{\theta_{-\frac12, m+1}(\tau,z)}{\theta_{\frac12, 1}(\tau,z)}\Bigg]$
$$
=
\sum_{r \in \zzz/(m+2)\zzz}
\theta_{\frac12-(2r-1)(m+1), (m+1)(m+2)}(\tau,0) \, \Bigg[
\underbrace{
\frac{\theta_{-\frac12+2r,m+2}(\tau,z)}{\theta_{-\frac12, 1}(\tau,z)} 
-
\frac{\theta_{\frac12-2r,m+2}(\tau,z)}{\theta_{\frac12, 1}(\tau,z)}
}_{\substack{\rotatebox{-90}{$\in$} \\[-0.5mm] {\displaystyle 
U^{[m+1,0]} \,\ {\rm if} \,\ m \in \nnn_{\rm even}
}}}
\Bigg]
$$
\item[{\rm (ii)}] \quad $\theta_{1,1}(\tau,z) \, \Bigg[
\underbrace{
\dfrac{\theta_{-\frac12, m+1}(\tau,z)}{\theta_{-\frac12, 1}(\tau,z)}
-
\dfrac{\theta_{\frac12, m+1}(\tau,z)}{\theta_{\frac12, 1}(\tau,z)}
}_{\substack{\rotatebox{-90}{$\in$} \\[-0.5mm] {\displaystyle 
U^{[m,0]} \,\ {\rm if} \,\ m \in \nnn_{\rm odd}
}}}
\Bigg]$
$$
= \sum_{r \in \zzz/(m+2)\zzz}
\theta_{\frac12+(2r+1)(m+1), (m+1)(m+2)}(\tau,0) \, \Bigg[
\frac{\theta_{\frac12+2r,m+2}(\tau,z)}{\theta_{-\frac12, 1}(\tau,z)} 
-
\frac{\theta_{-\frac12-2r,m+2}(\tau,z)}{\theta_{\frac12, 1}(\tau,z)}\Bigg]
$$
\end{enumerate}
\item[{\rm 3)}]
\begin{enumerate}
\item[{\rm (i)}] \quad $\theta_{0,2}(\tau,z) \, \Bigg[
\dfrac{\theta_{\frac12, m+1}(\tau,z)}{\theta_{-\frac12,1}(\tau,z)} 
-
\dfrac{\theta_{-\frac12, m+1}(\tau,z)}{\theta_{\frac12,1}(\tau,z)}\Bigg]$
$$
=\sum_{r \in \zzz/(m+3)\zzz}
\theta_{1-4r(m+1), 2(m+1)(m+3)}(\tau,0) \, \Bigg[
\frac{\theta_{\frac12+4r,m+3}(\tau,z)}{\theta_{-\frac12,1}(\tau,z)} 
-
\frac{\theta_{-(\frac12+4r),m+3}(\tau,z)}{\theta_{\frac12,1}(\tau,z)}\Bigg]
$$
\item[{\rm (ii)}] \quad $\theta_{0,2}(\tau,z) \, \Bigg[
\underbrace{
\dfrac{\theta_{-\frac12, m+1}(\tau,z)}{\theta_{-\frac12,1}(\tau,z)} 
-
\dfrac{\theta_{\frac12, m+1}(\tau,z)}{\theta_{\frac12,1}(\tau,z)}
}_{\substack{\rotatebox{-90}{$\in$} \\[-0.5mm] {\displaystyle 
U^{[m,0]} \,\ {\rm if} \,\ m \in \nnn_{\rm odd}
}}}
\Bigg]$
$$
=\sum_{r \in \zzz/(m+3)\zzz}
\theta_{1+4r(m+1), 2(m+1)(m+3)}(\tau,0) \, \Bigg[
\underbrace{
\frac{\theta_{-\frac12+4r,m+3}(\tau,z)}{\theta_{-\frac12,1}(\tau,z)} 
-
\frac{\theta_{\frac12-4r,m+3}(\tau,z)}{\theta_{\frac12,1}(\tau,z)}
}_{\substack{\rotatebox{-90}{$\in$} \\[-0.5mm] {\displaystyle 
U^{[m+2,0]} \,\ {\rm if} \,\ m \in \nnn_{\rm odd}
}}}
\Bigg]
$$
\end{enumerate}
\item[{\rm 4)}]
\begin{enumerate}
\item[{\rm (i)}] \quad $\theta_{2,2}(\tau,z) \, \Bigg[
\dfrac{\theta_{\frac12, m+1}(\tau,z)}{\theta_{-\frac12,1}(\tau,z)} 
-
\dfrac{\theta_{-\frac12, m+1}(\tau,z)}{\theta_{\frac12,1}(\tau,z)}\Bigg]$

$$ \hspace{-10mm}
= \sum_{r \in \zzz/(m+3)\zzz} \hspace{-3mm}
\theta_{1-2(2r+1)(m+1), 2(m+1)(m+3)}(\tau,0) \, \Bigg[
\frac{\theta_{\frac12+2(2r+1),m+3}(\tau,z)}{\theta_{-\frac12,1}(\tau,z)} 
-
\frac{\theta_{-\frac12-2(2r+1)),m+3}(\tau,z)}{\theta_{\frac12,1}(\tau,z)}\Bigg]
$$
\item[{\rm (ii)}] \quad $\theta_{2,2}(\tau,z) \, \Bigg[
\underbrace{
\dfrac{\theta_{-\frac12, m+1}(\tau,z)}{\theta_{-\frac12,1}(\tau,z)} 
-
\dfrac{\theta_{\frac12, m+1}(\tau,z)}{\theta_{\frac12,1}(\tau,z)}
}_{\substack{\rotatebox{-90}{$\in$} \\[-0.5mm] {\displaystyle 
U^{[m,0]} \,\ {\rm if} \,\ m \in \nnn_{\rm odd}
}}}
\Bigg]$
$$ \hspace{-10mm}
= \sum_{r \in \zzz/(m+3)\zzz} \hspace{-3mm}
\theta_{1+2(2r+1)(m+1), 2(m+1)(m+3)}(\tau,0) \, \Bigg[
\underbrace{
\frac{\theta_{-\frac12+2(2r+1),m+3}(\tau,z)}{\theta_{-\frac12,1}(\tau,z)} 
-
\frac{\theta_{\frac12-2(4r+1),m+3}(\tau,z)}{\theta_{\frac12,1}(\tau,z)}
}_{\substack{\rotatebox{-90}{$\in$} \\[-0.5mm] {\displaystyle 
U^{[m+2,0]} \,\ {\rm if} \,\ m \in \nnn_{\rm odd}
}}}
\Bigg]
$$
\end{enumerate}
\item[{\rm 5)}]
\begin{enumerate}
\item[{\rm (i)}] \quad $\vartheta_{10}(\tau,z) \, \Bigg[
\dfrac{\theta_{\frac12, m+1}(\tau,z)}{\theta_{-\frac12,1}(\tau,z)} 
-
\dfrac{\theta_{-\frac12, m+1}(\tau,z)}{\theta_{\frac12,1}(\tau,z)}\Bigg]$
$$
=\sum_{r \in \zzz/2(m+3)\zzz} \hspace{-5mm}
\theta_{-1+(1-2r)(m+1), 2(m+1)(m+3)}(\tau,0) \, \Bigg[
\underbrace{
\frac{\theta_{-\frac12+2r,m+3}(\tau,z)}{\theta_{-\frac12,1}(\tau,z)} 
-
\frac{\theta_{\frac12-2r,m+3}(\tau,z)}{\theta_{\frac12,1}(\tau,z)} 
}_{\substack{\rotatebox{-90}{$\in$} \\[-0.5mm] {\displaystyle 
U^{[m+2,0]} \,\ {\rm if} \,\ m \in \nnn_{\rm odd}
}}}
\Bigg]
$$
\item[{\rm (ii)}] \quad $\vartheta_{10}(\tau,z) \, \Bigg[
\underbrace{
\dfrac{\theta_{-\frac12, m+1}(\tau,z)}{\theta_{-\frac12,1}(\tau,z)} 
-
\dfrac{\theta_{\frac12, m+1}(\tau,z)}{\theta_{\frac12,1}(\tau,z)}
}_{\substack{\rotatebox{-90}{$\in$} \\[-0.5mm] {\displaystyle 
U^{[m,0]} \,\ {\rm if} \,\ m \in \nnn_{\rm odd}
}}}
\Bigg]$
$$
=
\sum_{r \in \zzz/2(m+3)\zzz} \hspace{-5mm}
\theta_{-1+(1+2r)(m+1), 2(m+1)(m+3)}(\tau,0) \, \Bigg[
\frac{\theta_{\frac12+2r,m+3}(\tau,z)}{\theta_{-\frac12,1}(\tau,z)} 
-
\frac{\theta_{-\frac12-2r,m+3}(\tau,z)}{\theta_{\frac12,1}(\tau,z)} \Bigg]
$$
\end{enumerate}
\end{enumerate}
\end{note}

Then the multiplication of theta functions with functions in 
$U^{[m, s]}$ is obtained as follows:

\begin{lemma} \quad 
\label{lemma:2022-708a}
\begin{enumerate}
\item[{\rm 1)}] \quad $\theta_{0,1} \cdot U^{[m, \frac12]}
\,\ \subset \,\ U^{[m+1, \frac12]}$
\item[{\rm 2)}]
\begin{enumerate}
\item[{\rm (i)}] \quad $\theta_{1,1} \cdot U^{[m, \frac12]}
\,\ \subset \,\ U^{[m+1, 0]}$ \quad {\rm if} \quad 
$m \, \in \, \nnn_{\rm even}$
\item[{\rm (ii)}] \quad $\theta_{1,1} \cdot U^{[m, 0]}
\,\ \subset \,\ U^{[m+1, \frac12]}$ \quad {\rm if} \quad 
$m \, \in \, \nnn_{\rm odd}$
\end{enumerate}
\item[{\rm 3)}]
\begin{enumerate}
\item[{\rm (i)}] \quad $\theta_{j,2} \cdot U^{[m, \frac12]}
\,\ \subset \,\ U^{[m+2, \frac12]}$ \quad $(j \in \{0,2\})$
\item[{\rm (ii)}] \quad $\theta_{j,2} \cdot U^{[m, 0]}
\,\ \subset \,\ U^{[m+2, 0]}$ \quad $(j \in \{0,2\})$ 
\quad {\rm if} \quad 
$m \, \in \, \nnn_{\rm odd}$
\end{enumerate}
\item[{\rm 4)}]
\begin{enumerate}
\item[{\rm (i)}] \quad $\vartheta_{10} \cdot U^{[m, \frac12]}
\,\ \subset \,\ U^{[m+2, 0]}$ \quad {\rm if} \quad 
$m \, \in \, \nnn_{\rm odd}$
\item[{\rm (ii)}] \quad $\vartheta_{10} \cdot U^{[m, 0]}
\,\ \subset \,\ U^{[m+2, \frac12]}$ \quad {\rm if} \quad 
$m \, \in \, \nnn_{\rm odd}$
\end{enumerate}
\item[{\rm 5)}]
\begin{enumerate}
\item[{\rm (i)}] \quad $\vartheta_{10} \, \vartheta_{0b} 
\cdot U^{[m, \frac12]}
\,\ \subset \,\ U^{[m+4, 0]}$ \,\ $(b\in \{0,1\})$ \,\ 
\quad {\rm if} \quad $m \, \in \, \nnn_{\rm odd}$
\item[{\rm (ii)}] \quad $\vartheta_{10} \, \vartheta_{0b} 
\cdot U^{[m, 0]}
\,\ \subset \,\ U^{[m+4, \frac12]}$ \,\ $(b\in \{0,1\})$ \,\ 
\quad {\rm if} \quad $m \, \in \, \nnn_{\rm odd}$
\end{enumerate}
\end{enumerate}
\end{lemma}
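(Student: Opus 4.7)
The plan is to verify each containment by checking it on the generators of $U^{[m,s]}$. By definition $U^{[m,\frac12]}$ is the $\CCC((q^{\frac12}))$-linear span of the ratio-type element $R^{[m,\frac12]} := \frac{\theta_{\frac12,m+1}}{\theta_{-\frac12,1}} - \frac{\theta_{-\frac12,m+1}}{\theta_{\frac12,1}}$ together with the antisymmetric combinations $\theta_{k,m} - \theta_{-k,m}$ for odd $k$ with $1 \leq k \leq m-1$; similarly $U^{[m,0]}$ is spanned by $R^{[m,0]} := \frac{\theta_{m+\frac12,m+1}}{\theta_{-\frac12,1}} - \frac{\theta_{-(m+\frac12),m+1}}{\theta_{\frac12,1}}$ together with $\theta_{k,m} - \theta_{-k,m}$ for even $k$. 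When $m \in \nnn_{\rm odd}$, Lemma \ref{n3:lemma:2022-707a} lets me replace $R^{[m,0]}$ by $\frac{\theta_{-\frac12,m+1}}{\theta_{-\frac12,1}} - \frac{\theta_{\frac12,m+1}}{\theta_{\frac12,1}}$, which is precisely the form appearing on the left-hand sides of the (ii)-parts of Note \ref{note:2022-707a}.

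First I would dispatch the antisymmetric generators. For any $\theta_{j,n}$ with $n \in \{1,2\}$, Note \ref{n3:note:2022-708a} gives $\theta_{j,n} \cdot [\theta_{k,m} - \theta_{-k,m}] \in U^{[m+n, \, s+\frac{j}{2}]}$, so the parity of the target $s$-index is determined by the parities of $j$ and $k$. A routine tabulation against the parity convention in the definitions of $U^{[m+n,\frac12]}$ and $U^{[m+n,0]}$ delivers the desired containment on this part of the generating set in items 1), 2), 3), and 4). For the items involving $\vartheta_{10} = \theta_{1,2} + \theta_{-1,2}$, both summands shift $s$ by $\frac12$, so the two applications of Note \ref{n3:note:2022-708a} land in the same target space as claimed.

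Next I would dispatch the ratio-type generators. For each of the operators $\theta_{0,1}$, $\theta_{1,1}$, $\theta_{0,2}$, $\theta_{2,2}$, and $\vartheta_{10}$, Note \ref{note:2022-707a} writes the corresponding product with a ratio generator as a finite $\CCC[[q^{\frac12}]]$-linear combination of ratio generators at level $m+n$, and the underbraces in that note already record that each output lies in the claimed $U^{[m+n,s']}$. Reading this off case by case — using the substitution afforded by Lemma \ref{n3:lemma:2022-707a} whenever the input is $R^{[m,0]}$ with $m$ odd — together with the antisymmetric case of the previous paragraph, proves parts 1)–4) in full.

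Finally, part 5) follows by iteration. Writing $\vartheta_{0b} = \theta_{0,2} + (-1)^b \, \theta_{2,2}$ and using associativity, I would first apply part 4)(i) or (ii) to obtain $\vartheta_{10} \cdot U^{[m,s]} \subset U^{[m+2,s']}$ where $s'$ has the opposite parity to $s$, and then act by $\vartheta_{0b}$ using part 3)(i) or (ii). Since $m \in \nnn_{\rm odd}$ implies $m+2 \in \nnn_{\rm odd}$, the parity hypothesis of 3)(ii) is satisfied when $s' = 0$, while 3)(i) applies unconditionally when $s' = \frac12$. The only real obstacle across the whole argument is bookkeeping — matching each ratio generator to the appropriate (i)- or (ii)-formula in Note \ref{note:2022-707a} and tracking how the parities of $j$ and $k$ interact with the $s$-index of the target — but once the generators are aligned through Lemma \ref{n3:lemma:2022-707a}, every inclusion reduces to reading off the labeled right-hand sides in Notes \ref{n3:note:2022-708a} and \ref{note:2022-707a}.
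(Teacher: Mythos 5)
Your proposal is correct and follows essentially the same route as the paper: checking the containments on the two kinds of generators of $U^{[m,s]}$, using Note \ref{n3:note:2022-708a} for the differences $\theta_{k,m}-\theta_{-k,m}$ and Note \ref{note:2022-707a} (together with Lemma \ref{n3:lemma:2022-707a}, and implicitly Lemma \ref{lemma:2022-702a} to recognize the unlabeled right-hand sides as elements of the target spaces) for the ratio generators, then deducing 5) from 3), 4) and the decomposition \eqref{n3:eqn:2022-704b}. No gaps.
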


\begin{proof} The claims 1) $\sim$ 4) follow immediately from
Note \ref{n3:note:2022-708a} and Note \ref{note:2022-707a}, 
using \eqref{n3:eqn:2022-708a1} and Lemma \ref{lemma:2022-702a}
and Lemma \ref{n3:lemma:2022-707a}. \,\ 
5) follows from 3) and 4) and \eqref{n3:eqn:2022-704b}.
\end{proof}

From this Lemma \ref{lemma:2022-708a} and Proposition 
\ref{prop:2022-201b}, we have the following:

\begin{lemma} \,\
\label{n3:lemma:2022-706b}
Let $f \in U^{[m,s]}$, then 
\begin{enumerate}
\item[{\rm 1)}] 
\begin{enumerate}
\item[{\rm (i)}] \, ${\rm ch}^{(+)}_{H(\Lambda^{[K(1), 0]})} 
\cdot f \, \in \, U^{[m+1,\frac12]}$
\quad if \quad $m \in \nnn$ and $s =\frac12$.
\item[{\rm (ii)}] \, ${\rm ch}^{(+)}_{H(\Lambda^{[K(1), 1]})} 
\cdot f \, \in \, U^{[m+1,s+\frac12]}$
\quad if \quad $m+2s \in \nnn_{\rm odd}$.
\end{enumerate}
\item[{\rm 2)}] 
\begin{enumerate}
\item[{\rm (i)}] \, ${\rm ch}^{(+)}_{H(\Lambda^{[K(2), 0]})} 
\cdot f \, \in \, U^{[m+2,s]}$ \quad if \,\ 
$(m,s) \in \nnn \times \{\frac12\} \,\ or \,\ 
\nnn_{\rm odd}\times \{0\}$.
\item[{\rm (ii)}] \, ${\rm ch}^{(+)}_{H(\Lambda^{[K(2), 2]})} 
\cdot f \, \in \, U^{[m+2,s]}$ \quad if \,\ 
$(m,s) \in \nnn \times \{\frac12\} \,\ or \,\ 
\nnn_{\rm odd}\times \{0\}$.
\item[{\rm (iii)}] \, ${\rm ch}^{(+)}_{H(\Lambda^{[K(2), 1]})} 
\cdot f \, \in \, U^{[m+2,s+\frac12]}$ \quad if \quad 
$m \in \nnn_{\rm odd}$.
\end{enumerate}
\item[{\rm 3)}] 
\begin{enumerate}
\item[{\rm (i)}] \, ${\rm ch}^{(+)}_{H(\Lambda^{[K(4), 1]})} 
\cdot f \, \in \, U^{[m+4,s+\frac12]}$ \quad if \quad 
$m \in \nnn_{\rm odd}$.
\item[{\rm (ii)}] \, ${\rm ch}^{(+)}_{H(\Lambda^{[K(4), 3]})} 
\cdot f \, \in \, U^{[m+4,s+\frac12]}$ \quad if \quad 
$m \in \nnn_{\rm odd}$.
\end{enumerate}
\end{enumerate}
\end{lemma}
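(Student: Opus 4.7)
The plan is to reduce the statement directly to Lemma \ref{lemma:2022-708a} by substituting the explicit formulas from Proposition \ref{prop:2022-201b} for each character $\mathrm{ch}^{(+)}_{H(\Lambda^{[K(m'), m_2']})}$ with $m' \in \{1,2,4\}$. The key observation is that every such character is of the form (scalar in $\ccc((q^{\frac12}))$) $\times$ (one of the basic theta factors $\theta_{0,1}$, $\theta_{1,1}$, $\vartheta_{00}$, $\vartheta_{01}$, $\vartheta_{10}$, $\vartheta_{00}\vartheta_{10}$, $\vartheta_{01}\vartheta_{10}$), where the scalar is a quotient of $\eta(\tau/2), \eta(\tau), \eta(2\tau)$ (possibly with a factor of $i$). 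Since each space $U^{[m,s]}$ is by definition a $\ccc((q^{\frac12}))$-linear span, the $\eta$-scalars can be freely absorbed and only the action of the theta factors on $U^{[m,s]}$ matters.

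For part 1), I would use $\mathrm{ch}^{(+)}_{H(\Lambda^{[K(1),0]})} \propto \theta_{0,1}/\eta(\tau)$ together with Lemma \ref{lemma:2022-708a} 1), and $\mathrm{ch}^{(+)}_{H(\Lambda^{[K(1),1]})} \propto \theta_{1,1}/\eta(\tau)$ together with the two branches of Lemma \ref{lemma:2022-708a} 2). The parity condition $m+2s \in \nnn_{\rm odd}$ exactly packages the two cases: $(s=\frac12,\, m \in \nnn_{\rm even})$ corresponds to part 2)(i) and $(s=0,\, m \in \nnn_{\rm odd})$ corresponds to part 2)(ii), and in both cases the resulting space is $U^{[m+1,\, s+\frac12]}$.

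For part 2), the characters for $m_2 \in \{0,2\}$ are $\ccc((q^{\frac12}))$-combinations of $\vartheta_{00}$ and $\vartheta_{01}$, hence via \eqref{n3:eqn:2022-704b} of $\theta_{0,2}$ and $\theta_{2,2}$, and the claim follows from Lemma \ref{lemma:2022-708a} 3). For $m_2=1$, the character is proportional to $\vartheta_{10}$, so one invokes Lemma \ref{lemma:2022-708a} 4) (both subclauses, assembled via the parity of $m$). For part 3), the characters $\mathrm{ch}^{(+)}_{H(\Lambda^{[K(4),1]})}$ and $\mathrm{ch}^{(+)}_{H(\Lambda^{[K(4),3]})}$ are $\ccc((q^{\frac12}))$-combinations of $\vartheta_{01}\vartheta_{10}$ and $\vartheta_{00}\vartheta_{10}$, which are handled directly by Lemma \ref{lemma:2022-708a} 5) under the hypothesis $m \in \nnn_{\rm odd}$.

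There is no real obstacle beyond bookkeeping: one must check that the $\eta$-type scalars attached to each character are genuinely in $\ccc((q^{\frac12}))$ (they are, being $q^{b}$ times units in $\ccc[[q^{\frac12}]]$), and one must track the parity of $m$ and the value of $s$ through each application of Lemma \ref{lemma:2022-708a}. The matching of parities in 1)(ii) and the restriction to $m \in \nnn_{\rm odd}$ in 2)(iii) and in part 3) arise precisely because those cases invoke the branches of Lemma \ref{lemma:2022-708a} that require odd $m$; this is the only delicate step but is mechanical.
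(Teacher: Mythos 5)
Your proposal is correct and follows essentially the same route as the paper, which likewise obtains this lemma directly from Proposition \ref{prop:2022-201b} (expressing each of the $K(1)$, $K(2)$, $K(4)$ characters as $\ccc((q^{\frac12}))$-combinations of $\theta_{0,1}$, $\theta_{1,1}$, $\vartheta_{00}$, $\vartheta_{01}$, $\vartheta_{10}$, $\vartheta_{00}\vartheta_{10}$, $\vartheta_{01}\vartheta_{10}$) combined with Lemma \ref{lemma:2022-708a}. The absorption of the $\eta$-quotients into the $\ccc((q^{\frac12}))$-coefficients and the parity bookkeeping you describe are exactly what the paper's one-line derivation relies on.
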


In order to translate these properties to $V^{[m,s]}$, we extend the range 
of the parameter $s$ to $s \in \frac12 \zzz$ by putting 
$$
V^{[m,s]} \,\ := \,\ \left\{
\begin{array}{lcl}
V^{[m,\frac12]} & & {\rm if} \quad s \in \frac12+\zzz \\[1mm]
V^{[m,0]} & & {\rm if} \quad s \in \zzz
\end{array}\right.
$$
Then, by Lemma \ref{n3:lemma:2022-706a} and 
Lemma \ref{n3:lemma:2022-706b}, we obtain the following:

\begin{lemma} \,\
\label{n3:lemma:2022-706c}
Let $f \in V^{[m,s]}$, then 
\begin{enumerate}
\item[{\rm 1)}] 
\begin{enumerate}
\item[{\rm (i)}] \, ${\rm ch}^{(+)}_{H(\Lambda^{[K(1), 0]})} 
\cdot f \, \in \, V^{[m+1,\frac12]}$
\quad if \quad $m \in \nnn$ and $s =\frac12$.
\item[{\rm (ii)}] \, ${\rm ch}^{(+)}_{H(\Lambda^{[K(1), 1]})} 
\cdot f \, \in \, V^{[m+1,s+\frac12]}$
\quad if \quad $m+2s \in \nnn_{\rm odd}$.
\end{enumerate}
\item[{\rm 2)}] 
\begin{enumerate}
\item[{\rm (i)}] \, ${\rm ch}^{(+)}_{H(\Lambda^{[K(2), 0]})} 
\cdot f \, \in \, V^{[m+2,s]}$ \quad if \,\ 
$(m,s) \in \nnn \times \{\frac12\} \,\ or \,\ 
\nnn_{\rm odd}\times \{0\}$.
\item[{\rm (ii)}] \, ${\rm ch}^{(+)}_{H(\Lambda^{[K(2), 2]})} 
\cdot f \, \in \, V^{[m+2,s]}$ \quad if \,\ 
$(m,s) \in \nnn \times \{\frac12\} \,\ or \,\ 
\nnn_{\rm odd}\times \{0\}$.
\item[{\rm (iii)}] \, ${\rm ch}^{(+)}_{H(\Lambda^{[K(2), 1]})} 
\cdot f \, \in \, V^{[m+2,s+\frac12]}$ \quad if \quad 
$m \in \nnn_{\rm odd}$.
\end{enumerate}
\item[{\rm 3)}] 
\begin{enumerate}
\item[{\rm (i)}] \, ${\rm ch}^{(+)}_{H(\Lambda^{[K(4), 1]})} 
\cdot f \, \in \, V^{[m+4,s+\frac12]}$ \quad if \quad 
$m \in \nnn_{\rm odd}$.
\item[{\rm (ii)}] \, ${\rm ch}^{(+)}_{H(\Lambda^{[K(4), 3]})} 
\cdot f \, \in \, V^{[m+4,s+\frac12]}$ \quad if \quad 
$m \in \nnn_{\rm odd}$.
\end{enumerate}
\end{enumerate}
\end{lemma}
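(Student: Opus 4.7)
The plan is to reduce everything to the already-established multiplication properties for the $U$-spaces in Lemma \ref{n3:lemma:2022-706b} by invoking the identifications
$$V^{[m,\tfrac12]} = U^{[m,\tfrac12]} \quad (m \in \nnn), \qquad V^{[m,0]} = U^{[m,0]} \quad (m \in \nnn_{\rm odd})$$
supplied by Lemma \ref{n3:lemma:2022-706a}. Since the extended definition declares $V^{[m,s]}$ to depend on $s$ only through the parity class of $2s$, every claim of Lemma \ref{n3:lemma:2022-706c} has the form: $f \in V^{[m,s]}$ is either of type $(m,\tfrac12)$ with $m$ arbitrary, or of type $(m,0)$ with $m$ odd. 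In either case $f \in U^{[m,s]}$ by Lemma \ref{n3:lemma:2022-706a}.

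First I would go through the items of Lemma \ref{n3:lemma:2022-706b} and apply them to $f$: this produces an element of some $U^{[m+n,s']}$. The entire argument is then a parity bookkeeping to check that this target $U$-space coincides with the target $V$-space claimed in Lemma \ref{n3:lemma:2022-706c}. Concretely, I would verify case-by-case:
\begin{enumerate}
\item[1)(i)] Here $s=\tfrac12$, so $m+1$ is arbitrary and $s$ stays at $\tfrac12$; the target is $V^{[m+1,\tfrac12]}=U^{[m+1,\tfrac12]}$, matching 1)(i) of Lemma \ref{n3:lemma:2022-706b}.
\item[1)(ii)] The hypothesis $m+2s \in \nnn_{\rm odd}$ splits into ($m$ odd, $s \in \zzz$) or ($m$ even, $s \in \tfrac12+\zzz$); in both situations $m+1$ has the correct parity so that the target $V^{[m+1,s+\frac12]}$ is one of the two $U$-spaces available via Lemma \ref{n3:lemma:2022-706a}, and matches the $U$-statement of 1)(ii) of Lemma \ref{n3:lemma:2022-706b}.
\item[2), 3)] Analogously, using the fact that $m$ odd in 2)(iii), 3)(i), 3)(ii) guarantees $m+2, m+4 \in \nnn_{\rm odd}$, so every $V^{[m',0]}$ that appears really does equal $U^{[m',0]}$. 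For 2)(i)--(ii) in the $\nnn \times \{\tfrac12\}$ case, both source and target live in $V^{[\cdot,\tfrac12]}=U^{[\cdot,\tfrac12]}$; in the $\nnn_{\rm odd}\times\{0\}$ case the shift $m \mapsto m+2$ preserves oddness, keeping us in the range where $V^{[\cdot,0]}=U^{[\cdot,0]}$.
\end{enumerate}

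The only real content beyond bookkeeping is noticing that the parity hypotheses in each clause of Lemma \ref{n3:lemma:2022-706c} are \emph{exactly} what is needed to ensure both sides of the multiplication lie in a $(m,s)$-regime where $V$ equals $U$; the statements are set up so that the parity conditions in 2)(iii), 3)(i), 3)(ii) force $m \in \nnn_{\rm odd}$ precisely because the $V^{[m+2,0]}$ or $V^{[m+4,0]}$ appearing in the conclusion is only known to coincide with the corresponding $U$-space for odd indices. The main (and essentially only) obstacle is therefore to track these parity shifts carefully; no further computation is needed, since the substantive multiplicative closure has already been proved for the $U$-spaces in Lemma \ref{n3:lemma:2022-706b}.
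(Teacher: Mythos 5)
Your proposal is correct and is exactly the paper's argument: the paper derives this lemma directly from Lemma \ref{n3:lemma:2022-706a} (the identifications $V^{[m,\frac12]}=U^{[m,\frac12]}$ for $m\in\nnn$ and $V^{[m,0]}=U^{[m,0]}$ for $m\in\nnn_{\rm odd}$) together with Lemma \ref{n3:lemma:2022-706b}, with the parity hypotheses ensuring both source and target spaces lie in the regime where $V=U$. Your case-by-case parity bookkeeping checks out and matches what the paper leaves implicit.
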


Now we consider the $\ccc((q^{\frac12}))$-space of N=3 characters:
{\allowdisplaybreaks
\begin{eqnarray*}
\overset{N=3}{CH}{}^{(+)[K(m),j]} 
&=&
\ccc((q^{\frac12}))\text{-linear span of} \,\ \left\{ 
{\rm ch}^{(+)}_{H(\Lambda^{[K(m), m_2]})}(\tau,z)
\,\ ; \,\ 
\begin{array}{l}
m_2 \in j+2\zzz \\[1mm]
0 \leq m_2 \leq m
\end{array} \right\} 
\\[2mm]
&:=& 
\left\{
\frac{f}{\overset{N=3}{R}{}^{(+)}} \quad ; \quad 
f \in V^{[m,\frac{j+1}{2}]} \right\}
\end{eqnarray*}}
for $m \in \nnn$ and $j \in \zzz$. Note that 
$$
\overset{N=3}{CH}{}^{(+)[K(m),j]} \,\ = \,\ \left\{
\begin{array}{lcl}
\overset{N=3}{CH}{}^{(+)[m,0]} & & {\rm if} \quad j \in \zzz_{\rm even} \\[1mm]
\overset{N=3}{CH}{}^{(+)[m,1]} & & {\rm if} \quad j \in \zzz_{\rm odd}
\end{array}\right.
$$

Then, in terms of $\overset{N=3}{CH}{}^{(+)[K(m),j]}$, 
Lemma \ref{n3:lemma:2022-706c} is restated as follows:

\begin{prop} \,\
\label{n3:prop:2022-706a}
Let $f \in \overset{N=3}{CH}{}^{(+)[K(m),j]}$, then 
\begin{enumerate}
\item[{\rm 1)}] 
\begin{enumerate}
\item[{\rm (i)}] \, ${\rm ch}^{(+)}_{H(\Lambda^{[K(1), 0]})} 
\cdot f \, \in \, \overset{N=3}{CH}{}^{(+)[K(m+1), \, 0]}$
\quad if \quad $m \in \nnn$ and $j=0$
\item[{\rm (ii)}] \, ${\rm ch}^{(+)}_{H(\Lambda^{[K(1), 1]})} 
\cdot f \, \in \, \overset{N=3}{CH}{}^{(+)[K(m+1), \, j+1]}$
\quad if \quad $m+j \in \zzz_{\rm even}$
\end{enumerate}
\item[{\rm 2)}] 
\begin{enumerate}
\item[{\rm (i)}] \, ${\rm ch}^{(+)}_{H(\Lambda^{[K(2), 0]})} 
\cdot f \, \in \, \overset{N=3}{CH}{}^{(+)[K(m+2), \, j]}$ \quad if \,\ 
$(m,j) \in \nnn \times \{0\} \,\ or \,\ 
\nnn_{\rm odd}\times \{1\}$.
\item[{\rm (ii)}] \, ${\rm ch}^{(+)}_{H(\Lambda^{[K(2), 2]})} 
\cdot f \, \in \, \overset{N=3}{CH}{}^{(+)[K(m+2), \, j]}$ \quad if \,\ 
$(m,j) \in \nnn \times \{0\} \,\ or \,\ 
\nnn_{\rm odd}\times \{1\}$.
\item[{\rm (iii)}] \, ${\rm ch}^{(+)}_{H(\Lambda^{[K(2), 1]})} 
\cdot f \, \in \, \overset{N=3}{CH}{}^{(+)[K(m+2), \, j+1]}$ \quad if \quad 
$m \in \nnn_{\rm odd}$.
\end{enumerate}
\item[{\rm 3)}] 
\begin{enumerate}
\item[{\rm (i)}] \, ${\rm ch}^{(+)}_{H(\Lambda^{[K(4), 1]})} 
\cdot f \, \in \, \overset{N=3}{CH}{}^{(+)[K(m+4), \, j+1]}$ \quad if \quad 
$m \in \nnn_{\rm odd}$.
\item[{\rm (ii)}] \, ${\rm ch}^{(+)}_{H(\Lambda^{[K(4), 3]})} 
\cdot f \, \in \, \overset{N=3}{CH}{}^{(+)[K(m+4), \, j+1]}$ \quad if \quad 
$m \in \nnn_{\rm odd}$.
\end{enumerate}
\end{enumerate}
\end{prop}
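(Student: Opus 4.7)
The plan is to observe that Proposition \ref{n3:prop:2022-706a} is simply a restatement of Lemma \ref{n3:lemma:2022-706c} via the definition of $\overset{N=3}{CH}{}^{(+)[K(m),j]}$, so all the real work has already been done. By definition, any $F \in \overset{N=3}{CH}{}^{(+)[K(m),j]}$ can be written as $F = f/\overset{N=3}{R}{}^{(+)}$ with $f \in V^{[m,(j+1)/2]}$. Multiplying by a character yields
\[
{\rm ch}^{(+)}_{H(\Lambda^{[K(m'),m_2']})} \cdot F \;=\; \frac{{\rm ch}^{(+)}_{H(\Lambda^{[K(m'),m_2']})} \cdot f}{\overset{N=3}{R}{}^{(+)}},
\]
so showing membership in $\overset{N=3}{CH}{}^{(+)[K(m+m'),j']}$ reduces to showing ${\rm ch}^{(+)}_{H(\Lambda^{[K(m'),m_2']})} \cdot f \in V^{[m+m',(j'+1)/2]}$, which is exactly the content of the corresponding clause of Lemma \ref{n3:lemma:2022-706c}.

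The remaining step is routine bookkeeping to match up the two indexing conventions: the half-integer $s$ parametrizing $V^{[m,s]}$ corresponds to the integer $j$ parametrizing $\overset{N=3}{CH}{}^{(+)[K(m),j]}$ via the identification $s = (j+1)/2$, equivalently $V^{[m,1/2]}$ goes with $j \in \zzz_{\rm even}$ and $V^{[m,0]}$ with $j \in \zzz_{\rm odd}$. Under this identification, the shift $s \mapsto s + \tfrac12$ becomes $j \mapsto j+1$, and the parity condition $m + 2s \in \nnn_{\rm odd}$ translates into $m + j \in \zzz_{\rm even}$. I would go through the eight cases 1)(i)--3)(ii) one by one, in each case reading off the hypotheses and conclusion of Lemma \ref{n3:lemma:2022-706c} and rewriting them in terms of $(m,j)$; for instance clause 2)(iii) requires $m \in \nnn_{\rm odd}$ and sends $V^{[m,s]}$ to $V^{[m+2,s+\frac12]}$, which in $\overset{N=3}{CH}{}^{(+)}$-language is exactly ${\rm ch}^{(+)}_{H(\Lambda^{[K(2),1]})} \cdot F \in \overset{N=3}{CH}{}^{(+)[K(m+2),j+1]}$ for $m \in \nnn_{\rm odd}$. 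Since no genuine new inclusion is being established, there is no real obstacle; the only point that requires care is not dropping or misidentifying one of the parity constraints when translating between $s$ and $j$.
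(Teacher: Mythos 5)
Your proposal is correct and matches the paper exactly: the paper offers no separate proof, introducing the proposition with the words that it is Lemma \ref{n3:lemma:2022-706c} ``restated'' in terms of $\overset{N=3}{CH}{}^{(+)[K(m),j]}$, which is precisely your reduction via $F=f/\overset{N=3}{R}{}^{(+)}$ with $f\in V^{[m,\frac{j+1}{2}]}$. Your bookkeeping of the dictionary $s=(j+1)/2$ (so $s\mapsto s+\tfrac12$ becomes $j\mapsto j+1$ and $m+2s\in\nnn_{\rm odd}$ becomes $m+j\in\zzz_{\rm even}$) is also accurate.
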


We expect the following:

\begin{conj} 
\label{n3:conj:2022-706a}
The following formula will hold for all $m,m' \in \nnn$ and $j, j' \in \zzz$:
\begin{subequations}
\begin{equation}
\overset{N=3}{CH}{}^{(+)[K(m),j]} \cdot \overset{N=3}{CH}{}^{(+)[K(m'),j']}
\,\ \subset \,\ 
\overset{N=3}{CH}{}^{(+)[K(m+m'), \, j+j']}
\label{n3:eqn:2022-706a}
\end{equation}
Namely, for $m,m' \in \nnn$ and $m_2, m_2' \in \zzz$ satisfying 
$0 \leq m_2 \leq m$ and $0 \leq m_2' \leq m'$, 
there will exist functions $b^{(m,m_2),(m',m_2')}_{m_2''} \in \ccc((q^{\frac12}))$ 
such that 
\begin{equation}
{\rm ch}^{(+)}_{H(\Lambda^{[K(m), m_2]})}(\tau, z) \cdot 
{\rm ch}^{(+)}_{H(\Lambda^{[K(m'), m_2']})}(\tau, z) 
= \hspace{-7mm}
\sum_{\substack{\\[0.5mm] m_2'' \, \in \, m_2+m_2'+2\zzz \\[1mm] 
0 \leq m_2'' \leq m+m'}} 
\hspace{-7mm}
b^{(m,m_2),(m',m_2')}_{m_2''}(\tau) \cdot
{\rm ch}^{(+)}_{H(\Lambda^{[K(m+m'), m_2'']})}(\tau, z) 
\label{n3:eqn:2022-706b}
\end{equation}
\end{subequations}
\end{conj}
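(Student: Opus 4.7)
The plan is to reduce Conjecture \ref{n3:conj:2022-706a} to a multiplicative closure statement on the explicit theta-function spaces $U^{[m,s]}$ of Section \ref{sec:space:n3characters}, which can then be attacked using the classical multiplication formulas collected in Section \ref{sec:preliminaries}.

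The first step is to translate the conjecture using the defining isomorphism of $\overset{N=3}{CH}{}^{(+)[K(m),j]}$ together with Lemma \ref{n3:lemma:2022-706a}. Since each character space is $V^{[m,(j+1)/2]}$ divided by the common denominator $\overset{N=3}{R}{}^{(+)}$, and $V^{[m,s]} = U^{[m,s]}$ by Lemma \ref{n3:lemma:2022-706a}, the inclusion \eqref{n3:eqn:2022-706a} becomes the assertion that for every $f \in U^{[m,s]}$ and $g \in U^{[m',s']}$ the quotient $f \cdot g / \overset{N=3}{R}{}^{(+)}$ lies in $U^{[m+m',s+s']}$. Because $U^{[m,s]}$ is $\ccc((q^{1/2}))$-spanned by two explicit families of generators, namely the theta-differences $\theta_{k,m}-\theta_{-k,m}$ (with $k$-parity tied to $s$) and the quasi-elliptic quotients $\theta_{\pm\frac12,m+1}/\theta_{\mp\frac12,1}$ from \eqref{n3:eqn:2022-708a1}, \eqref{n3:eqn:2022-708a2} and Lemma \ref{n3:lemma:2022-707a}, it suffices to check closure for the three types of generator-generator products.

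Next, I would dispose of the two easier cases. The product of two theta-differences is controlled by Lemma \ref{n3:lemma:2022-701a}: expanding $\theta_{k,m}\theta_{k',m'}$ and collecting the antisymmetric part yields a $\ccc((q^{1/2}))$-combination of $\theta_{K,m+m'}-\theta_{-K,m+m'}$ with the correct $s+s'$ parity, which is essentially the calculation done in the proof of Note \ref{n3:note:2022-708a} generalized from $n\in\{1,2\}$ to arbitrary $n$. The product of a theta-difference with a quasi-elliptic quotient is handled by the same device applied to the numerator, and the identities in Note \ref{note:2022-707a} carry out the required bookkeeping verbatim; the output lands on a single quasi-elliptic quotient of $U^{[m+m',\,\cdot\,]}$.

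The main obstacle, and where the proof becomes genuinely new, is the third case: the product of two quasi-elliptic quotients. Their product carries a double denominator $\theta_{\frac12,1}(\tau,z)\,\theta_{-\frac12,1}(\tau,z)$, whereas a generator of $U^{[m+m',\,\cdot\,]}$ has only a single such denominator. To land in the target space, one of the two denominator factors must be absorbed against a matching factor implicit in $\overset{N=3}{R}{}^{(+)}$. Carrying this out requires a precise product identity for $\theta_{\frac12,1}(\tau,z)\,\theta_{-\frac12,1}(\tau,z)$ in terms of $\eta$-quotients, matched against the explicit form of $\overset{N=3}{R}{}^{(+)}$ from \cite{W2022a}; the hardest part will be keeping the parities of $s$ and $s'$ correctly combined into $s+s'$ alongside the half-integer shifts in the numerator thetas. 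Once this cancellation is established, $\ccc((q^{1/2}))$-linearity propagates the generator-level closure to arbitrary products, proving \eqref{n3:eqn:2022-706a}, and the decomposition \eqref{n3:eqn:2022-706b} then follows from the standard basis description of $\overset{N=3}{CH}{}^{(+)[K(m),j]}$ in terms of the characters ${\rm ch}^{(+)}_{H(\Lambda^{[K(m),m_2]})}$.
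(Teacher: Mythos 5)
There is a genuine gap here — in fact the statement you are addressing is stated in the paper as a \emph{conjecture}: the paper never proves \eqref{n3:eqn:2022-706a} in general, but only the special cases collected in Proposition \ref{n3:thm:2022-706a}, where one factor is a level $1$, $2$ or $4$ character whose explicit $\theta$/$\eta$ expression (Proposition \ref{prop:2022-201b}) reduces the product to multiplication of $U^{[m,s]}$ by single theta functions as in Lemma \ref{lemma:2022-708a}. So your proposal, if it worked, would be a new theorem going beyond the paper; unfortunately it does not work as outlined. The central defect is in the reduction. A product of two characters is $\frac{f}{\overset{N=3}{R}{}^{(+)}}\cdot\frac{g}{\overset{N=3}{R}{}^{(+)}}$ with $f\in U^{[m,s]}$, $g\in U^{[m',s']}$, so what must be shown is that $fg/\overset{N=3}{R}{}^{(+)}$ lies in $U^{[m+m',s+s']}$; the division by one copy of the denominator is required for \emph{every} pair of generators, not only for the product of two quasi-elliptic quotients. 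In your first two cases you manipulate $fg$ itself and never divide by $\overset{N=3}{R}{}^{(+)}$, and this is not a cosmetic omission: every generator of $U^{[m,s]}$ in \eqref{n3:eqn:2022-708a1}--\eqref{n3:eqn:2022-708a2} is an odd function of $z$ (since $\theta_{k,m}(\tau,-z)=\theta_{-k,m}(\tau,z)$), so the product of two generators is an \emph{even} function of $z$ and cannot equal a nonzero $\ccc((q^{\frac12}))$-combination of the odd generators of $U^{[m+m',s+s']}$. In particular your first case is false as stated: the product $\bigl[\theta_{k,m}-\theta_{-k,m}\bigr]\bigl[\theta_{k',m'}-\theta_{-k',m'}\bigr]$ has vanishing antisymmetric part in $z$, so it is not a combination of differences $\theta_{K,m+m'}-\theta_{-K,m+m'}$; the computation in Note \ref{n3:note:2022-708a} that you invoke multiplies a difference by a single (even) theta function $\theta_{j,n}$, which is a different situation and does not generalize in the way you claim. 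The parity is only restored after dividing by the odd function $\overset{N=3}{R}{}^{(+)}$, and that division is a divisibility statement, not a theta-multiplication identity — it is precisely the content of the conjecture.

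The second defect is that the one place where you do confront the denominator (your ``main obstacle'', the product of two quasi-elliptic quotients) is left as a declaration of intent: the argument hinges on ``once this cancellation is established''. Since that cancellation — showing that $\overset{N=3}{R}{}^{(+)}$ divides the product of two numerators inside the ring you are working in, with the correct parity bookkeeping — is exactly the open part of the problem, the proposal is at best a plan whose decisive step is missing. If you want to make progress in the spirit of the paper, the viable route is the one the paper actually takes: express one factor of the product through Proposition \ref{prop:2022-201b} as an $\eta$-quotient times $\theta_{j,1}$, $\theta_{j,2}$ or $\vartheta_{10}\vartheta_{0b}$, and apply Lemma \ref{lemma:2022-708a} and Proposition \ref{n3:prop:2022-706a}; this is what yields the cases listed in Proposition \ref{n3:thm:2022-706a}, and extending it to general $m$ requires new identities for the higher-level characters that the paper does not yet have.
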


Then, by Propositions \ref{prop:2022-607a} and \ref{prop:2022-622a} 
and \ref{n3:prop:2022-706a}, we have the following:

\begin{prop} 
\label{n3:thm:2022-706a}
The formula \eqref{n3:eqn:2022-706a} in Conjecture \ref{n3:conj:2022-706a} 
is true in the following cases:
\begin{enumerate}
\item[{\rm 1)}] \,\ $(m,m_2)=(1,0)$ \,\ and \,\ 
$(m', m_2') \in \nnn \times \zzz_{\rm even}$
\item[{\rm 2)}] \,\ $(m,m_2)=(1,1)$ \,\ and \,\ 
$m'+ m_2' \in \zzz_{\rm even}$
\item[{\rm 3)}] \,\ $(m,m_2)=(2,0)$ or $(2,2)$
\,\ and \,\ $(m', m_2') \in \nnn \times \zzz_{\rm even}$
\item[{\rm 4)}] \,\ $(m,m_2)=(2,0)$ or $(2,2)$
\,\ and \,\ $(m', m_2') \in \nnn_{\rm odd} \times \zzz_{\rm odd}$
\item[{\rm 5)}] \,\ $(m,m_2)=(2,1)$ \,\ and \,\ 
$m' \in \nnn_{\rm odd}$
\item[{\rm 6)}] \,\ $(m,m_2)=(4,1)$ or $(4,3)$ \,\ and \,\ 
$m' \in \nnn_{\rm odd}$
\item[{\rm 7)}] \,\ $m=m'=1$
\item[{\rm 8)}] \,\ $m=m'=2$ \,\ and \,\ $m_2+m_2' \in \zzz_{\rm odd}$
\end{enumerate}
\end{prop}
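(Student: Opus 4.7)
The plan is to reduce every one of the eight cases to results already established in the paper, so that no new calculation is needed at this stage. Cases 1)--6) will all follow by taking $f = {\rm ch}^{(+)}_{H(\Lambda^{[K(m'), m_2']})}(\tau,z)$ in Proposition \ref{n3:prop:2022-706a} and matching the parity hypotheses; cases 7) and 8) are precisely the content of Propositions \ref{prop:2022-607a} and \ref{prop:2022-622a} respectively.

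First I would set $f := {\rm ch}^{(+)}_{H(\Lambda^{[K(m'), m_2']})}(\tau,z)$, which by definition lies in $\overset{N=3}{CH}{}^{(+)[K(m'),j']}$ where $j' \equiv m_2' \pmod 2$. For case 1), multiplication by ${\rm ch}^{(+)}_{H(\Lambda^{[K(1), 0]})}$ lands in $\overset{N=3}{CH}{}^{(+)[K(m'+1), 0]}$ by Proposition \ref{n3:prop:2022-706a} 1)(i) as soon as $j'=0$, which is the hypothesis $m_2' \in \zzz_{\rm even}$; since $j+j' = 0+0 = 0$, this is the target space. For case 2), Proposition \ref{n3:prop:2022-706a} 1)(ii) requires $m'+j' \in \zzz_{\rm even}$, which is exactly $m' + m_2' \in \zzz_{\rm even}$. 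For cases 3) and 4), Proposition \ref{n3:prop:2022-706a} 2)(i)--(ii) requires either $(m',j') \in \nnn \times \{0\}$ or $(m',j') \in \nnn_{\rm odd} \times \{1\}$, which matches the two parity configurations stated. For case 5) (resp.\ case 6)), Proposition \ref{n3:prop:2022-706a} 2)(iii) (resp.\ 3)(i)--(ii)) requires $m' \in \nnn_{\rm odd}$, exactly the stated hypothesis, and the shift $j \mapsto j+1$ gives the correct target index $j+j'$.

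For case 7) I would simply invoke Proposition \ref{prop:2022-607a}: all three products ${\rm ch}^{(+)}_{H(\Lambda^{[K(1), m_2]})} \cdot {\rm ch}^{(+)}_{H(\Lambda^{[K(1), m_2']})}$ with $m_2, m_2' \in \{0,1\}$ are explicitly expressed there as $\ccc((q^{\frac12}))$-combinations of characters $\{{\rm ch}^{(+)}_{H(\Lambda^{[K(2), m_2'']})}\}$ with $m_2'' \equiv m_2 + m_2' \pmod 2$, which is precisely the statement \eqref{n3:eqn:2022-706b} with $(m,m') = (1,1)$. Case 8) is handled by Proposition \ref{prop:2022-622a}, which expands ${\rm ch}^{(+)}_{H(\Lambda^{[K(2), 1]})} \cdot {\rm ch}^{(+)}_{H(\Lambda^{[K(2), m_2']})}$ ($m_2' \in \{0,2\}$) in terms of ${\rm ch}^{(+)}_{H(\Lambda^{[K(4), 1]})}$ and ${\rm ch}^{(+)}_{H(\Lambda^{[K(4), 3]})}$; since $m_2+m_2' \in \zzz_{\rm odd}$ forces $\{m_2,m_2'\} = \{1, \text{even}\}$, this covers exactly the stated hypothesis in case 8).

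There is no real obstacle to overcome; the proof is a bookkeeping exercise. The only point that deserves care is the parity accounting in the target index $j+j'$: one must verify in each of the six subcases 1)--6) that the parity shift produced by Proposition \ref{n3:prop:2022-706a} (either no shift, or $j \mapsto j+1$) matches $j+j' \pmod 2$, given the parities of $m, m_2, m', m_2'$ in the hypothesis. I would organize the write-up as eight short items, each a single line citing the relevant earlier result with the parameter substitution made explicit.
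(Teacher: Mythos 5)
Your proposal is correct and follows exactly the route the paper takes: the paper derives this proposition directly from Propositions \ref{prop:2022-607a}, \ref{prop:2022-622a} and \ref{n3:prop:2022-706a}, with cases 1)--6) being instances of Proposition \ref{n3:prop:2022-706a} applied to $f = {\rm ch}^{(+)}_{H(\Lambda^{[K(m'),m_2']})}$ and cases 7)--8) being the explicit computations of Propositions \ref{prop:2022-607a} and \ref{prop:2022-622a}. Your parity bookkeeping in each case matches the hypotheses of the cited results, so nothing is missing.
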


\end{document}